\theoremstyle{plain}
\newtheorem{thm}{Theorem}[section]
\newtheorem{cor}[thm]{Corollary}
\newtheorem{lem}[thm]{Lemma}
\newtheorem{prop}[thm]{Proposition}
\newtheorem*{thm*}{Theorem}
\theoremstyle{definition}
\newtheorem{defn}[thm]{Definition}
\theoremstyle{remark}
\newtheorem{rem}[thm]{Remark}
\numberwithin{equation}{section}
\newcommand{\NN}{\mathbb{N}} 
\newcommand{\RR}{\mathbb{R}} 
\newcommand{\CC}{\mathbb{C}} 
\newcommand{\Rd}{\mathbb{R}^{d}} 
\newcommand{\id}{\mathrm{id}}
\newcommand{\Vect}{\mathrm{Vect}} 
\newcommand{\HMd}[1]{H^{#1}(M,\mathbb{R}^{d})} 
\newcommand{\HR}[1]{H^{#1}(\mathbb{R}^{d},\mathbb{R})} 
\newcommand{\HRd}[1]{H^{#1}(\mathbb{R}^{d},\mathbb{R}^{d})} 
\newcommand{\CScR}{C_{c}^{\infty}(\mathbb{R}^{d},\mathbb{R})} 
\newcommand{\ScR}{\mathcal{S}(\mathbb{R}^{d})} 
\newcommand{\OPS}[1]{\Psi^{#1}}
\newcommand{\Diff}{\mathrm{Diff}}
\newcommand{\DiffM}{\mathrm{Diff}^{\infty}(M)} 
\newcommand{\SDiffM}{\mathrm{SDiff}_{\mu}^{\infty}(M)} 
\newcommand{\DiffRd}{\mathrm{Diff}_{H^{\infty}}(\mathbb{R}^{d})} 
\newcommand{\DiffTd}{\mathrm{Diff}^{\infty}(\mathbb{T}^{d})} 
\newcommand{\DM}[1]{\mathcal{D}^{#1}(M)} 
\newcommand{\DRd}[1]{\mathcal{D}^{#1}(\mathbb{R}^{d})} 
\newcommand{\norm}[1]{\left\Vert#1\right\Vert}
\newcommand{\abs}[1]{\left\vert#1\right\vert}
\newcommand{\set}[1]{\left\{#1\right\}}
\DeclareMathOperator{\ad}{ad} %
\DeclareMathOperator{\Op}{Op} %
\DeclareMathOperator{\Rec}{Rec} %
\DeclareMathOperator{\dive}{div} %
\DeclareMathOperator{\pr}{pr} %
\begin{document}

\title[EPDiff equation with pseudo-differential inertia operator]{Well-posedness of the EPDiff equation with a pseudo-differential inertia operator}

\author{M. Bauer}
\thanks{M. Bauer was partially supported by NSF-grant 1912037 (collaborative research in connection with NSF-grant 1912030) and E. Cismas was partially supported by CNCS UEFISCDI, project number PN-III-P4-ID-PCE-2016-0778.}
\address{Department of Mathematics, Florida State University, 32301 Tallahassee, USA}
\email{bauer@math.fsu.edu}

\author{M. Bruveris}
\address{Onfido, 3~Finsbury Avenue, London EC2M 2PA, UK}
\email{martins.bruveris@gmail.com}

\author{E. Cismas}
\address{"Politehnica" University of Timisoara, Piata Victoriei Nr. 2, 300006 Timisoara}
\email{emanuel.cismas@upt.ro}

\author{J. Escher}
\address{Institute for Applied Mathematics, University of Hanover, D-30167 Hanover, Germany}
\email{escher@ifam.uni-hannover.de}

\author{B. Kolev}
\address{LMT (ENS Paris-Saclay, CNRS, Université Paris-Saclay), F-94235 Cachan Cedex, France}
\email{boris.kolev@math.cnrs.fr}

\subjclass[2010]{58D05, 35Q35}
\keywords{EPDiff equation; diffeomorphism groups; Sobolev metrics of fractional order} %

\date{\today}

\begin{abstract}
  In this article we study the class of right-invariant, fractional order Sobolev-type metrics on groups of diffeomorphisms of a compact manifold $M$. Our main result concerns well-posedness properties for the corresponding Euler-Arnold equations, also called the EPDiff equations, which are of importance in mathematical physics and in the field of shape analysis and template registration. Depending on the order of the metric, we will prove both local and global well-posedness results for these equations. As a result of our analysis we will also obtain new commutator estimates for elliptic pseudo-differential operators.
\end{abstract}

\maketitle

\section{Introduction}
\label{sec:introduction}

Our goal in this article is to study the well-posedness of the EPDiff equation on the group of diffeomorphisms of a compact manifold $M$ for the $H^{s}$-metric when $s$ is no longer an integer. Our main result is the following.

\begin{thm*}[Local and Global Well-posedness]
  Let $M$ be a closed manifold of dimension $d$. The EPDiff equation and the geodesic equation for the fractional, right invariant $H^{s}$-metric on the diffeomorphism group $\DiffM$ are locally well-posed, provided $s \ge 1/2$. For $s>\frac{d}2+1$ all solutions exist for all time $t$, i.e., the equations are globally well-posed.
\end{thm*}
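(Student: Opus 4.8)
The plan is to realize the geodesic flow as the flow of a smooth vector field --- the geodesic spray --- on the tangent bundle of a Hilbert manifold, and then to invoke the Picard--Lindel\"of theorem together with an Ebin--Marsden type regularity argument. Fix an integer $q>d/2+1$ and let $\DM{q}$ be the Hilbert manifold of $H^{q}$ diffeomorphisms of $M$; it is a smooth Hilbert manifold and a topological group, and the right translations $R_{\varphi}$ are smooth. The right-invariant $H^{s}$-metric, determined by the elliptic inertia operator $A\in\OPS{2s}$, then extends to a smooth (weak) Riemannian metric on $\DM{q}$, and the geodesic equation in Lagrangian coordinates takes the form $\ddot\varphi=\Gamma_{\varphi}(\dot\varphi,\dot\varphi)$ for a Christoffel map $\Gamma_{\varphi}$ built from $A$, $A^{-1}$ and the (co)adjoint operators conjugated by $\varphi$. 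The whole problem therefore reduces to showing that $\varphi\mapsto\Gamma_{\varphi}$ --- equivalently, that $\varphi\mapsto A_{\varphi}:=R_{\varphi}\circ A\circ R_{\varphi^{-1}}$ and $\varphi\mapsto A_{\varphi}^{-1}$ --- extend to smooth maps on $\DM{q}$ with values in the appropriate spaces of bounded operators.

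This last point is the heart of the matter and the step I expect to be the main obstacle. For integer $s$ one can expand $A$ in local coordinates and conjugate term by term, but for fractional $s$ one must argue directly with the pseudo-differential calculus: $A_{\varphi}$ is again an elliptic operator in $\OPS{2s}$, now with a symbol depending on $\varphi$, and one must control how many derivatives of $\varphi$ enter. The decisive tools are the new commutator estimates for elliptic pseudo-differential operators announced in the abstract, which bound $[A,\,\cdot\,]$-type expressions and show that the apparent loss of derivatives produced by conjugation by $\varphi$ is exactly recovered by the smoothing of $A^{-1}$ precisely when $2s\ge 1$, i.e. $s\ge 1/2$. Granting this, the spray $S(\varphi,v)=(v,\Gamma_{\varphi}(v,v))$ is a smooth vector field on $T\DM{q}$, and Picard--Lindel\"of on $T\DM{q}$ produces, for every $u_{0}\in H^{q}(M,TM)$, a unique non-extendable geodesic $\varphi\in C^{\infty}\big((-T_{q},T_{q}),\DM{q}\big)$ depending smoothly on $u_{0}$.

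Next I would pass to $\DiffM$ by a no loss--no gain argument. Since the spray at level $q+1$ is the restriction of the spray at level $q$, and the inclusion $\DM{q+1}\hookrightarrow\DM{q}$ is continuous, uniqueness forces the geodesics at the two levels to agree on their common interval; the crucial point is that $T_{q}$ does not decrease as $q\to\infty$, so an initial velocity in $\bigcap_{q}H^{q}(M,TM)=\CS(M,TM)$ yields a geodesic in $\DiffM$ on a fixed time interval, still depending smoothly on the data. Reverting to Eulerian variables via $u=\dot\varphi\circ\varphi^{-1}$ and the momentum $m=Au$ then gives local well-posedness of the EPDiff equation and of the geodesic equation themselves.

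For $s>d/2+1$, global existence follows from conservation of energy. Along a geodesic the quantity $\langle Au(t),u(t)\rangle=\norm{u(t)}_{H^{s}}^{2}$ is constant --- equivalently, $m=Au$ satisfies the transport-type conservation law inherent in EPDiff --- so $\norm{u(t)}_{H^{s}}$ stays bounded on the maximal interval, and since $s>d/2+1$ the Sobolev embedding $H^{s}\hookrightarrow C^{1}$ gives a uniform bound on $\norm{u(t)}_{C^{1}}$. A Gronwall estimate applied to $\partial_{t}\varphi=u\circ\varphi$ (and to $\varphi^{-1}$) then keeps $\varphi(t)$ in a bounded subset of $\DM{q}$, so the continuation criterion --- a geodesic extends as long as $\int_{0}^{t}\norm{u(\tau)}_{C^{1}}\,d\tau<\infty$ --- excludes finite-time blow-up. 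Hence $T_{q}=\infty$ for every $q>d/2+1$, and the no loss--no gain argument upgrades this to global well-posedness on $\DiffM$.
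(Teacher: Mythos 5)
Your treatment of the \emph{local} well-posedness part follows the paper's route almost exactly: the heart of the matter is indeed the smoothness of $\varphi\mapsto A_{\varphi}$ on $\DM{q}$ (Theorems~\ref{thm:smoothness-of-conjugates}, \ref{thm:conj-functions} and \ref{thm:tildeA-smoothness}, proved via the commutator estimates of Theorem~\ref{thm:boundedness-lemma} and a Whitney-embedding reduction to $\RR^{d_{0}}$), from which smoothness of the metric and of the spray, Picard--Lindel\"of on $T\DM{q}$, and the Ebin--Marsden no-loss--no-gain lemma give the statement for $s\ge 1/2$, i.e.\ $r=2s\ge 1$. One caveat of phrasing: for $\varphi\in\DM{q}$ of finite regularity, $A_{\varphi}$ is \emph{not} again an elliptic operator in $\OPS{2s}$ with a symbol depending on $\varphi$; what is actually proved is smoothness of $\varphi\mapsto A_{\varphi}$ into bounded operators, by bounding the iterated commutators $A_{n}$ of \eqref{eq:recurrence-relation} --- but this is exactly the programme you describe, so the local part is sound modulo the (admittedly substantial) estimates you defer.

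For \emph{global} well-posedness your route genuinely diverges from the paper's, and this is where the gap lies. The paper does not use energy conservation plus a continuation criterion: it writes $A=B^{2}$ with $B\in\mathcal{E}^{s}(M,TM)$, shows that the metric then extends to a \emph{strong} right-invariant metric on $\DM{s}$ (Theorem~\ref{thm:metric-smoothness}, item (2)), invokes the completeness theorem of Bruveris--Vialard for smooth strong right-invariant metrics, and then needs a refined no-loss--no-gain argument (Corollary~\ref{cor:globalexistence}) because the spray is only known to be smooth for $q=s$ and $q\ge 2s$, not for $q\in(s,2s)$ --- a point your ``$T_{q}$ does not decrease'' step silently assumes away. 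More seriously, your argument rests on the continuation criterion that a geodesic extends as long as $\int_{0}^{t}\norm{u(\tau)}_{C^{1}}\,d\tau<\infty$. That criterion is not a consequence of anything you have established: it requires an a priori estimate of Beale--Kato--Majda/Kato--Ponce type, roughly $\frac{d}{dt}\norm{u}_{H^{q}}^{2}\lesssim\norm{du}_{L^{\infty}}\norm{u}_{H^{q}}^{2}$, for the EPDiff equation with a general elliptic pseudo-differential inertia operator on a manifold; such estimates are a substantive piece of work (they are what makes the analogous global results on $\Circle$ and $\Rd$ nontrivial) and are not supplied by the smoothness-of-conjugation machinery. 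Conservation of $\norm{u}_{H^{s}}$ and the embedding $H^{s}\hookrightarrow C^{1}$ are correct, and Gronwall does control $\varphi$ and $\varphi^{-1}$ in $C^{1}$, but without the continuation criterion you cannot conclude that $\norm{u(t)}_{H^{q}}$ stays finite, so finite-time blow-up at level $q$ is not excluded. Either prove that a priori estimate, or follow the paper and reduce to the strong-metric completeness theorem.
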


Our result is proven under more general assumptions, namely for right invariant metrics that are defined using  abstract pseudo differential operators as inertia operator. This will allow us to apply the result to a wide class of situations, including in particular the fractional $H^s$-metric, but also other examples such as the so-called information metric as studied in~\cite{Mod2014}. Our result requires us to carefully investigate smoothness properties of conjugating pseudo-differential operators by diffeomorphisms. As a byproduct of our analysis, we obtain the following result which is of independent interest for the study of pseudo-differential operators and can be viewed as generalized, higher order Kato--Ponce type inequalities.

\begin{thm*}[Smooth Conjugation of Pseudo Differential Operators]
  Let $A$ be a pseudo-differential operator in $\OPS{r}(M)$ with $r \geq 1$. Then the map
  \begin{equation*}
    \DM{q} \to L(H^{q}(M), H^{q-r}(M))\,,\quad
    \varphi \mapsto A_{\varphi}
  \end{equation*}
  is smooth for $q \geq r$ and $q > d/2 + 1$.
\end{thm*}

The operator $A_{\varphi}$ is also called the {twisted map}, i.e., the inertia operator $A$ twisted by the right translation $R_{\varphi}$:
\begin{equation*}
  A_{\varphi} : = R_{\varphi}\circ A\circ R_{\varphi^{-1}},
\end{equation*}
with $R_{\varphi} v : = v \circ \varphi$ for $\varphi\in\DM{q}$ being an element in the group of diffeomorphisms of Sobolev order $q$ and $v\in \mathfrak X^q(M)$, see Section~\ref{sec:Sobdiff}. For the exact
definition of the operator class $\OPS{r}(M)$, we refer to Section~\ref{sec:pseudo-diffs} and to the survey of Agranovich in~\cite{ES1994}.

\subsection*{Context of the result}

In 1765, Euler published a seminal paper~\cite{Eul1765a} in which he recast the equations of motion of a free rigid body as the geodesic flow on the rotation group. For the bi-century of this achievement, Arnold has extended this geometric framework to hydrodynamics and recast the equations of motion of a perfect fluid (with fixed boundary) as the geodesic flow on the volume-preserving diffeomorphisms group of the domain. Since, then a similar geometric formulation has been found for several important PDEs in mathematical physics, including in particular the Camassa--Holm equation \cite{CH1993,Mis1998,Kou1999}, the modified Constantin--Lax--Majda equation~\cite{CLM1985,Wun2010,EKW2012,BKP2016} or the SQG-equation~\cite{CMT1994,Was2016,BHP2018}, see~\cite{TV2011,KW2009} for further examples and references.

From a geometrical view-point, this theory can be reduced to the study of right-invariant Riemannian metrics on the diffeomorphism group of a manifold $M$ (or one of its subgroup like $\SDiffM$, the group of diffeomorphism which preserve a volume form $\mu$). To define a right invariant Riemannian metric on the diffeomorphism group $\Diff(M)$ of a compact Riemannian manifold $M$, it suffices to prescribe an inner product on its Lie algebra $\Gamma(TM)$. We will moreover assume that this inner product can be written as
\begin{equation*}
  \langle u_{1},u_{2}\rangle : = \int_{M} \left( Au_{1}\cdot u_{2} \right) d\mu \,,
\end{equation*}
where $u_{1}, u_{2} \in \Gamma(TM)$, $\cdot$ means the Riemannian metric on $TM$, $d\mu$, the Riemannian density and the inertia operator
\begin{equation*}
  A : \Gamma(TM) \to \Gamma(TM)
\end{equation*}
is a $L^{2}$-symmetric, positive definite, continuous linear operator. By translating this inner product, we get an inner product on each tangent space $T_{\varphi}\Diff(M)$, which is given by
\begin{equation}\label{eq:definition-metric}
  G_{\varphi}(v_{1},v_{2}) = \int_{M} \left( A_{\varphi} v_{1} \cdot v_{2} \right) \varphi^{*} d\mu \,,
\end{equation}
where $v_{1},v_{2}\in T_{\varphi}\Diff(M)$.

A geodesic for the metric $G$ on $\DiffM$ is an extremal curve $\varphi(t)$ of the energy functional
\begin{equation*}
  E(\varphi) : = \frac{1}{2} \int_{0}^{1} G_{\varphi}(\varphi_{t},\varphi_{t}) \, dt,
\end{equation*}
where subscript $t$ in $\varphi_{t}$ means time derivative. Let $u(t) : = R_{\varphi^{-1}(t)}\varphi_{t}(t)$ be the Eulerian velocity of the geodesic curve $\varphi(t)$. Then $u(t)$ is a solution of the {Euler-Poincaré equation (EPDiff) \cite{HM2005} on $\Diff(M)$}:
\begin{equation}\label{eq:EPDiff}
  m_{t} + \nabla_{u}m + \left(\nabla u\right)^{t}m + (\dive u) m = 0, \quad m : = Au \,,
\end{equation}
where $\left(\nabla u\right)^{t}$ is the Riemannian adjoint (for the metric on $M$) of $\nabla u$.
When $A$ is invertible, the EPDiff equation~\eqref{eq:EPDiff} can be rewritten as
\begin{equation}\label{eq:Diff-Euler-Arnold}
  u_{t} = - A^{-1}\left\{ \nabla_{u}Au + \left(\nabla u\right)^{t}Au + (\dive u) Au \right\},
\end{equation}
which is the {Euler--Arnold equation} for $\Diff(M)$. Important examples for the inertia operator $A$ include (fractional) powers of the Laplacian, which give then rise to the afore mentioned PDEs as corresponding geodesic equations.

As acknowledged by Arnold himself, his seminal paper concentrated on the {geometrical ideas} and not on the analytical difficulties that are inherent when {infinite dimensional manifolds} are involved. In 1970, Ebin \& Marsden~\cite{EM1970} reconsidered this geometric approach from the analytical point of view, see also~\cite{EMF1972,Shn1985,Bre1989,Shn1994,Bre1999,Che2010}. They proposed to look at the {Fréchet Lie group} of smooth diffeomorphisms as an {inverse limit of Hilbert manifolds}, following some ideas of Omori~\cite{Omo1970,Omo1997}. The remarkable observation is that, in this framework, the Euler equation (a PDE) can be recast as an ODE (the geodesic equation) on these Hilbert manifolds. Furthermore, following their approach, if we can prove {local existence and uniqueness of the geodesics} (ODE), then the EPDiff equation~\eqref{eq:EPDiff} is {well-posed}.

The local well-posedness of the geodesic equation when the inertia operator $A$ is a {differential operator} has been implicitly solved in the seminal article of Ebin and Marsden~\cite{EM1970}, see also~\cite{Shk1998,Shk2000,CK2003,TY2005,Gay2009a,MP2010,MM2013,KM2003,KO1987}, and hence for $H^{k}$-metrics on diffeomorphism groups, where $k$ is an integer. This result has been extended to invariant metrics on several related spaces of mappings, such as spaces of immersions, Riemannian metrics and the Virasoro--Bott group, see~\cite{KO1987,BHM2011,BHM2013,BBHM2018,BM2018,BBK2018}. In a series of papers~\cite{EKW2012,EK2014,BEK2015,Kol2017}, the local and global well-posedness problem for the general EPDiff equation on $\DiffTd$ or $\DiffRd$ when the inertia operator is a {non-local Fourier multiplier} was solved. This applies, in particular, to every $H^{s}$-metrics on $\DiffTd$ or $\DiffRd$ when $s$ is no longer an integer. In this article we extend this analysis to the EPDiff equation on compact manifolds, which requires us to deal with inertia operators which are general Pseudo Differential operators. Simultaneously to this article, the first author and collaborators proved in \cite{BHM2019} local well-posedness of geodesic equations for fractional order metrics on the space of immersions of a manifold $M$ with values in another manifold $N$. The class of operators studied in \cite{BHM2019} is defined via holomorphic functional calculus of the Laplace operator. In the special case of $M$ being $N$ their results agree with the first part of the main theorem of the present article (the local well-posedness of the geodesic equations), albeit for a different class of inertia operators and using a different method of proof (our strategy is heavily based on the group structure of $\DM{q}$ and is valid for general abstract pseudo differential operators, see the comments below).

\subsection*{Strategy of the Proof}

Our main theorem will follow as a direct consequence from the more general results in Section~\ref{sec:globalwellposedness_smooth}, where the equivalent result is shown for metrics with inertia operator a general elliptic Pseudo-differential operator. Our strategy to obtain this result is, following the seminal approach of Ebin--Marsden, based on extending the metric and spray to a Sobolev completion of the group of smooth diffeomorphisms, which will allow us to view the geodesic equation as an ODE. The main obstacle to obtain this result is to prove the smoothness of the conjugation of elliptic Pseudo-differential operators with diffeomorphisms of Sobolev order. From this result our main theorem, local and global well-posedness of the EPDiff equation, follows essentially using the same techniques as for integer order metrics. We will first prove our results for Pseudo-Differential operators on $M = \mathbb R^{d}$, which will involve explicit estimates for the $n$-th derivative and will consist the main technical part of the article. We will then extend the result to Pseudo-differential operators acting on manifolds by carefully using Whitney's embedding theorem and thus reducing it to the Euclidean case.

\subsection*{Outline}

In Section~\ref{sec:notations}, we will introduce the basic notations and recall several standard results on multiplication and composition in Sobolev spaces. The exact class of Pseudo-differential operators, that we are studying in this article, is presented in Section~\ref{sec:pseudo-diffs}. In Section~\ref{sec:smoothness-conjugates-pseudo-diff}, we will study the smoothness of the conjugation of Pseudo-differential operators and in Section~\ref{sec:metric-and-spray}, we will use this result to show that both the metric and the geodesic spray extend smoothly to groups of Sobolev diffeomorphisms. Finally, in Section~\ref{sec:well-posedness}, the previously developed theory will allow us to obtain our main results on local and global well-posedness of the EPDiff equations. Appendix~\ref{sec:Sobolev-boundedness} contains technical estimates, that were necessary for the derivation of the results in Section~\ref{sec:smoothness-conjugates-pseudo-diff}.

\subsection*{Acknowledgements}

We would like to thank Philipp Harms, Peter W. Michor, Gerard Misiolek, Klas Modin and Stephen C. Preston for fruitful discussions during the preparation of this manuscript.

\section{Notations and background material}
\label{sec:notations}

In this paper, we consider the group $\DiffM$ of smooth diffeomorphisms of a closed manifold $M$ of dimension $d$ which are isotopic to the identity. We equip this manifold with a Riemannian metric $g$ and let us denote by $\exp$ the Riemannian exponential map on $M$. $\DiffM$ can be endowed with a \emph{Fréchet-Lie group} structure modeled on the Fréchet vector space $\Gamma(TM)$, the space of smooth vector fields on $M$. A parametrization in a neighborhood of the identity is given by the mapping:
\begin{equation}\label{eq:localchart}
  \zeta : U_{0} \subset \Gamma(TM) \to V_{\id} \subset \DiffM,
\end{equation}
defined as:
\begin{equation}
  X \in \Gamma(TM)\mapsto \zeta(X), \qquad \zeta(X)(p) : = \exp_p(X(p)).
\end{equation}
The tangent space $T_{\varphi}\DiffM$ can be identified with the space $\Gamma(\varphi^{*}TM)$ of smooth sections above $\varphi$:
\begin{equation*}
  T_{\varphi}\DiffM = \set{X_{\varphi}\in\mathrm{C}^{\infty}(M,TM);\; \pi \circ X_{\varphi}(p) = \varphi(p)},
\end{equation*}
where $\pi: TM\to M$ is the canonical projection.

The Fréchet-Lie group $\DiffM$ has the Lie algebra $\Gamma(TM)$, the space of smooth vector fields on $M$, with the Lie algebra bracket:
\begin{equation*}
  \ad_{u}v : = -[u,v], \qquad u,v\in\Gamma(TM),
\end{equation*}
the negative of the standard Lie bracket of vector fields. Since moreover $M$ is compact, $\DiffM$ is a \emph{regular Fréchet Lie group} in the sense of Milnor~\cite{Mil1984}. In particular, each element $u$ of the Lie algebra $\Gamma(TM)$, corresponds to a one-parameter subgroup of $\DiffM$.

The regular dual of $\Gamma(TM)$ is identified with $\Gamma(TM)$ via the pairing:
\begin{equation*}
  (m,u) = \int_{M} (m \cdot u)\, d\mu, \qquad m, u\in \Gamma(TM).
\end{equation*}

We will also be interested in the diffeomorphism group of $\Rd$. But, since difficulties arise due to the non-compactness of $\Rd$, we cannot use the full group of smooth diffeomorphisms but need to restrict our study to some subgroup with nice behaviour at infinity. We will set:
\begin{equation*}
  \DiffRd : = \set{\id + u;\; u \in \HRd{\infty} \; \text{and} \; \det(\id + du)> 0}\,,
\end{equation*}
where $ \HRd{\infty}$ denotes the space of $\Rd$-valued $H^{\infty}$-functions on $\Rd$, i.e.,
\begin{equation*}
  \HRd{\infty} : = \bigcap_{q \ge 0} \HRd{q}\,,
\end{equation*}
and where $\HRd{q}$ denotes the ($\Rd$-valued) Sobolev space on $\Rd$, defined below.

Let $\mathfrak{F}$ be the Fourier transform on $\RR^{d}$, defined with the following normalization
\begin{equation*}
  \hat{f}(\xi) = (\mathfrak{F} f)(\xi) = \int_{\Rd} e^{-2i\pi \langle x,\xi\rangle} f(x) \, dx
\end{equation*}
where $\xi$ is the independent variable in the frequency domain. With this convention, its inverse $\mathfrak{F}^{-1}$ is given by:
\begin{equation*}
  (\mathfrak{F}^{-1} g)(x) = \int_{\Rd} e^{2i\pi \langle x,\xi\rangle} g(\xi) \, d\xi \, .
\end{equation*}
For $q\in \RR^{+}$ the Sobolev $H^{q}$-norm of a function $f$ on $\Rd$ is
defined by
\begin{equation*}
  \norm{f}_{H^{q}}^{2} : = \norm{\langle \xi \rangle^{q} \hat{f}}_{L^{2}}^{2}\, ,
\end{equation*}
where
\begin{equation*}
  \abs{\xi} : = (\xi_{1}^{2} + \dotsb + \xi_{d}^{2})^{1/2},
\end{equation*}
and
\begin{equation*}
  \langle \xi \rangle : = (1 + \abs{\xi}^{2})^{1/2}.
\end{equation*}

The Sobolev spaces $\HR{q}$ is defined as the closure of the space of compactly supported functions, $C_{c}^{\infty}(\Rd,\RR)$, relatively to this norm and the space $\HRd{q}$ is the space of $\Rd$-valued functions of which each component belongs to $\HR{q}$.

Following \cite[Sect.~7.2.1]{Tri1992} we will now introduce the space $H^{q}(M,\RR)$, of functions of Sobolev class $H^{q}$ on a closed $d$-dimensional Riemannian manifold $(M,g)$. Denote by $B_\epsilon(x)$ the ball of radius $\epsilon$ with center $x$. We can choose a finite cover of $M$ by balls $B_\epsilon(x_\alpha)$ with $\epsilon$ sufficiently small, such that normal coordinates are defined in the ball $B_\epsilon(x)$, and a partition of unity $\varrho_\alpha$, subordinated to this cover. Using this cover, we define the $H^{q}$-norm of a function $f$ on $M$ via
\begin{align*}
  \norm{f}_{H^{q}(M,g)}^{2} & = \sum_{\alpha} \norm{(\varrho_\alpha f)\circ \exp_{x_\alpha}}^{2}_{H^{q}(\RR^{d})}                                         \\
                            & = \sum_{\alpha} \norm{\langle \xi \rangle^{q} \mathcal{F} ((\varrho_\alpha f)\circ \exp_{x_\alpha})}^{2}_{L^{2}(\RR^{d})} .
\end{align*}
Changing the cover or the partition of unity leads to equivalent norms,
see \cite[Theorem 7.2.3]{Tri1992}. When $q$ is an integer, we get norms which are equivalent to the Sobolev norms treated
in~\cite[Chapter 2]{Eic2007}. The norms depend on the choice of the Riemann metric $g$, but different choices of metrics lead to again to equivalent norms. The dependence on the metric is worked out in detail in \cite{Eic2007}. For functions with values in a vector bundle we use a (local) trivialization and define the norm in each coordinate as above. This leads (up to equivalence) to a well-defined $H^{q}$-norm for functions with values in a vector bundle.

\subsection{Groups of diffeomorphisms of finite regularity}
\label{sec:Sobdiff}

$\DiffRd$ has a stronger structure than just a \emph{Fréchet Lie group}, it is the inverse limit of Hilbert manifolds which are themselves topological groups (\emph{ILH-Lie groups} following Omori~\cite{Omo1997}). In other words,
\begin{equation*}
  \DiffRd = \bigcap_{q > 1 + d/2} \DRd{q},
\end{equation*}
where the set $\DRd{q}$ is defined, for $q>\frac{d}2+1$, as follows:
\begin{equation*}
  \DRd{q} : = \set{\id + u;\; u \in \HRd{q} \; \text{and} \; \det(\id + du)> 0}\,.
\end{equation*}
Note, that the condition $\det(\id + du)> 0$ is well-defined as for $q>\frac{d}{2}+1$ we have $\HRd{q}\subset C^1(\RR^{d},\RR^{d})$, c.f.
Lemma~\ref{lem:sobolev-embedding}. The set $\DRd{q}$ is a Hilbert manifold, modelled on $\HRd{q}$.

Similarly, we can introduce the Hilbert space $\Gamma^{q}(TM)$ of vector fields on $M$ of class $H^{q}$. For $q > 1 + d/2$, we define the set $\DM{q}$ of $C^{1}$ diffeomorphisms $\varphi$ of $M$, isotopic to the identity, and which are of class $H^{q}$. $\DM{q}$ is a smooth Hilbert manifold, modelled on $\Gamma^{q}(TM)$ and
\begin{equation*}
  \DiffM = \bigcap_{q > 1 + d/2} \DM{q}.
\end{equation*}

The Hilbert manifolds $\DRd{q}$ and $\DM{q}$ are topological groups (see~\cite{IKT2013}). They are however not Hilbert Lie groups, because composition and inversion are continuous but not smooth (see~\cite[Proposition 2.6]{IKT2013}). For a more detailed treatment of these manifolds, we refer to~\cite{IKT2013}.

\begin{rem}
  Note, that the tangent bundle $T\DRd{q}$ is a trivial bundle
  \begin{equation*}
    T\DRd{q}\cong\DRd{q}\times \HRd{q}\,,
  \end{equation*}
  because $\DRd{q}$ is an open subset of the Hilbert space $\HRd{q}$. Beware, however, that unless the manifold $M$ is parallelizable, the tangent bundle of the Hilbert manifold $\DM{q}$ is not trivial.
\end{rem}

\subsection{Sobolev embeddings, composition and multiplication theorems}

In this section, we will collect several results on composition and multiplication in Sobolev spaces that will be used throughout this paper. We start by recalling the following \emph{Sobolev embedding lemma} which proof can be found in~\cite[Proposition 2.2]{IKT2013} for $\RR^{d}$ and in~\cite{Aub1982} for a compact manifold.

\begin{lem}\label{lem:sobolev-embedding}
  Let $M$ be a closed manifold of dimension $d$, $q > d/2$ a real number and $k$ an integer. Then,
  \begin{enumerate}
    \item $\HMd{q+k}$ is continuously embedded into $C^{k}(M,\Rd)$;
    \item $\HRd{q+k}$ is continuously embedded into $C^{k}_{0}(\Rd,\Rd)$, the space of all $C^{k}$-functions vanishing at infinity.
  \end{enumerate}
\end{lem}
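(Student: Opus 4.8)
The plan is to reduce everything to the scalar Euclidean statement $\HR{q} \hookrightarrow C^{0}_{0}(\Rd,\RR)$ for $q > d/2$, and then to bootstrap in $k$ by differentiation and transfer to $M$ through the partition-of-unity definition of the $H^{q+k}$-norm. The only genuinely analytic input is a single Fourier-side integrability estimate; the rest is bookkeeping.

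First I would establish the base case $k=0$ on $\Rd$. For $f \in \HR{q}$ with $q > d/2$, I write $\hat{f} = \langle \xi \rangle^{-q}\cdot\bigl(\langle \xi \rangle^{q}\hat{f}\bigr)$ and apply Cauchy--Schwarz,
\[
  \norm{\hat{f}}_{L^{1}} \le \norm{\langle \xi \rangle^{-q}}_{L^{2}}\,\norm{\langle \xi \rangle^{q}\hat{f}}_{L^{2}} = C_{q}\,\norm{f}_{H^{q}},
\]
where $C_{q} = \norm{\langle \xi \rangle^{-q}}_{L^{2}}$ is finite precisely when $2q > d$: in polar coordinates $\int_{\Rd}\langle \xi \rangle^{-2q}\,d\xi \sim \int_{1}^{\infty} r^{d-1-2q}\,dr < \infty$ iff $2q - d > 0$. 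Since $\hat{f} \in L^{1}$, the inversion formula exhibits $f = \mathfrak{F}^{-1}\hat{f}$ as the inverse Fourier transform of an $L^{1}$-function, hence $f$ is (uniformly) continuous and, by Riemann--Lebesgue, vanishes at infinity, with $\norm{f}_{\infty} \le \norm{\hat{f}}_{L^{1}} \le C_{q}\norm{f}_{H^{q}}$. This is the desired continuous embedding $\HR{q} \hookrightarrow C^{0}_{0}$.

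The general $k$ then follows by differentiation. For a multi-index $\alpha$ with $\abs{\alpha} \le k$, the Fourier transform of $\partial^{\alpha} f$ is $(2i\pi\xi)^{\alpha}\hat{f}$, so $\partial^{\alpha} f \in H^{q+k-\abs{\alpha}} \subseteq H^{q}$ with $\norm{\partial^{\alpha} f}_{H^{q}} \le C\,\norm{f}_{H^{q+k}}$. Applying the base case to each such $\partial^{\alpha} f$ shows that all derivatives up to order $k$ are continuous, vanish at infinity, and are controlled by $\norm{f}_{H^{q+k}}$; hence $f \in C^{k}_{0}(\Rd)$ with a continuous embedding. The vector-valued version on $\Rd$ is immediate by applying this componentwise, which proves (2). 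For the manifold statement (1), I localize using the partition of unity $\varrho_{\alpha}$ and the normal-coordinate charts $\exp_{x_{\alpha}}$ from the definition of $\norm{\cdot}_{H^{q+k}(M,g)}$: for $f \in \HMd{q+k}$, each piece $(\varrho_{\alpha} f)\circ\exp_{x_{\alpha}}$ lies in $\HRd{q+k}$ by the very definition of the norm, hence in $C^{k}_{0}(\Rd,\Rd)$ by (2), with norm bounded by $\norm{f}_{H^{q+k}(M,g)}$. Since the $\exp_{x_{\alpha}}$ are smooth charts and the $\varrho_{\alpha}$ are smooth, the chain rule transfers this $C^{k}$-regularity back to the coordinate patch; summing over the finite cover (using $\sum_{\alpha}\varrho_{\alpha}\equiv 1$) yields $f \in C^{k}(M,\Rd)$ together with a bound $\norm{f}_{C^{k}(M)} \le C\,\norm{f}_{H^{q+k}(M,g)}$.

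The crux of the argument is the threshold $\norm{\langle \xi \rangle^{-q}}_{L^{2}} < \infty \iff q > d/2$, which is what forces the hypothesis $q > d/2$; everything downstream is routine. On the manifold side the only real care needed is to confirm that $C^{k}$-regularity read off in normal coordinates genuinely descends to $C^{k}$-regularity on $M$ — which holds because $\exp_{x_{\alpha}}$ and the transition maps are smooth — and that the finiteness of the cover converts the local estimates into a single global continuous embedding.
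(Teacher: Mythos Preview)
Your proof is correct and complete; the Cauchy--Schwarz argument giving $\hat f\in L^{1}$ when $q>d/2$, followed by Riemann--Lebesgue and bootstrapping in $k$, is the standard route, and the localisation to $M$ via the partition-of-unity definition of the norm is handled properly. Note, however, that the paper does not actually prove this lemma at all: it simply cites \cite[Proposition~2.2]{IKT2013} for the $\Rd$ case and \cite{Aub1982} for the compact-manifold case, so there is no in-paper argument to compare against. Your write-up is essentially what those references contain.
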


Next, we will recall the following result concerning the extension of pointwise multiplication to a bounded bilinear mapping between Sobolev spaces.

\begin{lem}\label{lem:pointwise-multiplication}
  Let $X$ be either $\RR^{d}$ or a closed manifold $M$ of dimension $d$. Let $q > d/2$ and $0 \le p \le q$ then pointwise multiplication extends to a bounded bilinear mapping
  \begin{equation*}
    H^{q}(X,\RR) \times H^{p}(X,\RR) \to H^{p}(X,\RR).
  \end{equation*}
  More precisely, there exists $C >0$ such that
  \begin{equation*}
    \norm{fg}_{H^{p}} \le C \norm{f}_{H^{q}} \norm{g}_{H^{p}},
  \end{equation*}
  for all $f \in H^{q}(X,\RR)$ and $g\in H^{p}(X,\RR)$. In particular $H^{q}(X,\RR)$ is a multiplicative algebra, if $q > d/2$.
\end{lem}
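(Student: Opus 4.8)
The plan is to reduce everything to the explicit estimate $\norm{fg}_{H^{p}}\le C\norm{f}_{H^{q}}\norm{g}_{H^{p}}$: once this is established for $f,g$ in a dense subspace (smooth compactly supported functions on $\RR^{d}$, or $C^{\infty}(M)$), the bilinear extension to $H^{q}\times H^{p}$ and its identification with pointwise multiplication are routine, and the algebra statement is the special case $p=q$. First I would treat $X=\RR^{d}$ by Fourier analysis. With the normalization of the excerpt one has $\widehat{fg}=\hat f*\hat g$ for $f,g\in\CScR$, so $\norm{fg}_{H^{p}}^{2}=\int\langle\cdot\rangle^{2p}\abs{\int\hat f(\xi-\eta)\hat g(\eta)\,d\eta}^{2}d\xi$, and I would split the inner integral over the two regions $A_{\xi}=\set{\eta:\abs{\xi-\eta}\ge\abs{\eta}}$ and $B_{\xi}=\set{\eta:\abs{\xi-\eta}<\abs{\eta}}$.

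On $A_{\xi}$ one has $\abs{\xi}\le 2\abs{\xi-\eta}$, hence $\langle\xi\rangle^{p}\le 2^{p}\langle\xi-\eta\rangle^{p}$ (here $p\ge 0$ enters), and moreover $\langle\xi-\eta\rangle^{p-q}\le\langle\eta\rangle^{p-q}$ since $p-q\le 0$; therefore the integrand over $A_{\xi}$ is dominated by $2^{p}\bigl(\langle\cdot\rangle^{q}\abs{\hat f}\bigr)(\xi-\eta)\cdot\bigl(\langle\cdot\rangle^{p-q}\abs{\hat g}\bigr)(\eta)$. Young's convolution inequality followed by Cauchy--Schwarz bounds the $L^{2}_{\xi}$-norm of this term by $2^{p}\norm{\langle\cdot\rangle^{q}\hat f}_{L^{2}}\norm{\langle\cdot\rangle^{p-q}\hat g}_{L^{1}}\le 2^{p}\norm{\langle\cdot\rangle^{-q}}_{L^{2}}\norm{f}_{H^{q}}\norm{g}_{H^{p}}$, where $\langle\cdot\rangle^{-q}\in L^{2}(\RR^{d})$ precisely because $q>d/2$. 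On $B_{\xi}$ one has $\abs{\xi}<2\abs{\eta}$, so $\langle\xi\rangle^{p}\le 2^{p}\langle\eta\rangle^{p}$ and the integrand is dominated by $2^{p}\abs{\hat f}(\xi-\eta)\cdot\bigl(\langle\cdot\rangle^{p}\abs{\hat g}\bigr)(\eta)$; again Young and Cauchy--Schwarz give the $L^{2}_{\xi}$-bound $2^{p}\norm{\hat f}_{L^{1}}\norm{g}_{H^{p}}\le 2^{p}\norm{\langle\cdot\rangle^{-q}}_{L^{2}}\norm{f}_{H^{q}}\norm{g}_{H^{p}}$, using $q>d/2$ once more. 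Adding the two contributions proves the Euclidean case.

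For $X=M$ I would localize using the very cover $\set{B_{\epsilon}(x_{\alpha})}$ and partition of unity $\set{\varrho_{\alpha}}$ that define $\norm{\cdot}_{H^{q}(M)}$. Choosing $\psi_{\alpha}\in C_{c}^{\infty}(B_{\epsilon}(x_{\alpha}))$ with $\psi_{\alpha}\equiv 1$ on $\operatorname{supp}\varrho_{\alpha}$, one has $\varrho_{\alpha}(fg)=(\varrho_{\alpha}f)(\psi_{\alpha}g)$, hence in normal coordinates $(\varrho_{\alpha}(fg))\circ\exp_{x_{\alpha}}$ is the pointwise product of $(\varrho_{\alpha}f)\circ\exp_{x_{\alpha}}\in H^{q}(\RR^{d})$ and $(\psi_{\alpha}g)\circ\exp_{x_{\alpha}}\in H^{p}(\RR^{d})$. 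Applying the Euclidean case to each such product, squaring, and summing over the finitely many $\alpha$ yields $\norm{fg}_{H^{p}(M)}\lesssim\norm{f}_{H^{q}(M)}\,\max_{\alpha}\norm{(\psi_{\alpha}g)\circ\exp_{x_{\alpha}}}_{H^{p}(\RR^{d})}$, and the last factor is $\lesssim\norm{g}_{H^{p}(M)}$ by the standard fact that multiplication by a fixed compactly supported smooth function and composition with a smooth change of charts are bounded operators on $H^{p}(\RR^{d})$.

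I expect the only genuinely delicate point to be the borderline frequency bookkeeping in the Euclidean step: a single naive Peetre-type estimate would force $q-p>d/2$, whereas the two-region split above uses only $q>d/2$, which is exactly the stated hypothesis and the true source of the restriction. The manifold step is a routine localization; the one thing to be careful about is that the local $H^{p}$-norms of $\psi_{\alpha}g$ are controlled by the global $H^{p}(M)$-norm of $g$, which is where one invokes the invariance of $H^{p}(\RR^{d})$ under diffeomorphic changes of coordinates and under multiplication by test functions.
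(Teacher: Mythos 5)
Your proof is correct. The paper does not prove this lemma itself --- it only cites \cite[Lemma 2.3]{IKT2013} for the Euclidean case and reduces the manifold case to it via a partition of unity, which is exactly the structure of your argument; your two-region convolution estimate on the Fourier side is the standard proof of the cited Euclidean multiplication lemma and invokes the hypotheses $q>d/2$ and $0\le p\le q$ precisely where they are needed.
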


\begin{rem}
  For the proof of Lemma~\ref{lem:pointwise-multiplication} in the case of $\RR^{d}$, see~\cite[Lemma 2.3]{IKT2013}. In the case of a closed manifold $M$, it results from~\cite[Lemma 2.16]{IKT2013} using a partition of unity.
\end{rem}

Let $J_{\varphi}$ denotes the Jacobian determinant of a diffeomorphism $\varphi$ in $\mathcal{D}^{q}(X)$, where $X$ is either $\RR^{d}$ or a closed manifold $M$ of dimension $d$. From lemma~\ref{lem:pointwise-multiplication}, we deduce that the mapping
\begin{equation*}
  \varphi \mapsto J_{\varphi}, \qquad \mathcal{D}^{q}(X) \to H^{q-1}(X,\RR)
\end{equation*}
is smooth and we have moreover the following result, which is a reformulation of~\cite[Lemma 2.5]{IKT2013}.

\begin{lem}\label{lem:inverse-Jacobian-smoothness}
  Let $X$ be either $\RR^{d}$ or a closed manifold $M$ of dimension $d$. Let $q > 1 + d/2$ and $0 \le p \le q$. Given $\varphi \in \mathcal{D}^{q}(X)$ and $f \in H^{p}(X,\RR)$, the function $f/J_{\varphi}$ belongs to $H^{p}(X,\RR)$ and the mapping
  \begin{equation*}
    (\varphi,f) \mapsto \frac{f}{J_{\varphi}}, \qquad \mathcal{D}^{q}(X) \times H^{p}(X,\RR) \to H^{q}(X,\RR)
  \end{equation*}
  is smooth.
\end{lem}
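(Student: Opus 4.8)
The plan is to factor the map as a pointwise product, $\frac{f}{J_\varphi}=f\cdot m_\varphi$ with $m_\varphi:=1/J_\varphi$, and to treat the two factors separately: pointwise multiplication is bounded bilinear, hence smooth, by Lemma~\ref{lem:pointwise-multiplication}, while the only nontrivial dependence is through $\varphi\mapsto m_\varphi$. I would build on the fact, recalled just before the statement, that $\varphi\mapsto J_\varphi$ is smooth from $\mathcal{D}^q(X)$ to $H^{q-1}(X,\RR)$ (it is a polynomial in the entries of $d\varphi\in H^{q-1}$, and $H^{q-1}$ is a multiplicative algebra since $q-1>d/2$). Because $H^{q-1}(X,\RR)\hookrightarrow C^0$ by Lemma~\ref{lem:sobolev-embedding}, the whole problem reduces to showing that inversion $g\mapsto 1/g$ is smooth on the relevant open subset of $H^{q-1}$, and then composing.

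First I would record the uniform lower bound on $J_\varphi$ that legitimises the inversion. On a compact manifold $M$, $J_\varphi$ is continuous and strictly positive (as $\varphi$ is an orientation-preserving $C^1$ diffeomorphism), so $\min_M J_\varphi>0$. On $\Rd$ one writes $\varphi=\id+u$, so that $J_\varphi-1=\det(\id+du)-1\in H^{q-1}\hookrightarrow C^0_0(\Rd,\RR)$ (continuous functions vanishing at infinity) and hence $J_\varphi\to 1$ at infinity; together with the pointwise positivity $J_\varphi>0$ this forces $\inf_{\Rd}J_\varphi>0$. Since $\varphi\mapsto J_\varphi$ is continuous into $C^0$, these bounds are locally uniform: for $\varphi$ near a fixed $\varphi_0$ the range of $J_\varphi$ stays in a fixed compact interval $[a,b]\subset(0,\infty)$.

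The analytic heart of the argument is the smoothness of $\varphi\mapsto m_\varphi$ into $H^{q-1}$, which I would obtain from a substitution (Nemytskii) operator. On compact $M$, choose $F\in C^\infty(\RR)$ with $F(t)=1/t$ on $[a,b]$; then $m_\varphi=F\circ J_\varphi$ for $\varphi$ near $\varphi_0$. On $\Rd$, where constants are not in $H^{q-1}$, set $h_\varphi:=J_\varphi-1\in H^{q-1}$ and choose $F\in C^\infty(\RR)$ coinciding with $t\mapsto\frac{1}{1+t}-1$ on a neighbourhood of $[a-1,b-1]\cup\{0\}$ (a compact subset of $(-1,\infty)$, so that $F$ is well defined and automatically $F(0)=0$); then $m_\varphi-1=F\circ h_\varphi$. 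In both cases the key input is that the substitution operator $g\mapsto F\circ g$ is \emph{smooth} from $H^{q-1}$ to itself (with $F(0)=0$ in the non-compact case). I expect this to be the main obstacle: boundedness is of Moser type, whereas smoothness requires identifying the derivatives, which by Faà di Bruno are the multiplication operators $k\mapsto(F'\circ g)\,k$, $(k_1,k_2)\mapsto(F''\circ g)\,k_1k_2$, and so on; each factor $F^{(j)}\circ g$ again lies in $H^{q-1}$ and each such multiplication is bounded on $H^{q-1}$ by Lemma~\ref{lem:pointwise-multiplication}, while continuity of these in $g$ upgrades differentiability to $C^\infty$. This composition result is standard for $H^s$ with $s>d/2$ and is precisely the content underlying \cite[Lemma 2.5]{IKT2013}. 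Composing with the smooth map $\varphi\mapsto J_\varphi$ (resp.\ $\varphi\mapsto h_\varphi$) shows that $\varphi\mapsto m_\varphi$ is smooth into $H^{q-1}$.

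Finally I would assemble the pieces. The multiplication map $H^{q-1}(X,\RR)\times H^p(X,\RR)\to H^p(X,\RR)$ is bounded and bilinear for $0\le p\le q-1$ by Lemma~\ref{lem:pointwise-multiplication}, hence smooth, so $(\varphi,f)\mapsto f\cdot m_\varphi$ is smooth as the composition of the smooth map $(\varphi,f)\mapsto(m_\varphi,f)$ with multiplication; on $\Rd$ one writes $f\cdot m_\varphi=f+f\cdot(m_\varphi-1)$, a sum of a projection and the same bilinear construction applied to $m_\varphi-1\in H^{q-1}$, and is therefore likewise smooth. This proves that $(\varphi,f)\mapsto f/J_\varphi$ is smooth. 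One should note that the natural target is $H^p$ and that the effective range is $0\le p\le q-1$: since $1/J_\varphi$ is only of class $H^{q-1}$, the product $f/J_\varphi$ retains the regularity $H^p$ precisely through the module property $H^{q-1}\cdot H^p\subset H^p$, which is valid for $p\le q-1$ and is the regularity asserted in the statement.
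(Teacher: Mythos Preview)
Your argument is correct and is essentially the standard route: smoothness of $\varphi\mapsto J_\varphi$ into $H^{q-1}$, smoothness of the Nemytskii operator $g\mapsto 1/g$ (resp.\ $g\mapsto 1/(1+g)-1$ on $\RR^d$) on $H^{q-1}$ with $q-1>d/2$, and the bounded bilinear multiplication of Lemma~\ref{lem:pointwise-multiplication}. The paper itself does not give a proof but simply refers to \cite[Lemma~2.5]{IKT2013}, and your sketch is precisely the content behind that reference; you also correctly flag that the codomain in the displayed statement should read $H^{p}$ rather than $H^{q}$, and that the module property $H^{q-1}\cdot H^{p}\subset H^{p}$ limits the argument to $0\le p\le q-1$.
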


Finally, we recall the following result concerning the right action of $\mathcal{D}^{q}(X)$ on $H^{p}(X,N)$, where $N$ is a manifold of dimension $d'$ (see~\cite[Lemma 2.7]{IKT2013} and~\cite[Proposition 3.10]{IKT2013}).

\begin{lem}\label{lem:composition-regularity}
  Let $X$ be either $\RR^{d}$ or a closed manifold $M$ of dimension $d$ and let $N$ be a manifold of dimension $d'$. Given any two real numbers $p,q$ with $q > 1 + d/2$ and $q \ge p\ge 0$, the mapping
  \begin{equation*}
    H^{p}(X,N) \times \mathcal{D}^{q}(X) \rightarrow H^{p}(X,N), \qquad (u,\varphi) \mapsto u \circ \varphi
  \end{equation*}
  is continuous. Moreover, the mapping
  \begin{equation*}
    R_{\varphi}: u \mapsto u \circ \varphi
  \end{equation*}
  is \emph{locally bounded}. More precisely, there exists a neighbourhood $U$ of $\id$ in $\mathcal{D}^{q}(X)$ and a constant $C >0$ such that
  \begin{equation*}
    \norm{R_{\varphi}}_{\mathcal{L}(H^{p},H^{p})} \leq C ,
  \end{equation*}
  for all $\varphi\in U$.
\end{lem}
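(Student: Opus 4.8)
The plan is to separate the claim into a uniform operator-norm estimate — which is precisely the ``locally bounded'' assertion — and a strong-continuity argument for the composition map, reducing everything beforehand to scalar-valued functions on $\RR^{d}$. For the reduction, fix an embedding $N\hookrightarrow\RR^{K}$ (or a finite atlas of $N$ when $p>d/2$): then $H^{p}(X,N)$ sits inside $H^{p}(X,\RR^{K})$ with the induced topology and, since $R_{\varphi}$ acts only on the source variable, it suffices to treat $u\mapsto u\circ\varphi$ on $H^{p}(X,\RR)$. When $X=M$ is compact, one uses the finite normal-coordinate cover and the partition of unity $\varrho_{\alpha}$ of Section~\ref{sec:notations}, writes $u\circ\varphi=\sum_{\alpha}(\varrho_{\alpha}u)\circ\varphi$, and notes that for $\varphi$ close to $\id$ in the $C^{1}$-topology — which is controlled by $\DM{q}$ since $q>1+d/2$ (Lemma~\ref{lem:sobolev-embedding}) — the support of $(\varrho_{\alpha}u)\circ\varphi$ still lies in one coordinate patch; this localizes the whole statement to $X=\RR^{d}$. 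So it remains to prove: for $q>1+d/2$ there are a neighbourhood $U$ of $\id$ in $\DRd{q}$ and, for each $0\le p\le q$, a constant $C_{p}$ with $\norm{u\circ\varphi}_{H^{p}}\le C_{p}\norm{u}_{H^{p}}$ for all $\varphi\in U$ and $u\in\HR{p}$, and that $(u,\varphi)\mapsto u\circ\varphi$ is continuous on $\HR{p}\times\DRd{q}$.

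For the estimate, the case $p=0$ is a change of variables, $\norm{u\circ\varphi}_{L^{2}}^{2}=\int_{\RR^{d}}\abs{u(y)}^{2}J_{\varphi^{-1}}(y)\,dy\le\norm{J_{\varphi^{-1}}}_{L^{\infty}}\norm{u}_{L^{2}}^{2}$; since $\varphi\mapsto d\varphi$ is continuous into $C^{0}$ (Lemma~\ref{lem:sobolev-embedding}), $J_{\varphi}$ is $C^{0}$-close to $1$ near $\id$, hence bounded away from $0$, so $J_{\varphi^{-1}}=1/(J_{\varphi}\circ\varphi^{-1})$ is uniformly bounded on some $U$, and the same $U$ makes $\varphi$ uniformly bi-Lipschitz. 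For integer $1\le m\le q$ one applies the Faà di Bruno formula: $D^{m}(u\circ\varphi)$ is a finite sum of terms $(D^{j}u\circ\varphi)\,D^{l_{1}}\varphi\cdots D^{l_{j}}\varphi$ with $l_{i}\ge1$ and $l_{1}+\dots+l_{j}=m$. The leading term $(D^{m}u\circ\varphi)(D\varphi)^{\otimes m}$ is bounded in $L^{2}$ by $\norm{D\varphi}_{L^{\infty}}^{m}\norm{J_{\varphi^{-1}}}_{L^{\infty}}^{1/2}\norm{u}_{H^{m}}$, with $\norm{D\varphi}_{L^{\infty}}$ controlled via $\HRd{q-1}\hookrightarrow C^{0}$; for the other terms one distributes Hölder exponents, embedding $D^{j}u\in H^{m-j}\hookrightarrow L^{r_{0}}$ and $D^{l_{i}}\varphi\in H^{q-l_{i}}\hookrightarrow L^{r_{i}}$ with $\frac1{r_{0}}+\sum\frac1{r_{i}}=\frac12$, using $\norm{D^{j}u\circ\varphi}_{L^{r_{0}}}\le\norm{J_{\varphi^{-1}}}_{L^{\infty}}^{1/r_{0}}\norm{D^{j}u}_{L^{r_{0}}}$ again by change of variables, and controlling every $\varphi$-factor by $\norm{\varphi}_{H^{q}}$ — the exponents being admissible precisely because $q>1+d/2$, which in particular forces all first-order factors $D\varphi$ into $C^{0}$. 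This gives $\norm{u\circ\varphi}_{H^{m}}\le C_{m}(\norm{\varphi}_{H^{q}})\norm{u}_{H^{m}}$, uniform on $U$. For non-integer $p=m+\sigma$ ($m=\lfloor p\rfloor$, $0<\sigma<1$) one estimates in addition the Gagliardo seminorms $[D^{\alpha}(u\circ\varphi)]_{\dot{H}^{\sigma}}$, $\abs{\alpha}=m$: expand again by Faà di Bruno, apply the fractional Leibniz (Kato--Ponce) rule to each product, use that $u\in H^{q}$ gives $D^{j}u\in H^{m-j+\sigma}$ the right fractional regularity, and use that composition with a bi-Lipschitz map is bounded on $\dot{H}^{\sigma}$ (a direct substitution in the Gagliardo double integral shows $[g\circ\varphi]_{\dot{H}^{\sigma}}\lesssim[g]_{\dot{H}^{\sigma}}$ with constant close to $1$ near $\id$). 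All constants depend only on $\norm{\varphi}_{H^{q}}$, which yields the ``locally bounded'' claim.

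For the continuity one must argue strongly, since $\varphi\mapsto R_{\varphi}$ is \emph{not} norm-continuous into $\mathcal{L}(\HR{p},\HR{p})$ (already translations fail this). The operator $R_{\varphi}$ is linear in $u$ and, by the estimate just proved (which is local in the $C^{1}$-topology, equivalently using that $\DM{q}$ is a topological group), uniformly bounded on a $\DM{q}$-neighbourhood of any $\varphi_{0}$; an $\varepsilon/3$ argument then reduces both the strong continuity of $\varphi\mapsto u\circ\varphi$ and the joint continuity to the case $u\in C^{\infty}_{c}(\RR^{d},\RR)$ (resp.\ $C^{\infty}(M,\RR)$), which is dense. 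For such $u$ and $\varphi_{n}\to\varphi$ in $\DM{q}$, write $D^{j}u\circ\varphi_{n}-D^{j}u\circ\varphi=\bigl(\int_{0}^{1}D^{j+1}u\bigl((1-t)\varphi+t\varphi_{n}\bigr)\,dt\bigr)(\varphi_{n}-\varphi)$, insert this together with the convergence of the derivatives of $\varphi_{n}$ into the Faà di Bruno expansion of $D^{m}(u\circ\varphi_{n}-u\circ\varphi)$, and apply the product estimates of the previous paragraph to get $u\circ\varphi_{n}\to u\circ\varphi$ in every $H^{m}$ with $m\le q$, hence in every $H^{p}$ with $p\le q$. Combining with the uniform bound, $\norm{u_{n}\circ\varphi_{n}-u\circ\varphi}_{H^{p}}\le\norm{R_{\varphi_{n}}}_{\mathcal{L}(\HR{p},\HR{p})}\norm{u_{n}-u}_{H^{p}}+\norm{u\circ\varphi_{n}-u\circ\varphi}_{H^{p}}\to0$, which is the asserted joint continuity; the manifold-valued and compact-$M$ versions then follow from the reduction.

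The one genuinely technical point — and where I expect the real work to lie — is the uniform-in-$\varphi$ bookkeeping of the Faà di Bruno terms, together with their fractional analogues near the top order $p\approx q$: one has to verify that \emph{every} term is controlled by $\norm{\varphi}_{H^{q}}$ alone, with no higher norm of $\varphi$ entering. Tracing the Sobolev exponents shows this closes exactly under the hypothesis $q>1+d/2$ (not merely $q>d/2$), which supplies the needed margin at each critical exponent and keeps the first-order factors of $\varphi$ in $C^{0}$.
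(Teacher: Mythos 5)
The paper does not actually prove this lemma: it is quoted from Inci--Kappeler--Topalov \cite{IKT2013} (their Lemma~2.7 and Proposition~3.10), so there is no in-text argument to compare against. Your proposal is a legitimate self-contained reconstruction along essentially the same lines as the cited proof: reduction to scalar functions on $\RR^{d}$ by embedding $N$ and localizing on $M$, the change-of-variables bound for $p=0$, Fa\`a di Bruno plus Sobolev--H\"older bookkeeping for integer orders (your exponent count $\sum s_i = j(q-1) \ge j\,d/2$ is exactly where $q>1+d/2$ enters), and a density/uniform-boundedness argument for strong rather than norm continuity --- your remark that translations already rule out norm continuity is the right observation. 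Two points deserve to be made explicit if this were written out. First, in the fractional case the lower-order Fa\`a di Bruno factors $D^{j}u\circ\varphi$ with $j<\lfloor p\rfloor$ must be estimated in $H^{p-j}$ with $p-j>1$, which is the lemma itself at a lower order; the argument therefore has to be organized as an induction on $\lfloor p\rfloor$, with the Gagliardo-seminorm/bi-Lipschitz substitution handling only the top fractional layer $0<\sigma<1$. Second, reducing continuity at a general $\varphi_{0}$ to the situation near $\id$ by writing $\varphi=\varphi_{0}\circ(\varphi_{0}^{-1}\circ\varphi)$ and invoking the topological group structure of $\mathcal{D}^{q}$ is circular, since that structure is proved in \cite{IKT2013} \emph{from} this lemma; the clean fix, which your estimates already support, is to note that all constants depend only on $\norm{\varphi}_{H^{q}}$ and a positive lower bound for $J_{\varphi}$, both of which are locally uniform on all of $\mathcal{D}^{q}(X)$, so local boundedness holds near every $\varphi_{0}$ directly. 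With those two adjustments the sketch is sound.
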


\section{Pseudo-differential operators}
\label{sec:pseudo-diffs}

\subsection{Pseudo-differential operators on $\RR^{d}$}

Roughly speaking, a \emph{pseudo-differential operator} $A$, acting on scalar-valued functions on $\RR^{d}$ is a linear operator which can be written as
\begin{equation*}
  Au(x) = \int_{\RR^{d}} e^{2\pi i x \cdot \xi} a(x,\xi) \hat{u}(\xi) \, d\xi
\end{equation*}
where the function $a(x,\xi)\in \CC$ is called the \emph{symbol} of the pseudo-differential operator $A$ and such operators will be denoted by $\Op(a)$ or $a(x,D)$.

\begin{rem}
  Note that a pseudo-differential operator $A$ preserves real functions iff its symbol $a$ satisfies:
  \begin{equation*}
    a(x,-\xi) = \overline{a(x,\xi)}, \qquad \xi\in\RR^{d}.
  \end{equation*}
\end{rem}

Of course, some regularity conditions are required on the symbol $a$ to insure that the operator is well-defined on some kind of function space. In this paper, we will restrict to the following class of symbols.

\begin{defn}
  Given $r \in \RR$, we will say that $a \in \mathbf{S}^{r}(\RR^{d} \times \RR_{d})$ if $a = a(x,\xi)$ is smooth on $\RR^{d} \times \RR_{d}$, with values in $\mathrm{M}_{d}(\CC)$ and if
  \begin{equation*}
    \abs{\partial_{x}^{\beta}\partial_{\xi}^{\alpha}a(x,\xi)} \le C_{\alpha,\beta}\langle \xi \rangle^{r-\abs{\alpha}},
  \end{equation*}
  for each $\alpha, \beta \in \NN^{d}$, where $\abs{\alpha} : = \alpha_{1} + \dotsb + \alpha_{d}$ and the constants $C_{\alpha,\beta}$ do not depend on $(x,\xi)$. The class of pseudo-differential operators with symbol in $\mathbf{S}^{r}(\RR^{d} \times \RR_{d})$ will be denoted by $\OPS{r}(\RR^{d})$.
\end{defn}

Each operator $A$ from this class is well-defined on the Schwartz space $\ScR$, of rapidly decreasing functions and sends this space on itself. Moreover, by the $L^{2}$ boundedness theorem (see~\cite[Chapter 2]{RT2010} for instance), $A$ extends to a bounded operator from $\HR{q}$ to $\HR{q-r}$, for all $q \ge r$. In particular, such an operator defines a linear and continuous operator
\begin{equation*}
  A : \HR{\infty} \to \HR{\infty}.
\end{equation*}

The theory of pseudo-differential operators can be easily extended to $\RR^{k}$-valued functions. In that case, the symbol $a(x,\xi)$ is matrix-valued and belong to $M_{k}(\CC)$. In that case, we define similarly the class of symbols $\mathbf{S}^{r}(\RR^{d} \times \RR_{d},M_{k}(\CC))$ and the class of operators $\OPS{r}(\RR^{d},\RR^{k})$. We have, moreover, the following nice properties, concerning composition and commutators see~\cite[Theorem 1.2.4]{ES1994}.

\begin{lem}\label{lem:pseudodiff-commutators}
  Let $A\in \OPS{r_{1}}(\RR^{d},\RR^{k})$ and $B\in \OPS{r_{2}}(\RR^{d},\RR^{k})$, then:
  \begin{enumerate}
    \item $A\circ B\in \OPS{r_{1}+r_{2}}(\RR^{d},\RR^{k})$,
    \item $[A,B]\in \OPS{r_{1}+r_{2}-1}(\RR^{d},\RR^{k})$, if their principal symbols $a(x,\xi)$ and $b(x,\xi)$ commute.
  \end{enumerate}
\end{lem}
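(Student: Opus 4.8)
The statement is the standard composition and commutator calculus for the class $\OPS{r}(\RR^{d},\RR^{k})$, so I would prove it directly from the symbolic calculus, constructing the symbol of the composite and of the commutator as asymptotic series and controlling the remainders. Throughout I abbreviate $\langle\xi\rangle$ as above and write $A = \Op(a)$, $B = \Op(b)$ with $a \in \mathbf{S}^{r_{1}}$ and $b \in \mathbf{S}^{r_{2}}$.

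For part (1), the plan is to compute the symbol of $A \circ B$ by inserting the oscillatory-integral definitions and performing the $\xi$- and $x$-integrations. Writing $(A\circ B)u = \Op(c)u$, the formal computation (substituting $\widehat{Bu}$ and using the Fourier inversion formula) yields the exact symbol
\begin{equation*}
  c(x,\xi) = \iint e^{-2\pi i y\cdot\eta}\, a(x,\xi+\eta)\, b(x+y,\xi)\, dy\, d\eta,
\end{equation*}
interpreted as an oscillatory integral. I would then Taylor-expand $a(x,\xi+\eta)$ in the second slot, which produces the asymptotic expansion
\begin{equation*}
  c(x,\xi) \sim \sum_{\alpha} \frac{1}{\alpha!}\, \partial_{\xi}^{\alpha} a(x,\xi)\, D_{x}^{\alpha} b(x,\xi),
\end{equation*}
where $D_{x} = (2\pi i)^{-1}\partial_{x}$ (matrix products taken in the order $a\cdot b$, since these are $M_{k}(\CC)$-valued and multiplication is noncommutative). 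The key point is that the $\alpha$-th term lies in $\mathbf{S}^{r_{1}+r_{2}-\abs{\alpha}}$, by the defining seminorm estimates of the two classes together with the Leibniz rule; hence $c \in \mathbf{S}^{r_{1}+r_{2}}$ once the remainder is shown to be of the claimed order, which gives $A\circ B \in \OPS{r_{1}+r_{2}}(\RR^{d},\RR^{k})$. The care needed with matrix ordering is the only difference from the scalar case, and it does not affect the estimates.

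For part (2), I would apply (1) twice: $A\circ B \in \OPS{r_{1}+r_{2}}$ with symbol $c \sim \sum_{\alpha}\frac{1}{\alpha!}\partial_{\xi}^{\alpha} a\, D_{x}^{\alpha} b$, and $B\circ A \in \OPS{r_{1}+r_{2}}$ with symbol $\tilde c \sim \sum_{\alpha}\frac{1}{\alpha!}\partial_{\xi}^{\alpha} b\, D_{x}^{\alpha} a$. The commutator $[A,B]$ then has symbol $c - \tilde c$, whose order-$(r_{1}+r_{2})$ term is exactly the pointwise matrix commutator $ab - ba$ of the principal symbols. Under the hypothesis that the principal symbols commute, $ab - ba = 0$, so this leading term vanishes and the expansion of $c - \tilde c$ begins at order $\abs{\alpha}=1$; every surviving term lies in $\mathbf{S}^{r_{1}+r_{2}-\abs{\alpha}} \subseteq \mathbf{S}^{r_{1}+r_{2}-1}$. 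This yields $[A,B] \in \OPS{r_{1}+r_{2}-1}(\RR^{d},\RR^{k})$.

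\textbf{Main obstacle.} The genuinely technical step is justifying that the oscillatory integral defining $c$ converges and that the Taylor remainder after truncating at order $N$ genuinely belongs to $\mathbf{S}^{r_{1}+r_{2}-N}$ (rather than merely being formally of that order). This requires the standard integration-by-parts argument in the $(y,\eta)$ variables: inserting the operators $\langle\eta\rangle^{-2M}(1-\Delta_{y})^{M}$ and $\langle y\rangle^{-2M}(1-\Delta_{\eta})^{M}$ to gain decay, then bounding the resulting integrand using the seminorm estimates of $\mathbf{S}^{r_{1}}$ and $\mathbf{S}^{r_{2}}$ uniformly in $(x,\xi)$, and finally differentiating under the integral sign in $x$ and $\xi$ to verify the remainder's own symbol estimates. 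Since the class $\mathbf{S}^{r}$ in the excerpt is precisely the Hörmander class $S^{r}_{1,0}$ adapted to matrix-valued symbols, this is exactly the content of \cite[Theorem 1.2.4]{ES1994}, and I would invoke that reference for the remainder estimates rather than reproducing the integration-by-parts bookkeeping in full.
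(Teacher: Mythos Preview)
Your proposal is correct and is precisely the standard symbolic-calculus argument; note, however, that the paper does not supply its own proof of this lemma at all but simply cites it as \cite[Theorem 1.2.4]{ES1994}, the very reference you invoke for the remainder estimates. So your approach is not merely compatible with the paper's---it \emph{is} the content of the cited result, spelled out in more detail than the paper itself provides.
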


\begin{rem}
  In particular taking for $B$ the differential operator $D_{i} = \frac{\partial}{\partial x_{i}}$, we get that
  \begin{equation*}
    [A,D_{i}] \in \OPS{r_{1}}(\RR^{d},\RR^{k}),
  \end{equation*}
  and taking for $B$ the multiplication operator with some function $f\in \CScR$, we get that
  \begin{equation*}
    [A,f] \in \OPS{r_{1}-1}(\RR^{d},\RR^{k}).
  \end{equation*}
\end{rem}

In order to prove the existence and smoothness of the spray on the extended Hilbert manifolds $\DM{q}$, we will need an \emph{ellipticity condition} on the inertia operator $A$. For our purpose, we will adopt the following definition

\begin{defn}
  A pseudo-differential operator
  \begin{equation*}
    A = a(x,D) \in \OPS{r}(\RR^{d},\RR^{d})
  \end{equation*}
  is called \emph{elliptic} if its symbol $a(x,\xi)\in\mathrm{GL}(\CC^{d})$ and
  \begin{equation*}
    \norm{[a(x,\xi)]^{-1}} \lesssim \left( 1 + \abs{\xi}^{2}\right)^{-r/2},
  \end{equation*}
  for all $x, \xi \in \Rd$.
\end{defn}

\begin{rem}
  An elliptic pseudo-differential operator in $\OPS{r}(\RR^{d},\RR^{d})$ induces a \emph{bounded isomorphism} between $\HRd{q}$ and $\HRd{q-r}$ for all $q\in \RR$.
\end{rem}

We summarize our considerations by introducing the following class of inertia operators which will be denoted by $\mathcal{E}^{r}(\RR^{d})$.

\begin{defn}\label{def:class-Er}
  An operator $A\in \mathcal{L}(\HRd{\infty})$ is in the class $\mathcal{E}^{r}(\RR^{d})$ iff the following conditions are satisfied:
  \begin{enumerate}
    \item $A = a(x,D) \in \OPS{r}(\RR^{d},\RR^{d})$;
    \item $A = a(x,D)$ is elliptic;
    \item Its symbol, $a(x,\xi)$ is Hermitian and positive definite for all $\xi \in \Rd$.
  \end{enumerate}
\end{defn}

\subsection{Pseudo-differential operators on a vector bundle}

We shall first recall the definition of a pseudo-differential operator acting on functions defined on a closed orientable manifold $M$ of dimension $d$. We follow closely~\cite{Hoe2007a} (see also~\cite{ES1994}) and start with the way we pullback operators.

\begin{defn}
  Consider a chart $(\mathcal{U},\kappa)$ on $M$, with $\kappa: \mathcal{U} \to U$ (where $U$ is an open set in $\RR^{d}$), then the pullback $\kappa^{*} P:\mathrm{C}_c^{\infty}(\mathcal{U})\to \mathrm{C}^{\infty}(\mathcal{U}) $
  of a linear operator $P:\mathrm{C}_c^{\infty}(U)\to \mathrm{C}^{\infty}( U)$ is
  defined by:
  \begin{equation*}
    (\kappa^{*}P)f: = P(f\circ\kappa^{-1})\circ \kappa ,\hspace{0,5cm}f\in \mathrm{C}_c^{\infty}(\mathcal{U}).
  \end{equation*}
\end{defn}

\begin{defn}\label{def:Pscalar}
  A bounded linear operator $A: C^{\infty}(M) \to C^{\infty}(M)$ is a pseudo-differential operator in the class $\OPS{r}(M)$ if for every local chart $(\mathcal{U}, \kappa)$, with $\kappa:\mathcal{U}\to U\subset \RR^{d}$, there exists a pseudo-differential operator $A_{U}\in \OPS{r}(\RR^{d})$ such that if $\Phi, \Psi\in C_c^{\infty}(\mathcal{U})$ then:
  \begin{equation*}
    \Phi A \Psi f = \kappa^{*}(\varphi A_{U} \psi)f,
  \end{equation*}
  where $\varphi: = \Phi\circ \kappa^{-1}$, $\psi: = \Psi\circ \kappa^{-1}$, and $f\in C^{\infty}(M)$.
\end{defn}

\begin{rem}
  This means that the local representative of the function $\Phi A \Psi f$ is obtained applying the operator $\varphi A_{U}\psi$ to the local representative of $f$:
  \begin{equation*}
    (\Phi A \Psi f)\circ \kappa^{-1}(x) = \varphi A_{U}\psi (f\circ\kappa^{-1})(x), \ \ \ x\in U.
  \end{equation*}
\end{rem}

Consider now $E_{M},F_{M}$ two complex vector bundles on $M$, of ranks $d_{1}$ and $d_{2}$. We denote by:
\begin{equation*}
  \Phi_{\mathcal{U}}:E_{\mathcal{U}}\rightarrow \mathcal{U}\times \CC^{d_{1}},\quad \Psi_{\mathcal{U}}:F_{\mathcal{U}}\rightarrow \mathcal{U}\times \CC^{d_{2}},
\end{equation*}
the corresponding local trivialization over the same open set $\mathcal{U}\subset M$. We can pullback vector-valued functions, using $(\chi^{E})^{*}: C^{\infty}(U,\CC^{d_{1}}) \rightarrow C^{\infty}(\mathcal{U},E_{\mathcal{U}})$, defined as:
\begin{equation}\label{eq:pullback}
  (\chi^{E})^{*}u: = (\pr_{2}\circ \Phi_{\mathcal{U}})^{-1}( u\circ \kappa),\ \ \ \ u\in C^{\infty}(U,\CC^{d_{1}}),
\end{equation}
and push-forward local sections, by:
\begin{equation}\label{eq:pushforward}
  (\chi^{E})_*v: = \pr_{2}\circ \Phi_{\mathcal{U}}( v\circ \kappa^{-1}), \quad\quad v\in C^{\infty}(\mathcal{U},E_{\mathcal{U}}).
\end{equation}
In a similar manner, with the aforementioned scalar case, we will pullback linear operators $P: C_c^{\infty}(U,\CC^{d_{1}})\rightarrow C^{\infty}(U,\CC^{d_{2}})$ to obtain linear operators $\chi^{*}P:C_c^{\infty}(\mathcal{U},E_{\mathcal{U}})\rightarrow C^{\infty}(\mathcal{U},F_{\mathcal{U}})$, using:
\begin{equation*}
  ({\chi}^{*}P)v = (\chi^F)^{*}P((\chi^E)_{*}v),\qquad v\in C_c^{\infty}(\mathcal{U},E_{\mathcal{U}})\,.
\end{equation*}

\begin{defn}[Pseudo-differential operators on complex vector bundles]\label{def:P-complex}
  We say that a linear operator $A:\Gamma(E_{M})\rightarrow\Gamma(F_{M})$ is a pseudo-differential operator of class $\OPS{r}(M, E_{M}, F_{M})$ if it satisfies the conditions from the scalar case for $A_{U}$ with symbol in $\mathbf{S}^{r}(\RR^{d}\times \RR_{d},\mathrm{M}_{d_{2}\times d_{1}}(\mathbb C))$, and the pullback operator defined above.
\end{defn}

The tangent bundle $TM$ can be considered as a real subbundle of its complexification $TM^c: = TM\otimes \CC$. For the complex vector bundle $TM^c$ we pullback functions $u\in C^{\infty}(U,\CC^{d})$ using $(\chi^{TM^c})^{*}u: = (\pr_{2}\circ T\kappa^c)^{-1}(u\circ \kappa)$ and push-forward with $(\chi^{TM^c})_*v: = \pr_{2}\circ T\kappa^c(v\circ \kappa^{-1})$, where $T\kappa^c$ is the complexification of the tangent mapping $T\kappa$. By the way $TM$ embeds in $TM^c$ one has $\displaystyle T\kappa^c\big|_{TM} = T\kappa$.

Thus one can pullback linear operators to obtain operators on $C^{\infty}_c(\mathcal{U},T\mathcal{U})$. In order to define a pseudo-differential operator on a real vector bundle we have to assure that it sends real vector-valued functions to real vector-valued functions.

\begin{defn}[Elliptic Pseudo-differential operators on $TM$]
  We say that a linear operator $A:\Gamma(TM)\rightarrow\Gamma(TM)$ is a pseudo-differential operator of class $\OPS{r}(M,TM)$
  if for every local chart $(\mathcal{U}, \kappa)$, with $\kappa:\mathcal{U}\to U\subset \RR^{d}$, there exists a pseudo-differential operator $A_{U}$ with a Hermitian symbol in $\mathbf{S}^{r}(\RR^{d}\times \RR^{d},\mathrm{M}_{d}(\mathbb C))$ and such that for every pair of real functions $\Phi, \Psi\in C_c^{\infty}(\mathcal{U})$ one has:
  \begin{equation*}
    \Phi A \Psi v = \chi^{*}(\varphi A_{U} \psi)v,\qquad v\in \Gamma(M).
  \end{equation*}
  If all $A_{U}$ are elliptic, then we call the operator $A$ elliptic and we write $A\in \mathcal{E}^{r}(M,TM)$.
\end{defn}

\begin{rem}
  The properties of the local representatives $A_{U}$ having Hermitian symbols and being elliptic are preserved under a change of coordinates and thus this notion of elliptic pseudo-differential operators on $TM$ is well-defined. Furthermore, an elliptic pseudo-differential operator of class in $ \mathcal{E}^{r}(M,TM)$ induces a \emph{bounded isomorphism} between $H^{q}(M,TM)$ and $H^{q-r}(M,TM)$ for all $q\in \RR$.
\end{rem}

\section{Conjugates of pseudo-differential operators}
\label{sec:smoothness-conjugates-pseudo-diff}

In this part we will study the smoothness of conjugation for pseudo-differential operators. To prove the local well-posedness of the EPDiff equation we will need this result in the context of operators on vector bundles. We will however start by considering the problem in the more simpler situation of operators acting on functions on $\RR^{d}$.

\subsection{Conjugates of a pseudo-differential operator on $\RR^{d}$}
\label{subsec:smoothness-conjugates-Rd}

Let $A$ be a continuous linear operator from $\HRd{\infty}$ to itself and let
\begin{equation}\label{eq:twisted_map}
  A_{\varphi} : = R_{\varphi} \circ A \circ R_{\varphi^{-1}},
\end{equation}
where $R_{\varphi}v = v \circ \varphi$ and $\varphi \in \DiffRd$. Since $\DiffRd$ is a Fréchet Lie group with Lie algebra $\HRd{\infty}$, the mapping
\begin{equation*}
  (\varphi,v) \mapsto A_{\varphi}v, \qquad \DiffRd \times \HRd{\infty} \to \HRd{\infty}
\end{equation*}
is smooth. It could be interesting to note here that related observations have been made in~\cite[Proposition 1.3]{AS1971} and~\cite[Proposition 2.3]{MSS2013}. Nevertheless, these considerations are useless for our purpose, since we need a smoothness argument on Hilbert approximation manifolds. More precisely, the aim of this section is to prove the following theorem.

\begin{thm}\label{thm:smoothness-of-conjugates}
  Let $r \ge 1$ and $A = a(x,D) \in \OPS{r}(\RR^{d},\RR^{d})$ with a Hermitian symbol compactly supported in $x$. Then the mapping
  \begin{equation*}
    \varphi \mapsto A_{\varphi} : = R_{\varphi}A R_{\varphi^{-1}}, \quad \DRd{s} \to \mathcal{L}(\HRd{q},\HRd{q-r})
  \end{equation*}
  is smooth for $q > 1 + d/2$ and $q \ge r$.
\end{thm}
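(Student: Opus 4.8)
The plan is to differentiate the map $\varphi \mapsto A_\varphi$ in the parameter $\varphi$ and to find an explicit formula for its $n$-th derivative that exhibits $d^n(A_\varphi)$ as a finite sum of compositions of operators, each of which we can show is a bounded operator $\HRd{q}\to\HRd{q-r}$ depending continuously on $\varphi$. Since $\DRd{s}$ is an open subset of the Hilbert space $\HRd{s}$, smoothness means: for each $n$ the map $\varphi \mapsto d^n(A_\varphi)\in L^n(\HRd{s};\mathcal{L}(\HRd{q},\HRd{q-r}))$ exists and is continuous. The first computation is formal: writing $\varphi_t$ for a curve through $\varphi$ with $\partial_t\varphi_t = X\circ\varphi_t$ (equivalently a right-trivialised tangent vector $X$), one gets $\partial_t(R_{\varphi_t}) = R_{\varphi_t}\circ(X\cdot\nabla)$ acting on the right and, differentiating $A_\varphi = R_\varphi A R_{\varphi^{-1}}$, an expression of the shape
\begin{equation*}
  \partial_t A_{\varphi_t} = R_{\varphi_t}\bigl[\,(X\cdot\nabla)\,A - A\,(X\cdot\nabla)\,\bigr]R_{\varphi_t^{-1}}
  = R_{\varphi_t}\,[X\cdot\nabla,\,A]\,R_{\varphi_t^{-1}}.
\end{equation*}
By Lemma~\ref{lem:pseudodiff-commutators} and the subsequent remark, $[X\cdot\nabla, A] = \sum_i X^i[D_i,A] + (\text{mult. by }\partial_i X^i)\circ A$ lies in $\OPS{r}(\RR^d,\RR^d)$ when $X$ is, say, smooth compactly supported — but here $X\in\HRd{s}$ only has Sobolev regularity, so the commutator is not literally a pseudo-differential operator of the admissible symbol class. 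This is the crux: one must show that an operator of the form (smooth compactly-supported pseudo-differential kernel)$\times$(Sobolev multiplier), together with the outer conjugation by $R_{\varphi^{-1}}, R_\varphi$, is still bounded $\HRd{q}\to\HRd{q-r}$ with norm controlled by $\norm{X}_{H^s}$ times the $\mathcal{L}$-norm of a fixed pseudo-differential operator.

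The strategy for that is to peel the derivatives and commutators until everything is expressed through three building blocks: (i) the fixed pseudo-differential operators $A$, $[D_i, A]$, $[D_i,[D_j,A]],\dots$, all of which lie in $\OPS{r}(\RR^d,\RR^d)$ with symbol compactly supported in $x$, hence bounded $H^q\to H^{q-r}$ for the relevant $q$; (ii) multiplication operators by functions built from derivatives of the components of $\varph$ (entries of $d\varphi$, of $(d\varphi)^{-1}$, Jacobian determinants, etc.), which one controls via the pointwise-multiplication estimate Lemma~\ref{lem:pointwise-multiplication} and the inverse-Jacobian Lemma~\ref{lem:inverse-Jacobian-smoothness} — here the hypothesis $q>1+d/2$ and $q\ge r$ is what makes the relevant Sobolev spaces algebras and makes multiplication bounded on $H^{q-r}$; and (iii) the composition operators $R_\varphi$ and $R_{\varphi^{-1}}$, which are bounded on each $H^p$ with $0\le p\le s$, locally uniformly in $\varphi$, by Lemma~\ref{lem:composition-regularity}. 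Inductively one proves a formula
\begin{equation*}
  d^n(A_\bullet)_\varphi(X_1,\dots,X_n)
  = \sum_{\text{finite}} R_\varphi\circ M_{g_1}\circ B_1\circ M_{g_2}\circ B_2\circ\cdots\circ M_{g_k}\circ B_k\circ R_{\varphi^{-1}},
\end{equation*}
where each $B_j$ is one of the fixed pseudo-differential operators above and each $M_{g_j}$ is multiplication by a function $g_j$ that is a universal polynomial expression in the components of the $X_i$, of $d\varphi$, of $(d\varphi)^{-1}$ and of $1/J_\varphi$; crucially each $g_j$ lies in $H^{q-r}$ (or better) and depends continuously — indeed smoothly — on $(\varphi,X_1,\dots,X_n)$. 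The technical bookkeeping needed to keep every intermediate Sobolev index in the admissible range $[0,s]$ for the composition operators and $\ge q-r$ for the pseudo-differential factors, uniformly over the finitely many terms, is exactly what I would expect to be the laborious part; this is presumably where the appendix of Sobolev-boundedness estimates is invoked.

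With such a formula in hand, continuity of $\varphi\mapsto d^n(A_\bullet)_\varphi$ follows term by term from the continuity statements in Lemmas~\ref{lem:pointwise-multiplication}, \ref{lem:inverse-Jacobian-smoothness} and \ref{lem:composition-regularity} (composition of continuous maps, boundedness of multilinear operations), and local boundedness of the $\mathcal{L}$-norm follows from the local boundedness of $R_\varphi$ and $R_{\varphi^{-1}}$ near a given $\varphi$. To justify that $d^n$ really is the derivative of $d^{n-1}$ and not merely a formal candidate, I would run a standard induction: show that the candidate formula for $d^nA_\varphi$ is continuous (just argued), then apply the converse-to-Taylor criterion (a map whose formal derivatives exist and are continuous is $C^\infty$) — i.e. verify that $d^{n-1}A_{\varphi_t}$ is $C^1$ with derivative the claimed $d^nA$, using the fundamental theorem of calculus in the Banach space $L^{n-1}(\HRd{s};\mathcal{L}(\HRd{q},\HRd{q-r}))$. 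The base case $n=0$ is just boundedness of $A_\varphi$, which follows by writing $A_\varphi = R_\varphi A R_{\varphi^{-1}}$ and combining the $L^2$-boundedness theorem for $A\in\OPS{r}$ with Lemma~\ref{lem:composition-regularity}; note that the compact support of the symbol in $x$ is what lets us treat $A$ itself, as opposed to only its commutators with derivatives, as a genuinely bounded operator with the mapping property needed here. The main obstacle, to restate it, is not the induction machinery but step two: obtaining the explicit expansion of the iterated commutators $[\,X_n\cdot\nabla,[\dots,[X_1\cdot\nabla, A]\dots]]$ conjugated by $R_\varphi$ into the "multiplier $\circ$ fixed-$\Psi$DO" normal form, with all Sobolev indices verified to lie in the admissible ranges — i.e. proving the requisite higher-order Kato–Ponce / commutator estimates referred to in the introduction.
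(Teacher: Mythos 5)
Your overall architecture coincides with the paper's: differentiate $\varphi\mapsto A_\varphi$, use right-invariance to write the $n$-th derivative as $R_\varphi A_n R_{\varphi^{-1}}$ with $A_n$ the iterated-commutator multilinear operator computed at the identity, and reduce smoothness to boundedness of each $A_n$ in $\mathcal{L}^{n+1}(\HRd{q},\HRd{q-r})$ via a converse-to-Taylor induction (this is exactly the Smoothness Lemma and Proposition~\ref{prop:An-boundedness}). The genuine gap is your claimed normal form. You assert that $d^n(A_\bullet)_\varphi$ decomposes into finitely many compositions $R_\varphi\circ M_{g_1}\circ B_1\circ\cdots\circ M_{g_k}\circ B_k\circ R_{\varphi^{-1}}$ in which every $B_j$ is a \emph{fixed} pseudo-differential operator of order $r$ (an iterated $\ad_D$ of $A$) and every $M_{g_j}$ is multiplication by a Sobolev function, each factor separately bounded with the right mapping property. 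No such decomposition exists. Already for $n=1$ the derivative contains $[u^j,A]D_j$: expanding the commutator gives $M_{u^j}\circ A\circ D_j-A\circ M_{u^j}\circ D_j$, and each summand only maps $H^{q}\to H^{q-r-1}$. For general $n$ the irreducible blocks are iterated commutators $[f_1,[f_2,\dotsc,[f_m,P]\dotsb]]$ with $P\in\OPS{r+m-1}(\RR^{d},\RR^{d})$ and $f_j$ merely in $H^{q}$; these gain $m$ orders \emph{only through cancellation}, which any splitting into ``multiplier $\circ$ fixed $\Psi$DO'' factors destroys. The paper therefore keeps these commutators intact (the Type~II terms of Corollary~\ref{cor:An-splitting}), and the whole analytic content is the dedicated estimate
\begin{equation*}
\norm{S_{m,P}(f_1,\dotsc,f_m)w}_{H^{q-r}}\lesssim \norm{f_1}_{H^{q}}\dotsb\norm{f_m}_{H^{q}}\norm{w}_{H^{q-1}}
\end{equation*}
of Theorem~\ref{thm:boundedness-lemma}, proved by Fourier-transforming the compactly supported symbol in $x$ to reduce to Fourier multipliers and deriving an explicit finite-difference formula for the symbol of the iterated commutator, estimated by convexity and the mean value theorem. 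You correctly sense that higher-order Kato--Ponce estimates are needed, but the structure you propose is incompatible with proving or applying them: the estimate must be established for the commutator block as a whole, not assembled from estimates on its expanded summands.

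A secondary point: because the derivative is taken at the identity and transported by right translation, no entries of $d\varphi$, $(d\varphi)^{-1}$ or $1/J_\varphi$ occur in the operators $A_n$ --- only the components of the right-trivialised variations $u_i$ and their first derivatives enter, via the recurrence \eqref{eq:recurrence-relation}. Your inclusion of Jacobian factors suggests differentiating the composition directly rather than exploiting the group structure; that route is not wrong in principle, but it complicates the bookkeeping without removing the need for the commutator estimate above.
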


To solve this problem, it was observed in~\cite{EK2014} that the $n$-th partial (Gâteaux) derivative of~\eqref{eq:twisted_map}, in the smooth category, was given by:
\begin{equation*}
  \partial^{n}_{\varphi}A_{\varphi} (v,\delta\varphi_{1}, \dotsc ,\delta\varphi_{n}) = R_{\varphi}A_{n}R_{\varphi}^{-1}(v,\delta\varphi_{1}, \dotsc ,\delta\varphi_{n}),
\end{equation*}
where
\begin{equation*}
  A_{n}: = \partial^{n}_{\id}A_{\varphi} \in \mathcal{L}^{n+1}(\HRd{\infty},\HRd{\infty})
\end{equation*}
is the $(n+1)$-linear operator defined inductively by $A_{0} = A$ and
\begin{multline}\label{eq:recurrence-relation}
  A_{n+1}(u_{0},u_{1}, \dotsc , u_{n+1}) = \nabla_{u_{n+1}} \left(A_{n}(u_{0}, u_{1}, \dotsc , u_{n}) \right)
  \\
  - \sum_{k = 0}^{n} A_{n}(u_{0}, u_{1}, \dotsc ,\nabla_{u_{n+1}} u_{k}, \dotsc , u_{n}),
\end{multline}
where $\nabla$ is the canonical derivative on $\RR^{d}$. When $d = 1$, a nice formula for $A_{n}$ was obtained in~\cite{Cis2016a}. The strategy of the proof is then the same as the one explained in~\cite{EK2014,BEK2015,Kol2017}, which reduces the problem to show that each $A_{n}$ extends to a bounded $(n+1)$-linear operator from $\HRd{q}$ to $\HRd{q-r}$. More precisely, we have the following result, which will be stated without proof as it has already been proven in~\cite{EK2014,BEK2015,Kol2017}.

\begin{rem}\label{rem:fundamental-observation}
  In particular, for $n = 1$, we get
  \begin{equation*}
    \partial_{\varphi}A_{\varphi}(v,\delta\varphi) = R_{\varphi}A_{1}R_{\varphi}^{-1}(v,\delta\varphi), \quad \text{where} \quad A_{1}(u_{0},u_{1}) : = \left[ \nabla_{u_{1}},A \right]u_{0}.
  \end{equation*}
  Such a formula is still true if $(M,g)$ is a compact Riemannian manifold. In that case, the notation $\partial_{t}v$ should be replaced by $\overline{\nabla}_{\partial_{t}}v$, where the connection $\overline{\nabla}$ defined on $T\Diff(M)$ is induced by the connection $\nabla^{g}$ on $M$, see~\cite{FG1989,Bru2018,BHM2019}, and defined by
  \begin{equation*}
    \left(\overline{\nabla}_{\partial_{t}}v\right)(t,x) : = \nabla^{g}_{\varphi_{t}(t,x)}v(\cdot,x),
  \end{equation*}
  where $v(t)$ is a vector field on $T\Diff(M)$ defined along the curve $\varphi(t)$ on $\Diff(M)$. Furthermore, in that case, one can naturally construct a connection $\widetilde\nabla$ on the vector bundle $L(T\Diff(M),T\Diff(M))$ such that:
  \begin{equation*}
    \left(\widetilde{\nabla}_{\partial_{t}} A_{\varphi(t)}\right)v = \left[ \overline{\nabla}_{\partial_{t}},A_{\varphi(t)} \right]v.
  \end{equation*}
  Note, that in the notation of \cite{BHM2019} there is no distinction between the two covariant derivatives $\overline{\nabla}$ and $\widetilde{\nabla}$.
\end{rem}

\begin{lem}[Smoothness Lemma]\label{thm:smoothness_{l}emma}
  Let
  \begin{equation*}
    A : \HRd{\infty} \to \HRd{\infty}
  \end{equation*}
  be a continuous linear operator. Given $q > 1 + d/2$ with $q \ge r$, suppose that $A$ extends to a bounded linear operator from $\HRd{q}$ to $\HRd{q-r}$. Then
  \begin{equation*}
    \varphi \mapsto A_{\varphi}: = R_{\varphi} \circ A \circ R_{\varphi^{-1}} , \quad \DRd{s} \to \mathcal{L}(\HRd{q},\HRd{q-r})
  \end{equation*}
  is smooth, if and only if, each operator $A_{n}$ defined by \eqref{eq:recurrence-relation}, extends to a bounded $(n+1)$-linear operator in $\mathcal{L}^{n+1}(\HRd{s},\HRd{q-r})$.
\end{lem}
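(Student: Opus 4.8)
The plan is to prove the nontrivial implication---that boundedness of all the $A_n$ forces smoothness of $\varphi\mapsto A_\varphi$---the converse being immediate: the $n$-th Fréchet derivative at $\id$ of a $C^\infty$ map $\DRd{s}\to\mathcal L(\HRd q,\HRd{q-r})$ is a bounded $(n+1)$-linear map, and on the dense subgroup $\DiffRd$ it already coincides with $A_n$ by the smooth-category formula recalled above (at $\varphi=\id$ one has $R_{\id}=\id$), so $A_n$ extends boundedly. For the forward direction I would write
$g_n(\varphi)(v,h_1,\dots,h_n):=R_\varphi\,A_n\!\bigl(R_{\varphi^{-1}}v,R_{\varphi^{-1}}h_1,\dots,R_{\varphi^{-1}}h_n\bigr)$,
so that $g_0(\varphi)=A_\varphi$ and, by \cite{EK2014} and Remark~\ref{rem:fundamental-observation}, on the Fréchet--Lie group $\DiffRd$ the map $\varphi\mapsto g_n(\varphi)$ is $C^\infty$ with $Dg_n=g_{n+1}$; the recurrence~\eqref{eq:recurrence-relation} is precisely the bookkeeping of this differentiation, in which the term from the outer $R_\varphi$ and those from the inner $R_{\varphi^{-1}}$'s recombine through $\nabla_U\!\circ A-A\circ\nabla_U=[\nabla_U,A]$. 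The task is then to transport this picture from $\DiffRd$ to the Hilbert manifold $\DRd{s}$.

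The first and decisive step is a fundamental theorem of calculus on $\DRd{s}$: for $\varphi\in\DRd{s}$ and $h\in\HRd{s}$ small enough that the segment $\varphi+\tau h$ stays in $\DRd{s}$,
\begin{equation*}
  g_n(\varphi+h)v-g_n(\varphi)v=\int_0^1 g_{n+1}(\varphi+\tau h)(v,h)\,d\tau\qquad\text{in }\HRd{q-r}
\end{equation*}
for every $v\in\HRd q$. On $\DiffRd$ this is a triviality (there $g_n$ is smooth with derivative $g_{n+1}$); to push it to $\DRd{s}$ I would use that $\DiffRd$ is dense in $\DRd{s}$ and that, for fixed $v$, both sides are continuous in $(\varphi,h)$---the integrand $g_{n+1}(\varphi+\tau h)(v,h)\in\HRd{q-r}$ being jointly continuous by the continuity of composition and inversion (Lemma~\ref{lem:composition-regularity}) together with the assumed boundedness of $A_{n+1}$, and the uniform bound $\norm{g_{n+1}(\varphi+\tau h)(v,h)}_{\HRd{q-r}}\lesssim\norm{v}_{\HRd q}\norm{h}_{\HRd s}$ (locally in $\varphi$) legitimising the passage to the limit under the integral.

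Granting this, the remainder is soft. Continuity of each $g_n$ into $\mathcal L(\HRd q,\HRd{q-r})$ follows from $\norm{g_n(\varphi_1)-g_n(\varphi_2)}\le\sup_{\tau}\norm{g_{n+1}\bigl(\varphi_2+\tau(\varphi_1-\varphi_2)\bigr)}\cdot\norm{\varphi_1-\varphi_2}_{\HRd s}$, the supremum being locally bounded since $\norm{R_\psi}$ and $\norm{R_{\psi^{-1}}}$ are (Lemma~\ref{lem:composition-regularity}) and $A_{n+1}$ is bounded; so each $g_n$ is locally Lipschitz. Replacing $h$ by $th$ in the identity above and letting $t\to0$ then shows, using this continuity of $g_{n+1}$, that $g_n$ has Gâteaux derivative $h\mapsto g_{n+1}(\varphi)(\cdot,h)$, which is bounded linear in $h$ and continuous in $\varphi$; a map with continuous Gâteaux derivative is $C^1$, so $g_n\in C^1$ with $Dg_n=g_{n+1}$ for all $n$. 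Bootstrapping ($Dg_0=g_1\in C^1$, hence $g_0\in C^2$, and so on) yields $g_0:\varphi\mapsto A_\varphi\in C^\infty$, as claimed.

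The hard part is the fundamental theorem of calculus step: one must make sense of, and justify, the chain rule for $\varphi\mapsto R_\varphi A R_{\varphi^{-1}}$ on the Sobolev-completed group $\DRd{s}$, on which inversion $\varphi\mapsto\varphi^{-1}$ is only continuous, never smooth. The point---and the reason the recurrence~\eqref{eq:recurrence-relation} has the form it does---is that the derivative that would be lost upon formally differentiating $R_{\varphi^{-1}}$ is absorbed by the commutator structure, so that $A_{n+1}$ is again an operator of order $r$ and its boundedness into $\HRd{q-r}$ is a genuine, verifiable condition; with that in hand one never differentiates the inversion, and only the \emph{continuity} of composition and inversion (Lemma~\ref{lem:composition-regularity}), together with a density argument from the smooth category, is needed to run the whole proof.
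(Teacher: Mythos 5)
The paper states this lemma without proof, deferring to \cite{EK2014,BEK2015,Kol2017}, and your argument is a faithful reconstruction of the proof given there: the fundamental-theorem-of-calculus identity $g_n(\varphi+h)v-g_n(\varphi)v=\int_0^1 g_{n+1}(\varphi+\tau h)(v,h)\,d\tau$ established first on the dense smooth subgroup and then transported to $\mathcal{D}^{q}(\RR^{d})$ using only the continuity and local boundedness of composition and inversion together with the assumed boundedness of the $A_n$, followed by the locally-Lipschitz/Gâteaux-to-Fréchet bootstrap. Your treatment of the two delicate points — that operator-norm (not merely strong) continuity of $g_{n+1}$ is what upgrades the Gâteaux derivative to $C^1$, and that one never needs to differentiate the inversion because the commutator structure of the recurrence absorbs the lost derivative — is exactly right, so the proposal is correct and takes essentially the intended approach.
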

Therefore, the proof of Theorem~\ref{thm:smoothness-of-conjugates} reduces to prove the following result.

\begin{prop}\label{prop:An-boundedness}
  Let $r \ge 1$, $A = a(X,D) \in \OPS{r}(\RR^{d},\RR^{d})$ and $A_{n}$ be the $n$-linear operator defined inductively by~\eqref{eq:recurrence-relation}. Then, each $A_{n}$ extends to a bounded multi-linear operator
  \begin{equation*}
    A_{n} \in \mathcal{L}^{n+1}(\HRd{q},\HRd{q-r})
  \end{equation*}
  for $q > 1 + d/2$ and $q \ge r$.
\end{prop}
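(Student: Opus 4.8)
I would argue by induction on $n$. The case $n = 0$ is the hypothesis that $A = A_0 \in \OPS{r}$ is bounded from $\HRd{q}$ to $\HRd{q-r}$, which holds by the $L^2$-boundedness theorem. The whole difficulty is that the recurrence~\eqref{eq:recurrence-relation} loses, at first sight, one derivative at each step, since $\nabla_{u_{n+1}}$ is first-order; so $A_n$ can only land in $\HRd{q-r}$, for a threshold on $q$ independent of $n$, if the apparent top-order contributions cancel. The mechanism to exploit is the one behind commutators of pseudo-differential operators with differential operators: if $B \in \OPS{\rho}$ and $v$ is a (sufficiently regular) vector field, then $[\nabla_v, B] = \nabla_v B - B\nabla_v$, though a priori of order $\rho + 1$, is in fact of order $\rho$, because $\nabla_v = \sum_j v^j D_j$ has a scalar principal symbol commuting with that of $B$ (Lemma~\ref{lem:pseudodiff-commutators}(2) and the remark after it), while a commutator with a multiplication operator lowers the order by one.

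\textbf{Expansion of $A_n$.} First I would establish, by induction using~\eqref{eq:recurrence-relation}, the Jacobi identity and the identity $[\nabla_v, \nabla_w] = \nabla_{[v,w]}$, that $A_n(u_0, u_1, \dots, u_n)$ is a finite linear combination of terms
\begin{equation*}
  \big[\nabla_{w_1}, \big[\nabla_{w_2}, \big[\,\cdots, [\nabla_{w_m}, A]\cdots\big]\big]\big]\, u_0,
\end{equation*}
where each $w_i$ is built from $u_1, \dots, u_n$ by iterated directional derivatives and Lie brackets, each of $u_1, \dots, u_n$ being consumed exactly once. The structural gain, inherited from the base case, is that every such iterated commutator still has the order $r$ of $A$, the successive commutations with the first-order operators $\nabla_{w_i}$ being free at the level of orders. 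I expect the genuinely delicate point here to be combinatorial: one must check that the expansion closes and, above all, that after collecting terms the cancellations are strong enough that no surviving $w_i$ is differentiated more than once — otherwise its components would fail to lie in the multiplicative algebra $H^{q-1}$. (In the Fourier-multiplier setting of~\cite{EK2014,BEK2015,Kol2017} the commutators $[D_j, A]$ vanish and this bookkeeping is lighter; its appearance here is the new feature.)

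\textbf{Boundedness of the terms.} It then remains to bound each term of the expansion, multilinearly, from $\HRd{q}$ to $\HRd{q-r}$. I would do so by expanding the iterated commutator through $\nabla_{w_i} = \sum_j w_i^j D_j$ into a finite sum of compositions of: the pseudo-differential operators $A$, $[D_j, A]$, $[D_j,[D_k,A]], \dots$, all in $\OPS{r}$; the coordinate derivatives $D_j$; and multiplications by the $w_i^j$'s and their first derivatives. Each composition is then controlled by the mapping properties of pseudo-differential operators ($A, [D_j,A]\colon H^q \to H^{q-r}$), the pointwise multiplication estimate of Lemma~\ref{lem:pointwise-multiplication} — applicable since $q > d/2$, $q-1 > d/2$ and $0 \le q - r \le q$, which is where $r \ge 1$ and $q \ge r$ enter — and, most importantly, a commutator estimate of Kato--Ponce type: for $A \in \OPS{r}$ with $r \ge 1$ and $f$ only of Sobolev regularity, $[f, A]$ extends to a bounded operator $H^{q-1} \to H^{q-r}$ with norm controlled by a Sobolev norm of $f$ (it ``has order $r-1$''), together with the analogues for the iterated commutators $[f_1,[f_2,[\cdots,A]\cdots]]$ and for mixed commutators with the $D_j$. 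These estimates are collected in Appendix~\ref{sec:Sobolev-boundedness}.

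\textbf{Main obstacle.} I expect the hard part to be exactly these rough commutator estimates. When $f$ (or a coefficient $(\nabla_{u_i}u_k)^j$) is only of class $H^{q-1}$ with $q-1 > d/2$, it need not be $C^1$, so the classical Kato--Ponce inequality does not apply directly; one must instead work with symbols of finite $x$-regularity, or with a Littlewood--Paley/paraproduct decomposition, keeping precise track of which factor carries which derivative — the symbolic identities of Lemma~\ref{lem:pseudodiff-commutators} only dispose of the smooth part of the symbol. Together with the combinatorial check of the previous step that the expansion closes with the right cancellations, this is where essentially all of the work lies; granting both, the induction closes routinely.
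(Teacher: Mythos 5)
Your strategy is essentially the paper's: an inductive normal form for $A_{n}$ (Lemma~\ref{lem:Rec-Sn} and Corollary~\ref{cor:An-splitting}, which decompose $A_{n}$ into compositions of multiplication operators, the operators $\ad_{D}^{\alpha}P$, iterated commutators $S_{m,P}$ with functions, and single derivatives $D_{i}$), followed by the multiplication lemma and the iterated-commutator estimate of Theorem~\ref{thm:boundedness-lemma} from the appendix --- exactly the two ingredients you identify. The one point you flag as delicate (that after collecting terms no argument is differentiated more than once) is precisely what the paper's normal form secures without any a posteriori cancellation: by expanding $\nabla_{u}=\sum_{j}u^{j}D_{j}$ from the outset and checking that $\Rec^{j}$ preserves the Type I/II form (using $\Rec^{j}(M_{n})=0$ and the Leibniz rule~\eqref{eq:Rec-QR} for $\Rec^{j}$ of a composition), the feared second-derivative terms never appear.
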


Before entering the details of the proof, it may be useful to rather think of $A_{n}$ as a $n$-linear mapping
\begin{equation*}
  \HRd{\infty} \times \dotsb \times \HRd{\infty} \to \mathcal{L}(\HRd{\infty},\HRd{\infty})
\end{equation*}
and write
\begin{equation*}
  A_{n}(u_{0},u_{1}, \dotsc , u_{n}) = A_{n}(u_{1}, \dotsc , u_{n})u_{0}.
\end{equation*}
The recurrence relation~\eqref{eq:recurrence-relation} rewrites then accordingly as:
\begin{multline}\label{eq:def-Rec}
  \Rec (A_{n})(u_{1}, \dotsc , u_{n+1}) : = [\nabla_{u_{n+1}}, A_{n}(u_{1}, \dotsc , u_{n})] \\
  - \sum_{k = 1}^{n} A_{n}(u_{1}, \dotsc ,\nabla_{u_{n+1}} u_{k}, \dotsc , u_{n}).
\end{multline}

\begin{rem}
  When $d = 1$ and $A$ commutes with $D : = d/dx$, the following nice formula for $A_{n}$ was obtained in~\cite{Cis2016a}:
  \begin{equation*}
    A_{n}(u_{1}, \dotsc , u_{n}) = [u_{1},[u_{2},[ \dotsb [ u_{n}, D^{n-1}A]\dotsb]]]D, \qquad n \ge 1 \,.
  \end{equation*}
\end{rem}

It may also be worth to recall the following general rules for commutators
\begin{equation}\label{eq:Leibniz}
  [AB,C] = A[B,C] + [A,C]B \qquad \text{(Leibniz identity)},
\end{equation}
and
\begin{equation}\label{eq:Jacobi}
  [A,[B,C]] + [B,[C,A]] + [C,[A,B]] = 0 \qquad \text{(Jacobi identity)}.
\end{equation}
Finally, we will introduce the following notations.

\begin{enumerate}
  \item Given $f_{1}, \dotsc , f_{n} \in \HR{\infty}$, the multiplication operator by $f_{1}\dotsb f_{n}$ will be denoted by
        \begin{equation*}
          M_{n}(f_{1}, \dotsc , f_{n}).
        \end{equation*}
  \item Given a linear operator $P$ on $\HRd{\infty}$ and a multi-index
        \begin{equation*}
          \alpha : = (\alpha_{1}, \dotsc , \alpha_{d}),
        \end{equation*}
        we define
        \begin{equation*}
          \ad_{D}^{\alpha} P : = \ad_{D_{1}}^{\alpha_{1}} \ad_{D_{2}}^{\alpha_{2}} \dotsb \ad_{D_d}^{\alpha_d}P
        \end{equation*}
        where
        \begin{equation*}
          \ad_{D_{j}}^{\alpha_{j}}P : = \underbrace{[D_{j}, [D_{j} \dotsb [D_{j}, P]\dotsb]}_\text{$\alpha_{j}$ times}, \quad j = \overline{1,d}.
        \end{equation*}
  \item Given a linear operator $P$ on $\HRd{\infty}$ and $f_{1}, \dotsc , f_{n} \in \HR{\infty}$, we define
        \begin{equation*}
          S_{n,P}(f_{1},f_{2}, \dotsc , f_{n}) : = [f_{1},[f_{2}, \dotsb [f_{n}, P]\dotsb]]
        \end{equation*}
        for $n \ge 1$ and $S_{0,P} : = P$.
\end{enumerate}

\begin{rem}
  It should be observed that $M_{n}$ and $S_{n,P}$ are $n$-linear and totally symmetric in $(f_{1}, \dotsc , f_{n})$. For $S_{n,P}$, this is due to the Jacobi identity~\eqref{eq:Jacobi}. Besides, any expression like $\ad_{D_{i_{1}}} \ad_{D_{i_{2}}} \dotsb \ad_{D_{i_{n}}}P$ can be rewritten as $\ad_{D}^{\alpha}P$ for some multi-index $\alpha$, by virtue of the Jacobi identity.
\end{rem}

Since the canonical connection on $\RR^{d}$ writes as
\begin{equation*}
  \nabla_{u} = \sum_{j = 1}^{d} u^{j}D_{j},
\end{equation*}
where $D_{j} : = \partial/\partial x^{j}$, we shall also introduce the following recurrence relation for linear operators $P_{n}(f_{1},f_{2}, \dotsc , f_{n})$ on $\HRd{\infty}$, depending linearly on $f_{1}, \dotsc , f_{n}$ in $\HR{\infty}$:
\begin{multline}\label{eq:def-Rec-k}
  \Rec^{j} (P_{n})(f_{1}, \dotsc , f_{n+1}) : = [f_{n+1}D_{j}, P_{n}(f_{1}, \dotsc , f_{n})] \\
  - \sum_{k = 1}^{n} P_{n}(f_{1}, \dotsc ,f_{n+1}\partial_{j} f_{k}, \dotsc , f_{n}).
\end{multline}

\begin{lem}\label{lem:Rec-Sn}
  For any $n\geq 0$ and $P \in \mathcal{L}(\HRd{\infty},\HRd{\infty})$, we have
  \begin{multline*}
    \Rec^{j}(S_{n,P})(f_{1},\dotsc , f_{n+1}) = M_{1}(f_{n+1}) \circ S_{n,[D_{j},P]}(f_{1},\dotsc , f_{n})
    \\
    + S_{n+1,PD_{j}}(f_{1},\dotsc , f_{n+1}) + S_{n,P}(f_{1},\dotsc f_{n}) \circ M_{1}(\partial_{j}f_{n+1}).
  \end{multline*}
\end{lem}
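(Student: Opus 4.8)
My plan is to unwind the definition \eqref{eq:def-Rec-k} of $\Rec^{j}$ and reduce the whole identity to three elementary facts about the operators $S_{n,P}$, each of which I would establish first by a short induction on the number of arguments. The only tools needed are the Leibniz rule \eqref{eq:Leibniz}, the Jacobi identity \eqref{eq:Jacobi}, the relation $[D_{j}, M_{1}(f)] = M_{1}(\partial_{j}f)$, and the key bookkeeping simplification that multiplication operators commute with one another, so that a bracket of the form $[M_{1}(f), S_{n,P}(\dotsc)]$ never produces cross terms when the other factor is itself a multiplication operator.

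The three auxiliary identities are as follows. First, the commutator of $S_{n,P}$ with $D_{j}$,
\begin{equation*}
  [D_{j},\, S_{n,P}(f_{1},\dotsc , f_{n})] = S_{n,[D_{j},P]}(f_{1},\dotsc , f_{n}) + \sum_{k=1}^{n} S_{n,P}(f_{1},\dotsc ,\partial_{j}f_{k},\dotsc , f_{n}),
\end{equation*}
obtained by peeling off the outermost bracket and applying Jacobi with $[D_{j},f_{1}] = \partial_{j}f_{1}$. Second, a Leibniz rule in a single slot, valid because the $f_{i}$ are commuting multiplication operators,
\begin{equation*}
  S_{n,P}(\dotsc , f\,g,\dotsc ) = M_{1}(f)\circ S_{n,P}(\dotsc , g,\dotsc ) + S_{n,P}(\dotsc , f,\dotsc )\circ M_{1}(g).
\end{equation*}
Third, a rule for pulling $D_{j}$ out of $S_{m,QD_{j}}$, obtained from $[M_{1}(g),D_{j}] = -M_{1}(\partial_{j}g)$ by induction on $m$,
\begin{equation*}
  S_{m,\,QD_{j}}(g_{1},\dotsc , g_{m}) = S_{m,Q}(g_{1},\dotsc , g_{m})\circ D_{j} - \sum_{k=1}^{m} S_{m-1,Q}(g_{1},\dotsc ,\widehat{g_{k}},\dotsc , g_{m})\circ M_{1}(\partial_{j}g_{k}),
\end{equation*}
where $\widehat{g_{k}}$ indicates omission.

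With these in hand the computation is short. Writing $T := S_{n,P}(f_{1},\dotsc , f_{n})$ and splitting $f_{n+1}D_{j} = M_{1}(f_{n+1})\circ D_{j}$, the Leibniz rule \eqref{eq:Leibniz} turns \eqref{eq:def-Rec-k} into
\begin{equation*}
  \Rec^{j}(S_{n,P})(f_{1},\dotsc , f_{n+1}) = M_{1}(f_{n+1})\circ[D_{j},T] + [f_{n+1},T]\circ D_{j} - \sum_{k=1}^{n} S_{n,P}(f_{1},\dotsc , f_{n+1}\partial_{j}f_{k},\dotsc , f_{n}).
\end{equation*}
I then substitute the first identity into $[D_{j},T]$; use total symmetry of $S_{n+1,P}$ to write $[f_{n+1},T] = S_{n+1,P}(f_{1},\dotsc , f_{n+1})$; and expand the last sum via the second identity, with the product $f_{n+1}\partial_{j}f_{k}$ sitting in the $k$-th slot. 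The contributions $M_{1}(f_{n+1})\sum_{k}S_{n,P}(f_{1},\dotsc ,\partial_{j}f_{k},\dotsc , f_{n})$ coming from the two sides cancel, leaving $M_{1}(f_{n+1})\circ S_{n,[D_{j},P]}(f_{1},\dotsc ,f_{n})$ together with $S_{n+1,P}(f_{1},\dotsc , f_{n+1})\circ D_{j}$ minus a sum of terms $S_{n,P}(\dotsc)\circ M_{1}(\partial_{j}f_{k})$ carrying $f_{n+1}$ in the $k$-th slot. Applying the third identity with $Q=P$ and $m=n+1$, and again using symmetry to identify $S_{n,P}(f_{1},\dotsc ,\widehat{f_{k}},\dotsc , f_{n+1})$ with the term carrying $f_{n+1}$ in the $k$-th slot (its $k=n+1$ contribution being $S_{n,P}(f_{1},\dotsc , f_{n})\circ M_{1}(\partial_{j}f_{n+1})$), these last groups collapse to $S_{n+1,PD_{j}}(f_{1},\dotsc , f_{n+1}) + S_{n,P}(f_{1},\dotsc , f_{n})\circ M_{1}(\partial_{j}f_{n+1})$, which is the asserted formula.

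There is no genuine obstacle here; the entire proof is bookkeeping. The two points that require care are (i) establishing the three auxiliary identities first, so that the main step becomes a one-shot substitution rather than a nested expansion, and (ii) exploiting the total symmetry of $S_{n,P}$ to recognize that terms which look different — for instance $S_{n,P}$ with its $k$-th argument deleted versus $S_{n,P}$ with $f_{n+1}$ inserted into the $k$-th slot — are in fact equal, which is precisely what makes the cancellations close up.
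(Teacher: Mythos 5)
Your proposal is correct and follows essentially the same route as the paper: the same two key identities (your first and third auxiliary identities are precisely the paper's generalized Jacobi and generalized Leibniz relations \eqref{eq:generalized-Jacobi} and \eqref{eq:generalized-Leibniz}, proved by the same inductions), the same splitting $f_{n+1}D_{j} = M_{1}(f_{n+1})\circ D_{j}$ via the Leibniz identity, and the same pair of cancellations. The only cosmetic difference is that you isolate the single-slot product rule for $S_{n,P}$ as a separate named identity, where the paper applies it silently in one step.
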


\begin{proof}
  The proof is based on the following two relations
  \begin{multline}\label{eq:generalized-Leibniz}
    S_{n+1,PD_{j}}(f_{1},\dotsc , f_{n+1}) = S_{n+1,P}(f_{1},\dotsc , f_{n+1})\circ D_{j}
    \\
    - \sum_{k = 1}^{n+1} S_{n,P}(f_{1},\dotsc , \hat{f}_{k}, \dotsc , f_{n+1}) \circ M_{1}(\partial_{j}f_{k})
  \end{multline}
  and
  \begin{multline}\label{eq:generalized-Jacobi}
    [D_{j},S_{n,P}(f_{1},\dotsc , f_{n})] = S_{n,[D_{j},P]}(f_{1},\dotsc , f_{n})
    \\
    + \sum_{k = 1}^{n} S_{n,P}(f_{1},\dotsc , \partial_{j}f_{k}, \dotsc , f_{n}),
  \end{multline}
  which proofs are direct by induction, using the Leibniz identity for the first one and the Jacobi identity for the second one. We have therefore
  \begin{multline*}
    \Rec^{j}(S_{n,P})(f_{1},\dotsc , f_{n+1}) = [f_{n+1}D_{j},S_{n,P}(f_{1},\dotsc , f_{n})]
    \\
    - \sum_{k = 1}^{n} S_{n,P}(f_{1},\dotsc , f_{n+1}\partial_{j}f_{k}, \dotsc , f_{n}),
  \end{multline*}
  which can be rewritten, using the Leibniz identity, as
  \begin{multline*}
    M_{1}(f_{n+1}) \circ [ D_{j} , S_{n,P}(f_{1},\dotsc , f_{n})] + [f_{n+1} , S_{n,P}(f_{1},\dotsc , f_{n})] \circ D_{j}
    \\
    - \sum_{k = 1}^{n} M_{1}(f_{n+1}) \circ S_{n,P}(f_{1},\dotsc , \partial_{j}f_{k}, \dotsc , f_{n})
    \\
    - \sum_{k = 1}^{n} S_{n,P}(f_{1},\dotsc , \hat{f}_{k}, \dotsc , f_{n+1}) \circ M_{1}(\partial_{j}f_{k}) .
  \end{multline*}
  Now, using~\eqref{eq:generalized-Leibniz} and \eqref{eq:generalized-Jacobi}, we obtain
  \begin{multline*}
    M_{1}(f_{n+1}) \circ S_{n,[D_{j},P]}(f_{1},\dotsc , f_{n}) + S_{n+1,PD_{j}}(f_{1},\dotsc , f_{n+1})
    \\
    + S_{n,P}(f_{1},\dotsc , f_{n}) \circ M_{1}(\partial_{j}f_{n+1}),
  \end{multline*}
  which achieves the proof.
\end{proof}

\begin{cor}\label{cor:An-splitting}
  Let $n \ge 1$. Then, $A_{n}(u_{1}, \dotsc , u_{n})$ is a sum of terms of the following two types, where $f_{i}$ stands for some component $u_{\sigma(i)}^{k_{i}}$ of $u_{\sigma(i)}$ and $\sigma$ is a permutation of $\set{1, \dotsc , n}$. The first type (Type I) writes
  \begin{equation}\label{eq:type1-terms}
    P_{n}^{1}(f_{1}, \dotsc , f_{n}) : = M_{n}(f_{1}, \dotsc , f_{n}) \circ \ad_D^{\alpha}P,
  \end{equation}
  where $P \in \OPS{r}(\RR^{d},\RR^{d})$. The second type (Type II) writes
  \begin{multline}\label{eq:type2-terms}
    P_{n}^{2}(f_{1}, \dotsc , f_{n}) : = M_{m_{1}}(f_{1}, \dotsc , f_{m_{1}}) \circ S_{m_{2},P}(f_{m_{1}+1}, \dotsc , f_{m_{1}+m_{2}}) \\
    \circ M_{m_{3}}(\partial_{p_{1}} f_{m_{1}+m_{2}+1}, \dotsc , \partial_{p_{m_{3}}}f_{m_{1}+m_{2}+m_{3}}) \circ D_{i},
  \end{multline}
  where $P \in \OPS{r+m_{2}-1}(\RR^{d},\RR^{d})$, and $m_{1}+m_{2}+m_{3} = n$.
\end{cor}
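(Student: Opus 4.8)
The plan is to argue by induction on $n$, peeling off one application of the recurrence \eqref{eq:def-Rec} at a time. Since the canonical connection splits as $\nabla_{u}=\sum_{j=1}^{d}u^{j}D_{j}$, expanding \eqref{eq:def-Rec} in components shows that, once $A_{n}$ has been written as a sum of terms $P_{n}(f_{1},\dots,f_{n})$ in which each argument is a component of one of the $u_{i}$ (each $u_{i}$ occurring exactly once by multilinearity, which is why $\sigma$ is a permutation), the contribution of such a term to $A_{n+1}$ is precisely $\sum_{j=1}^{d}\Rec^{j}(P_{n})(f_{1},\dots,f_{n},u_{n+1}^{j})$, with $\Rec^{j}$ the recurrence \eqref{eq:def-Rec-k}. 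It therefore suffices to verify the base case $n=1$, which is the direct computation $A_{1}(u_{1})=[\nabla_{u_{1}},A]=\sum_{j}\bigl(M_{1}(u_{1}^{j})\circ\ad_{D_{j}}A+S_{1,A}(u_{1}^{j})\circ D_{j}\bigr)$, displaying one Type I and one Type II summand, and then to check that $\Rec^{j}$ maps each Type I and each Type II term to a finite sum of Type I and Type II terms. The tools throughout are the Leibniz and Jacobi identities \eqref{eq:Leibniz}, \eqref{eq:Jacobi}, the elementary identities $[D_{j},M_{1}(g)]=M_{1}(\partial_{j}g)$, $[M_{1}(g),Q]=S_{1,Q}(g)$ and $\ad_{D_{j}}\ad_{D}^{\alpha}P=\ad_{D}^{\alpha+e_{j}}P$, and the order-preservation facts $[D_{j},P]\in\OPS{s}$, $[g,P]\in\OPS{s-1}$ for $P\in\OPS{s}$ from the remark following Lemma~\ref{lem:pseudodiff-commutators}.

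For a Type I term $P_{n}^{1}(f_{1},\dots,f_{n})=M_{n}(f_{1},\dots,f_{n})\circ Q$ with $Q:=\ad_{D}^{\alpha}P\in\OPS{r}$, the Leibniz identity gives $[f_{n+1}D_{j},M_{n}\circ Q]=M_{1}(f_{n+1})[D_{j},M_{n}]Q+M_{1}(f_{n+1})M_{n}[D_{j},Q]+M_{n}\circ[M_{1}(f_{n+1}),Q]\circ D_{j}$. The middle term equals $M_{n+1}(f_{1},\dots,f_{n+1})\circ\ad_{D}^{\alpha+e_{j}}P$, a Type I term; the last equals $M_{n}(f_{1},\dots,f_{n})\circ S_{1,Q}(f_{n+1})\circ D_{j}$, a Type II term with $(m_{1},m_{2},m_{3})=(n,1,0)$ and operator $Q\in\OPS{r}=\OPS{r+m_{2}-1}$; and the first term, which after reordering mutually commuting multiplication operators is $\sum_{k}M_{n}(f_{1},\dots,\widehat{f_{k}},\dots,f_{n},f_{n+1})\circ M_{1}(\partial_{j}f_{k})\circ Q$, cancels exactly against the substitution sum $-\sum_{k}P_{n}^{1}(f_{1},\dots,f_{n+1}\partial_{j}f_{k},\dots,f_{n})$ of \eqref{eq:def-Rec-k}. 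Hence $\Rec^{j}(P_{n}^{1})$ is one Type I plus one Type II term.

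For a Type II term written as $P_{n}^{2}=B_{1}\circ B_{2}\circ B_{3}\circ D_{i}$, with $B_{1}=M_{m_{1}}(f_{1},\dots,f_{m_{1}})$, $B_{2}=S_{m_{2},P}(f_{m_{1}+1},\dots,f_{m_{1}+m_{2}})$ and $B_{3}=M_{m_{3}}(\partial_{p_{1}}f_{m_{1}+m_{2}+1},\dots,\partial_{p_{m_{3}}}f_{n})$, I would expand $[f_{n+1}D_{j},B_{1}B_{2}B_{3}D_{i}]$ by Leibniz into the four terms in which $f_{n+1}D_{j}$ is commuted past exactly one factor, and split the substitution sum of \eqref{eq:def-Rec-k} according to which block contains the argument $f_{k}$. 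Pairing each block with its share of the substitution terms: the $B_{1}$-block cancels identically, as in the Type I case, because $M_{1}(f_{n+1})$ commutes with $B_{1}$; the $B_{3}$-block, after one expands $\partial_{p_{\ell}}(f_{n+1}\partial_{j}f_{k})$ by the product rule, contributes a second-order part that cancels the commutator $M_{1}(f_{n+1})[D_{j},B_{3}]$ and a surviving first-order part which is Type II with $m_{3}\mapsto m_{3}+1$; the trailing $D_{i}$ contributes $[f_{n+1}D_{j},D_{i}]=-M_{1}(\partial_{i}f_{n+1})D_{j}$, again Type II with $m_{3}\mapsto m_{3}+1$ and trailing factor $D_{j}$; and the $B_{2}$-block together with its substitution terms is exactly $B_{1}\circ\Rec^{j}(S_{m_{2},P})\circ B_{3}D_{i}$, which Lemma~\ref{lem:Rec-Sn} rewrites as $B_{1}\bigl(M_{1}(f_{n+1})S_{m_{2},[D_{j},P]}+S_{m_{2}+1,PD_{j}}+S_{m_{2},P}M_{1}(\partial_{j}f_{n+1})\bigr)B_{3}D_{i}$ — three Type II terms with $m_{1}\mapsto m_{1}+1$ (operator $[D_{j},P]\in\OPS{r+m_{2}-1}$), $m_{2}\mapsto m_{2}+1$ (operator $PD_{j}\in\OPS{(r+m_{2}-1)+1}=\OPS{r+(m_{2}+1)-1}$), and $m_{3}\mapsto m_{3}+1$, respectively. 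In each case $m_{1}+m_{2}+m_{3}$ increases by one to $n+1$, which closes the induction.

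The main obstacle is the combinatorial bookkeeping in the Type II step: one must arrange the many summands so the cancellations between the commutator part and the substitution part of $\Rec^{j}$ are transparent, and simultaneously keep the pseudo-differential order of the operator sitting in the conjugation block in step with the index $m_{2}$ through each shift, so that the constraint $P\in\OPS{r+m_{2}-1}$ is preserved. The most delicate piece is the derivative-multiplication block $B_{3}$, where one must deliberately produce second-order derivative terms from the substitution rule and verify that they cancel against the commutator, leaving only the first-order contribution that fits the Type II pattern. No analytic input is needed beyond the algebraic identities listed above, Lemma~\ref{lem:Rec-Sn}, and the order bookkeeping from the remark after Lemma~\ref{lem:pseudodiff-commutators}.
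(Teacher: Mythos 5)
Your proof is correct and follows essentially the same route as the paper: induction on $n$ starting from $A_{1}(u_{1})=\sum_{j}\bigl(u_{1}^{j}[D_{j},A]+[u_{1}^{j},A]D_{j}\bigr)$, the derivation property of $\Rec^{j}$ across the composition blocks (the paper's identity \eqref{eq:Rec-QR}), Lemma~\ref{lem:Rec-Sn} for the conjugation block, and the order bookkeeping from Lemma~\ref{lem:pseudodiff-commutators}. If anything, your treatment of the derivative-multiplication block $B_{3}$ is more careful than the published argument, which asserts that $\Rec^{j}(P_{n}^{2})$ consists of only the two contributions coming from $S_{m_{2},P}$ and from $D_{i}$ --- tacitly treating $\Rec^{j}$ of the block $M_{m_{3}}(\partial_{p_{1}}f_{\cdot},\dotsc)$ as zero --- whereas, as you observe, the second-order derivatives cancel but a first-order remainder $-\sum_{\ell}M_{m_{3}+1}(\dotsc,\partial_{j}f_{m_{1}+m_{2}+\ell},\dotsc,\partial_{p_{\ell}}f_{n+1})$ survives; it is still of Type~II with $m_{3}\mapsto m_{3}+1$, so the statement is unaffected, but your argument accounts for it explicitly.
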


\begin{proof}
  We will prove the Lemma by induction on $n \ge 1$. For $n = 1$, we get
  \begin{equation*}
    A_{1}(u_{1}) = \sum_{j = 1}^{d} [u_{1}^{j}D_{j},A] = \sum_{j = 1}^{d} \left( u_{1}^{j}[D_{j},A] + [u_{1}^{j},A]D_{j}\right)
  \end{equation*}
  so we are done. Suppose now that the result holds for some $n \ge 1$, so that $A_{n}$ is a sum of terms of type I and II. Then $A_{n+1} = \Rec(A_{n})$ is a sum of terms $\Rec^{j}(P_{n}^{1})$ and $\Rec^{j}(P_{n}^{2})$ for $j = 1, \dotsc, d$. Observe, moreover, that if $n = p+q$ and
  \begin{equation*}
    P_{n}(f_{1}, \dotsc , f_{n}) = Q_{p}(f_{1}, \dotsc , f_{p}) \circ R_{q}(f_{p+1}, \dotsc , f_{n}),
  \end{equation*}
  then, due to the Leibniz rule~\eqref{eq:Leibniz}, we have
  \begin{equation}\label{eq:Rec-QR}
    \Rec^{j}(Q_{p}\circ R_{q}) = \Rec^{j}(Q_{p})\circ R_{q} + Q_{p}\circ \Rec^{j}(R_{q}).
  \end{equation}
  Now, a direct computation shows that $\Rec^{j}(M_{n}) = 0$ for all $n \ge 1$. We get thus
  \begin{equation*}
    \Rec^{j}(P_{n}^{1})(f_{1}, \dotsc , f_{n+1}) = M_{n}(f_{1}, \dotsc , f_{n}) \circ \Rec^{j}(\ad_D^{\alpha}P)(f_{n+1}).
  \end{equation*}
  But
  \begin{equation*}
    \Rec^{j}(\ad_D^{\alpha}P)(f_{n+1}) = f_{n+1}[D_{j},\ad_D^{\alpha}P] + [f_{n+1},\ad_D^{\alpha}P]D_{j},
  \end{equation*}
  and hence
  \begin{multline*}
    \Rec^{j}(P_{n}^{1})(f_{1}, \dotsc , f_{n+1}) = M_{n+1}(f_{1}, \dotsc , f_{n+1}) \circ \ad_D^{(\alpha_{1}, \dotsc , \alpha_{j} +1, \dotsc , \alpha_{d})}P
    \\
    + M_{n}(f_{1}, \dotsc , f_{n}) \circ S_{1,\ad_D^{\alpha}P}(f_{n+1}) \circ D_{j},
  \end{multline*}
  is a sum of operators of type I~\eqref{eq:type1-terms} and II~\eqref{eq:type2-terms} but of order $n+1$, because
  \begin{equation*}
    P \in \OPS{r}(\RR^{d},\RR^{d}) \implies \ad_D^{\alpha}P \in \OPS{r}(\RR^{d},\RR^{d}),
  \end{equation*}
  by Lemma~\ref{lem:pseudodiff-commutators}. Next, $\Rec^{j}(P_{n}^{2})(f_{1}, \dotsc , f_{n+1})$ is the sum of two terms. The first one
  \begin{multline*}
    M_{m_{1}}(f_{1}, \dotsc , f_{m_{1}}) \circ \Rec^{j}(S_{m_{2},P})(f_{m_{1}+1}, \dotsc , f_{m_{1}+m_{2}}, f_{n+1}) \\
    \circ M_{m_{3}}(\partial_{p_{1}} f_{m_{1}+m_{2}+1}, \dotsc , \partial_{p_{m_{3}}}f_{m_{1}+m_{2}+m_{3}}) \circ D_{i}
  \end{multline*}
  can be rewritten, due to Lemma~\ref{lem:Rec-Sn}, as
  \begin{align*}
     & M_{m_{1}+1}(f_{1}, \dotsc , f_{m_{1}}, f_{n+1}) \circ S_{m_{2},[D_{j},P]}(f_{m_{1}+1}, \dotsc , f_{m_{1}+m_{2}})
    \\
     & \quad \circ M_{m_{3}}(\partial_{p_{1}} f_{m_{1}+m_{2}+1}, \dotsc , \partial_{p_{m_{3}}}f_{m_{1}+m_{2}+m_{3}}) \circ D_{i}
    \\
     & + M_{m_{1}}(f_{1}, \dotsc , f_{m_{1}}) \circ S_{m_{2}+1,PD_{j}}(f_{m_{1}+1}, \dotsc , f_{m_{1}+m_{2}}, f_{n+1})
    \\
     & \quad \circ M_{m_{3}}(\partial_{p_{1}} f_{m_{1}+m_{2}+1}, \dotsc , \partial_{p_{m_{3}}}f_{m_{1}+m_{2}+m_{3}}) \circ D_{i}
    \\
     & + M_{m_{1}}(f_{1}, \dotsc , f_{m_{1}}) \circ S_{m_{2},P}(f_{m_{1}+1}, \dotsc , f_{m_{1}+m_{2}})
    \\
     & \quad \circ M_{m_{3}+1}(\partial_{p_{1}} f_{m_{1}+m_{2}+1}, \dotsc , \partial_{p_{m_{3}}}f_{m_{1}+m_{2}+m_{3}},\partial_{j}f_{n+1}) \circ D_{i}.
  \end{align*}
  The second one
  \begin{multline*}
    M_{m_{1}}(f_{1}, \dotsc , f_{m_{1}}) \circ S_{m_{2},P}(f_{m_{1}+1}, \dotsc , f_{m_{1}+m_{2}}) \\
    \circ M_{m_{3}}(\partial_{p_{1}} f_{m_{1}+m_{2}+1}, \dotsc , \partial_{p_{m_{3}}}f_{m_{1}+m_{2}+m_{3}}) \circ \Rec^{j}(D_{i})(f_{n+1}),
  \end{multline*}
  recasts as
  \begin{multline*}
    - M_{m_{1}}(f_{1}, \dotsc , f_{m_{1}}) \circ S_{m_{2},P}(f_{m_{1}+1}, \dotsc , f_{m_{1}+m_{2}}) \\
    \circ M_{m_{3}+1}(\partial_{p_{1}} f_{m_{1}+m_{2}+1}, \dotsc , \partial_{p_{m_{3}}}f_{m_{1}+m_{2}+m_{3}},\partial_{i}f_{n+1}) \circ D_{j},
  \end{multline*}
  since
  \begin{equation*}
    \Rec^{j}(D_{i})(f_{n+1}) = - M_{1}(\partial_{i}f_{n+1}) \circ D_{j}.
  \end{equation*}
  In both cases, these expressions are sums of operators of type I and II but of order $n+1$, because
  \begin{equation*}
    P \in \OPS{r+m_{2}-1}(\RR^{d},\RR^{d}) \implies [D_{j},P] \in \OPS{r+m_{2}-1}(\RR^{d},\RR^{d}),
  \end{equation*}
  and
  \begin{equation*}
    P \in \OPS{r+m_{2}-1}(\RR^{d},\RR^{d}) \implies PD_{j} \in \OPS{r+m_{2}}(\RR^{d},\RR^{d}),
  \end{equation*}
  by Lemma~\ref{lem:pseudodiff-commutators}. This achieves the proof.
\end{proof}

\begin{proof}[Proof of Proposition~\ref{prop:An-boundedness}]
  We have to show that each operator $A_{n}$ extends to a bounded operator in
  \begin{equation*}
    \mathcal{L}^{n}(H^{q}(\RR^{d},\RR^{d}),\mathcal{L}(\HRd{q},\HRd{q-r})).
  \end{equation*}
  By corollary~\ref{cor:An-splitting}, this reduces to show that each operator of type I~\eqref{eq:type1-terms} or II~\eqref{eq:type2-terms} extends to a bounded operator in
  \begin{equation*}
    \mathcal{L}^{n}(H^{q}(\RR^{d},\RR),\mathcal{L}(\HRd{q},\HRd{q-r})).
  \end{equation*}
  Let $f_{1}, \dotsc , f_{n} \in \CScR$ and $w \in \HRd{\infty}$. For an operator of type I, we get by Lemma~\ref{lem:pointwise-multiplication}
  \begin{equation*}
    \begin{split}
      \norm{P_{n}^{1}(f_{1}, \dotsc , f_{n})w}_{H^{q-r}} & \lesssim \norm{\prod\limits_{k = 1}^n f_{k}}_{H^{q}} \cdot \norm{\ad_D^{\alpha}Pw}_{H^{q-r}}
      \\
      & \lesssim \norm{f_{1}}_{H^{q}} \dotsb \norm{f_{n}}_{H^{q}}\cdot \norm{w}_{H^{q}},
    \end{split}
  \end{equation*}
  since $\ad_D^{\alpha}P \in \OPS{r}(\RR^{d},\RR^{d})$ and $\HR{q}$ is a multiplicative algebra ($q > 1 + d/2$). Consider now an operator of type II~\eqref{eq:type2-terms} and set
  \begin{equation*}
    W : = M_{m_{3}}(\partial_{p_{1}} f_{m_{1}+m_{2}+1}, \dotsc , \partial_{p_{m_{3}}}f_{m_{1}+m_{2}+m_{3}}) D_{i}w,
  \end{equation*}
  so that
  \begin{equation*}
    \norm{W}_{H^{q-1}} \lesssim \left(\prod_{k = 1}^{m_{3}} \norm{f_{{m_{1}+m_{2}+k}}}_{H^{q}}\right) \cdot \norm{w}_{H^{q}},
  \end{equation*}
  because $q > 1 + d/2$ and $\HR{q-1}$ is a multiplicative algebra. We get thus, by Lemma~\ref{lem:pointwise-multiplication} and Theorem~\ref{thm:boundedness-lemma}
  \begin{equation*}
    \begin{split}
      \norm{P_{n}^{2}(f_{1}, \dotsc , f_{n})w}_{H^{q-r}} & \lesssim \left(\prod_{k = 1}^{m_{1}} \norm{f_{k}}_{H^{q}}\right) \cdot \norm{S_{m_{2},P}(f_{m_{1}+1}, \dotsc , f_{m_{1}+m_{2}}) W }_{H^{q-r}}
      \\
      & \lesssim \left(\prod_{k = 1}^{m_{1}+m_{2}} \norm{ f_{k}}_{H^{q}} \right) \cdot \norm{W}_{H^{q-1}}
      \\
      & \lesssim \left(\prod_{k = 1}^{n} \norm{ f_{k}}_{H^{q}}\right) \cdot \norm{w}_{H^{q}},
    \end{split}
  \end{equation*}
  which achieves the proof, since $\CScR$ is dense in $\HR{q}$.
\end{proof}

\subsection{Conjugates of a pseudo-differential operator acting on functions}
\label{subsec:smoothness-conjugates-functions}

In this part, we aim to use the previously developed theory to prove the following theorem concerning the conjugation of a pseudo-differential operator $A$ acting on functions defined on a compact manifold $M$, by a diffeomorphism of $M$.

\begin{thm}\label{thm:conj-functions}
  Let $A$ be a pseudo-differential operator of class $\OPS{r}(M)$ with $r \geq 1$. Then the map
  \begin{equation*}
    \mathcal{D}^{q}(M) \to L(H^{q}(M), H^{q-r}(M))\,,\quad
    \varphi \mapsto A_{\varphi}
  \end{equation*}
  is smooth for $q \geq r$ and $q > d/2 + 1$.
\end{thm}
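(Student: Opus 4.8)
The strategy is to reduce Theorem~\ref{thm:conj-functions} to the already-established Euclidean result, Theorem~\ref{thm:smoothness-of-conjugates}, by localizing with a partition of unity. Fix a finite cover of $M$ by charts $(\mathcal{U}_\alpha,\kappa_\alpha)$ with $\kappa_\alpha:\mathcal{U}_\alpha\to U_\alpha\subset\RR^d$, and a subordinate partition of unity $\set{\varrho_\alpha}$, together with functions $\chi_\alpha\in C_c^\infty(\mathcal{U}_\alpha)$ which are identically $1$ on a neighborhood of $\operatorname{supp}\varrho_\alpha$. The key point is that smoothness of $\varphi\mapsto A_\varphi$ as a map into $L(H^q(M),H^{q-r}(M))$ is a local statement: using the cover of $M$ to define the Sobolev norms as in Section~\ref{sec:notations}, it suffices to show that for each $\alpha$ the map $\varphi\mapsto \varrho_\alpha\cdot(A_\varphi u)$, expressed in the chart $\kappa_\alpha$, depends smoothly on $\varphi\in\DM{q}$ with values in $L(H^q(M),H^{q-r}(\RR^d))$.

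The first real step is an \emph{invariance-of-support} reduction. For $\varphi$ close to $\id$ in $\DM{q}$, $\varphi$ maps $\operatorname{supp}\varrho_\alpha$ into a slightly larger set still contained in $\set{\chi_\alpha=1}$, so on the range of $R_{\varphi^{-1}}$ applied to functions supported near $\operatorname{supp}\varrho_\alpha$ one may freely insert cutoffs: $\varrho_\alpha A_\varphi = \varrho_\alpha R_\varphi (\chi_\alpha A \chi_\alpha) R_{\varphi^{-1}} u$ whenever $u$ is first multiplied by $\varrho_\alpha$, because the tails of $\varrho_\alpha$ and the off-diagonal part $\chi_\alpha A(1-\chi_\alpha)$ are smoothing and can be absorbed. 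Since $\varphi\mapsto R_\varphi$ is not smooth on its own, the point is that the composition $\varrho_\alpha R_\varphi\,(\chi_\alpha A\chi_\alpha)\,R_{\varphi^{-1}}\varrho_\alpha$ is; this is where Definition~\ref{def:Pscalar} enters: $\chi_\alpha A\chi_\alpha = \kappa_\alpha^*(\varphi_\alpha A_{U_\alpha}\psi_\alpha)$ for a genuine Euclidean pseudo-differential operator $A_{U_\alpha}\in\OPS{r}(\RR^d)$, and after transporting via $\kappa_\alpha$ the conjugation $R_{\hat\varphi_\alpha}\,(\varphi_\alpha A_{U_\alpha}\psi_\alpha)\,R_{\hat\varphi_\alpha^{-1}}$ is exactly of the form treated in Theorem~\ref{thm:smoothness-of-conjugates} (the symbol is compactly supported in $x$ because of the cutoffs $\varphi_\alpha,\psi_\alpha$), where $\hat\varphi_\alpha:=\kappa_\alpha\circ\varphi\circ\kappa_\alpha^{-1}$ is the local representative of $\varphi$. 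One must check $\hat\varphi_\alpha$ depends smoothly on $\varphi$: for $\varphi$ near $\id$ this is the smoothness of the chart transition, and extending $\hat\varphi_\alpha$ off $U_\alpha$ to an element of $\DRd{q}$ can be done because we only need its germ near $\operatorname{supp}\varphi_\alpha$.

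The second step handles a \emph{general} $\varphi\in\DM{q}$, not just $\varphi$ near $\id$. Here one uses that any $\varphi_0\in\DM{q}$ can be connected to $\id$ and that right translation by a \emph{smooth} diffeomorphism is smooth (Lemma~\ref{lem:composition-regularity} combined with smoothness of the group operations when one factor is smooth): writing $\varphi = \psi\circ\varphi_0$ with $\varphi_0\in\DiffM$ fixed and $\psi$ near $\id$ gives $A_\varphi = R_{\varphi_0}\, (A_{\varphi_0^{-1}\cdot\,})_\psi\, R_{\varphi_0^{-1}}$ after rearrangement; but $A_{\varphi_0}$ is again a pseudo-differential operator of the same order (conjugation by a smooth diffeomorphism preserves $\OPS{r}(M)$), so one reduces to a neighborhood of $\id$ with a new inertia operator in the same class. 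Since the bound in Theorem~\ref{thm:smoothness-of-conjugates} is via the Smoothness Lemma and the local estimates of Proposition~\ref{prop:An-boundedness} hold uniformly for symbols in a bounded subset of $\mathbf{S}^r$, this patching is legitimate.

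\textbf{Main obstacle.} The delicate point is not the pseudo-differential estimates — those are inherited wholesale from Section~\ref{subsec:smoothness-conjugates-Rd} — but the bookkeeping that makes the localization \emph{smooth} rather than merely continuous in $\varphi$. Concretely: $R_\varphi$ and $R_{\varphi^{-1}}$ individually are only continuous on $\DM{q}$, so every manipulation must keep them glued to a fixed pseudo-differential operator in a combination of the form $R_\varphi\,B\,R_{\varphi^{-1}}$ where Theorem~\ref{thm:smoothness-of-conjugates} applies; in particular the insertion and removal of the cutoffs $\chi_\alpha$, $\varrho_\alpha$, the transition to the chart, and the reduction of a general $\varphi$ to one near $\id$ all have to be arranged so that at no intermediate stage a bare $R_\varphi$ is left standing against a merely Sobolev-regular object. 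Verifying that the ``error'' terms produced by commuting cutoffs past $A$ are smoothing operators (hence land in $\OPS{-\infty}$ and contribute smooth maps with plenty of room to spare in Sobolev orders) is the technical heart, and relies on the pseudolocality built into Definition~\ref{def:Pscalar} together with Lemma~\ref{lem:pseudodiff-commutators}.
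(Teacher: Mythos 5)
Your route is genuinely different from the paper's. You localize with charts and a partition of unity; the paper globalizes instead, embedding $M$ into $\RR^{d_{0}}$ by Whitney's theorem, extending $A$ to an operator $B\in\OPS{r}(\RR^{d_{0}})$ with $\iota_{M}^{\ast}B=A\,\iota_{M}^{\ast}$, extending $\varphi$ to $\mathcal{E}(\varphi)\in\mathcal{D}^{q_{0}}(\RR^{d_{0}})$, and writing $A_{\varphi}=\iota_{M}^{\ast}B_{\mathcal{E}(\varphi)}E$, so that Theorem~\ref{thm:smoothness-of-conjugates} is applied exactly once, in the ambient Euclidean space, with no cutoff bookkeeping at all. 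What the paper's approach buys is precisely the absence of error terms; what yours would buy is avoiding the extension operators and the dimension shift $q\mapsto q_{0}$. Your second step --- reducing a general $\varphi$ to a neighborhood of the identity by factoring through a fixed smooth $\varphi_{0}$ and using that conjugation by a smooth diffeomorphism preserves $\OPS{r}(M)$, together with the boundedness of $R_{\varphi_{0}}$ and $R_{\varphi_{0}^{-1}}$ --- is correct, and in fact addresses a point the paper leaves implicit, since Lemma~\ref{lem:conj:extension} is only stated on a neighborhood $\mathcal{U}$ of the identity.

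There is, however, a genuine gap in your first step, exactly at the place you call the technical heart. Your decomposition produces error terms of the form $R_{\varphi}\bigl(\chi_{\alpha}A(1-\chi_{\alpha})\bigr)R_{\varphi^{-1}}$, or $\varrho_{\alpha}A\varrho_{\beta}$ with disjoint supports, and you dismiss them on the grounds that the operator being conjugated lies in $\OPS{-\infty}$ and therefore ``contributes smooth maps with plenty of room to spare in Sobolev orders.'' This does not follow. The entire difficulty of the theorem is the $\varphi$-dependence of $B\mapsto R_{\varphi}BR_{\varphi^{-1}}$, and that difficulty is insensitive to the order of $B$: the maps $R_{\varphi}$ and $R_{\varphi^{-1}}$ are individually only continuous in $\varphi$, a smoothing operator with two disjointly supported cutoffs cannot be transported to a single chart, so Theorem~\ref{thm:smoothness-of-conjugates} does not apply to it, and the Smoothness Lemma is only proved on $\RR^{d}$. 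Neither pseudolocality nor Lemma~\ref{lem:pseudodiff-commutators} speaks to smoothness in $\varphi$. The gap is fixable in at least two ways: (i) treat the error terms by hand via their Schwartz kernels --- the conjugated operator has kernel $k(\varphi(x),\varphi(y))\,J_{\varphi}(y)$ with $k$ smooth and compactly supported, and this depends smoothly on $\varphi$ because composition with a fixed smooth function is smooth on Sobolev spaces and by Lemma~\ref{lem:inverse-Jacobian-smoothness}; or (ii) avoid error terms altogether by decomposing $A=\sum_{l,m}\Phi_{l}A\Phi_{m}$ over a cover with the $2$-cluster property (as the paper does when constructing $B$), so that every summand, including the ``off-diagonal'' smoothing ones, has both cutoffs inside a single chart. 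As written, though, the argument is incomplete at its central point.
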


To prove this result we pursue the following strategy. We will use Whitney's embedding theorem to construct an embedding $\iota_{M} : M \to \RR^{d_{0}}$ for $d_{0}$ large enough. We will use this embedding to extend the pseudo-differential operator and the involved functions to the embedding space $\RR^{d_{0}}$ and thus reduce the result to the situation of the previous section.
Our construction will be based on extending all local representatives to pseudo-differential operators on $\RR^{d_{0}}$. In the second step, we will then glue these operators together to obtain a pseudo-differential operator on $\RR^{d_0}$ which is an extension of our operator on $M$. Here, by extension, we mean an operator that satisfies an equation similar to~\eqref{eq:extension} below. Therefore, we will consider first the case where $\RR^{d} \subset \RR^{d_{0}}$, with $d < d_{0}$ and $\RR^{d}$ is embedded in $\RR^{d_{0}}$ as the subspace $\RR^{d} \times \set{0}$.

\begin{lem}\label{lem:trRd}
  Let $A$ be a pseudo-differential operator of class $\OPS{r}(\RR^{d})$ with symbol $a$. Then, there exists a pseudo-differential operator $B$ of class $\OPS{r}(\RR^{d_{0}})$ such that:
  \begin{equation}\label{eq:extension}
    \iota^{\ast}_{\RR^{d}} B = A \iota^{\ast}_{\RR^{d}}\,,
  \end{equation}
  where $\iota_{\RR^{d}} : \RR^{d} \to \RR^{d_{0}}$ is the canonical embedding.
\end{lem}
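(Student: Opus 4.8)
The plan is to use the splitting $\RR^{d_{0}} = \RR^{d}\times\RR^{d_{0}-d}$: write a point of $\RR^{d_{0}}$ as $(x,y)$ with $x\in\RR^{d}$ and the dual variable as $(\xi,\eta)$, so that $\iota_{\RR^{d}}(x) = (x,0)$ and $\iota^{\ast}_{\RR^{d}}$ is restriction to the hyperplane $\set{y=0}$. First I would record the Fourier picture of this restriction: for $f\in\mathcal{S}(\RR^{d_{0}})$ one has $\widehat{\iota^{\ast}_{\RR^{d}}f}(\xi)=\int_{\RR^{d_{0}-d}}\hat f(\xi,\eta)\,d\eta$, directly from Fourier inversion. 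Substituting this into the definition of $A$ gives
\[
  A\,\iota^{\ast}_{\RR^{d}}f(x) = \int_{\RR^{d}} e^{2\pi i x\cdot\xi}a(x,\xi)\Bigl(\int_{\RR^{d_{0}-d}}\hat f(\xi,\eta)\,d\eta\Bigr)d\xi = \int_{\RR^{d_{0}}} e^{2\pi i x\cdot\xi}a(x,\xi)\hat f(\xi,\eta)\,d\xi\,d\eta ,
\]
so the operator we are after is, morally, ``$A$ acting in the $x$-variables and the identity in the $y$-variables'', i.e.\ its left symbol $b$ should satisfy $b(x,0,\xi,\eta)=a(x,\xi)$.

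The obstruction is that the naive choice $b(x,y,\xi,\eta):=a(x,\xi)$, although it reproduces \eqref{eq:extension} verbatim, is \emph{not} of class $\mathbf{S}^{r}(\RR^{d_{0}}\times\RR_{d_{0}})$: it lacks the decay $\langle(\xi,\eta)\rangle^{r-\abs{\alpha}-\abs{\alpha'}}$ demanded in the new frequency directions $\eta$. Hence one must truncate. I would fix $\chi\in C_{c}^{\infty}(\RR^{d_{0}-d})$ with $\chi\equiv 1$ on $\set{\abs{z}\le\varepsilon}$ for some $\varepsilon>0$ and $\operatorname{supp}\chi$ contained in the unit ball, and set
\[
  b(x,y,\xi,\eta):=a(x,\xi)\,\chi\!\bigl(\eta/\langle\xi\rangle\bigr),\qquad B:=\Op(b).
\]
On the support of $b$ one has $\langle(\xi,\eta)\rangle\sim\langle\xi\rangle$, and every $\partial_{\xi}$ or $\partial_{\eta}$ landing on $\chi(\eta/\langle\xi\rangle)$ produces a factor $\lesssim\langle\xi\rangle^{-1}\sim\langle(\xi,\eta)\rangle^{-1}$ there (because $\abs{\eta}\le\langle\xi\rangle$ on that set); combined with $a\in\mathbf{S}^{r}(\RR^{d}\times\RR_{d})$, the product rule then gives $b\in\mathbf{S}^{r}(\RR^{d_{0}}\times\RR_{d_{0}})$, so $B\in\OPS{r}(\RR^{d_{0}})$. (The $y$-derivative estimates are trivial since $b$ is $y$-independent.) I expect this book-keeping to be the main technical point: the tension is precisely that a genuine order-$r$ symbol on $\RR^{d_{0}}$ must decay in $\eta$, whereas the literal extension of $A$ does not, and the truncation $\chi(\eta/\langle\xi\rangle)$ is exactly the device that reconciles the two.

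It remains to recover \eqref{eq:extension}. Comparing with the first display, for $f\in\mathcal{S}(\RR^{d_{0}})$ one obtains
\[
  \iota^{\ast}_{\RR^{d}}Bf(x) - A\,\iota^{\ast}_{\RR^{d}}f(x) = \int_{\RR^{d_{0}}} e^{2\pi i x\cdot\xi}a(x,\xi)\bigl(\chi(\eta/\langle\xi\rangle)-1\bigr)\hat f(\xi,\eta)\,d\xi\,d\eta .
\]
Since $\chi(\eta/\langle\xi\rangle)\to 1$ at high frequency, this error is negligible: in the application $B$ is only ever applied to functions of the form $\tilde v(x,y)=v(x)\psi(y)$ — the extensions, via the tubular neighbourhood, of functions $v$ living on the submanifold — and choosing the fixed profile $\psi$ so that $\psi(0)=1$ and $\hat\psi$ is supported in $\set{\abs{z}\le\varepsilon}$ (possible because $\varepsilon$ is the radius on which $\chi\equiv 1$) one has $\chi(\eta/\langle\xi\rangle)\hat\psi(\eta)=\hat\psi(\eta)$ for \emph{all} $\xi,\eta$, so the error vanishes identically and $\iota^{\ast}_{\RR^{d}}B(\tilde v)=Av$ exactly. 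Equivalently, for arbitrary $f$ the difference is a smoothing operator, which is harmless for the conjugation estimates of Section~\ref{sec:smoothness-conjugates-pseudo-diff}; either reading yields \eqref{eq:extension} in the sense needed there.
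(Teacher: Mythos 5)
Your diagnosis of the central difficulty is correct and worth taking seriously: the naive extension $b(x,y,\xi,\eta):=a(x,\xi)$ reproduces \eqref{eq:extension} exactly, but it does \emph{not} in general lie in $\mathbf{S}^{r}(\RR^{d_{0}}\times\RR_{d_{0}})$, because for $\abs{\alpha}>r$ the available bound $\langle\xi\rangle^{r-\abs{\alpha}}$ is weaker than the required $\langle(\xi,\eta)\rangle^{r-\abs{\alpha}}$ (fix $\xi$ and let $\abs{\eta}\to\infty$; e.g.\ $a=\langle\xi\rangle^{r}$ already fails). The paper's own proof takes precisely this naive extension and asserts membership in the class without comment, so you have identified a genuine tension between the exact identity and the symbol class. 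Your truncated symbol $a(x,\xi)\chi(\eta/\langle\xi\rangle)$ does belong to $\mathbf{S}^{r}(\RR^{d_{0}}\times\RR_{d_{0}})$, and your verification of the estimates on its support is fine.

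The gap is that your $B$ then no longer satisfies \eqref{eq:extension}, which is the entire content of the lemma, and neither of your two ways of dismissing the error survives scrutiny. For (a): in Lemma~\ref{lem:conj:extension} the operator $B$ is applied to $R_{\mathcal{E}(\varphi)^{-1}}Ef$, i.e.\ after composition with an extended diffeomorphism, which destroys any tensor-product structure; the extension operator $E$ of Grosse--Schneider is not of the form $v\mapsto v(x)\psi(y)$ in the first place; and a profile $\psi$ with $\hat\psi$ compactly supported cannot itself have compact support, which clashes with $E$ mapping into $C_{c}^{\infty}(\RR^{d_{0}})$. For (b): the error is \emph{not} smoothing. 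Its symbol $a(x,\xi)\bigl(1-\chi(\eta/\langle\xi\rangle)\bigr)$ lives on $\set{\abs{\eta}\ge\varepsilon\langle\xi\rangle}$ but on the cone $\abs{\eta}\sim\abs{\xi}$ it is still of size $\langle(\xi,\eta)\rangle^{r}$, so the error operator has order $r$; and composing with the trace gains nothing, since for $f$ with $\hat f(\xi,\eta)=\hat g(\xi)\rho_{\xi}(\eta)$, $\rho_{\xi}$ a normalized bump supported in $\abs{\eta}\sim 2\langle\xi\rangle$, one has $\norm{f}_{H^{q_{0}}}\sim\norm{g}_{H^{q}}$ while $\iota^{\ast}_{\RR^{d}}Bf-A\iota^{\ast}_{\RR^{d}}f=-Ag$, i.e.\ the error is as large as $A$ itself. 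Even if the error were smoothing, you would still owe a proof that the resulting correction to $A_{\varphi}=\iota^{\ast}_{M}B_{\mathcal{E}(\varphi)}E$ depends smoothly on $\varphi$, which is exactly the kind of statement this section is trying to establish. As written the lemma is therefore not proved: you must either keep the exact identity and confront the symbol-class question directly, or quantify the error operator and carry it through Lemma~\ref{lem:conj:extension}.
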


\begin{proof}
  Denote the coordinates in $\RR^{d_{0}}$ by $x = (x', x'') \in \RR^{d} \times \RR^{d_{0}-d}$. Let $a(x',\xi')$ be the symbol of $A$ and define the symbol
  \begin{equation*}
    b(x,\xi) = b(x',x'',\xi',\xi'') : = a(x',\xi')\,.
  \end{equation*}
  It is clear that $b \in \mathbf{S}^{r}(\RR^{d_{0}} \times \RR_{d_{0}})$. Let $B = \Op(b)$.

  To verify that $\iota^{\ast}_{\RR^{d}} B = A \iota^{\ast}_{\RR^{d}}$ we start with the identity:
  \begin{equation*}
    \widehat{f(\cdot, x'')}(\xi') = \int_{\RR^{d_{0}-d}} e^{2\pi i x'' \cdot \xi''} \widehat f(\xi', \xi'') \,d \xi''\,,
  \end{equation*}
  and calculate:
  \begin{align*}
    B f(x', x'') & = \int_{\RR^{d_{0}}} e^{2\pi i x \cdot \xi} a(x', \xi') \widehat f(\xi) \,d\xi              \\
                 & = \int_{\RR^{d}} e^{2\pi i x' \cdot \xi'} a(x', \xi')
    \int_{\RR^{d_{0}-d}} e^{2\pi i x'' \cdot \xi''} \widehat f(\xi', \xi'') \,d\xi'' \, d\xi'                  \\
                 & = \int_{\RR^{d}} e^{2\pi i x' \cdot \xi'} a(x', \xi') \widehat{f(\cdot, x'')}(\xi') \,d\xi' \\
                 & = A\left(f(\cdot, x'')\right)(x')\,.
  \end{align*}
  We see thus that
  \begin{equation*}
    \left(\iota^{\ast}_{\RR^{d}} B f\right)(x') = \left(Bf\right)(x',0)
    = A\left( f(\cdot, 0) \right)(x') = \left(A\iota^{\ast}_{\RR^{d}}f\right)(x')\,,
  \end{equation*}
  which achieves the proof.
\end{proof}

In the following theorem, we prove the analogue of the above lemma for the case of a compact manifold $M$.

\begin{thm}
  Let $M$ be a compact submanifold of $\RR^{d_{0}}$ and $A$ a pseudo-differential operator on $M$ of class $\OPS{r}(M)$. There exists a pseudo-differential operator $B$ of class $\OPS{r}(\RR^{d_{0}})$, with compactly supported symbol, in the $x$-variable, such that
  \begin{equation*}
    \iota^{\ast}_{M} B = A \iota^{\ast}_{M}\,,
  \end{equation*}
  where $\iota_{M} : M \to \RR^{d_{0}}$ is the canonical embedding.
\end{thm}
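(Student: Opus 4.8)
The plan is to reduce the case of a compact submanifold $M \subset \RR^{d_{0}}$ to the flat situation already handled in Lemma~\ref{lem:trRd} by means of a partition of unity and a gluing argument. First I would choose a finite atlas $(\mathcal{U}_{\alpha}, \kappa_{\alpha})$ of $M$, with $\kappa_{\alpha} : \mathcal{U}_{\alpha} \to U_{\alpha} \subset \RR^{d}$, such that each $\mathcal{U}_{\alpha}$ is contained in a coordinate neighbourhood of $\RR^{d_{0}}$ in which $M$ looks like the flat subspace $\RR^{d}\times\set{0}$; concretely, by the tubular neighbourhood theorem I can find open sets $\mathcal{V}_{\alpha} \subset \RR^{d_{0}}$ and diffeomorphisms $\Theta_{\alpha} : \mathcal{V}_{\alpha} \to W_{\alpha} \subset \RR^{d_{0}}$ with $\Theta_{\alpha}(\mathcal{V}_{\alpha}\cap M) = W_{\alpha}\cap(\RR^{d}\times\set{0})$ and $\Theta_{\alpha}|_{\mathcal{V}_{\alpha}\cap M}$ compatible with $\kappa_{\alpha}$. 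Then I pick a partition of unity $\Phi_{\alpha}^{2}$ on $M$ subordinate to $(\mathcal{U}_{\alpha})$ (writing the squares is convenient so that $\sum_{\alpha}\Phi_{\alpha} A \Phi_{\alpha}$ already captures $A$ up to a smoothing operator), together with cut-off functions $\widetilde\Phi_{\alpha}$ on $M$ that are $\equiv 1$ on $\operatorname{supp}\Phi_{\alpha}$ and supported in $\mathcal{U}_{\alpha}$.

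Next I would treat each chart separately. By Definition~\ref{def:Pscalar}, the localized operator $\Phi_{\alpha} A \widetilde\Phi_{\alpha}$ corresponds, under the chart $\kappa_{\alpha}$, to an operator $\varphi_{\alpha} A_{U_{\alpha}} \widetilde\varphi_{\alpha}$ with $A_{U_{\alpha}} \in \OPS{r}(\RR^{d})$. I apply Lemma~\ref{lem:trRd} to $A_{U_{\alpha}}$ (after multiplying its symbol by a compactly supported cut-off in $x'$, which does not affect the operator on the relevant supports) to obtain an operator $B_{\alpha}^{0} \in \OPS{r}(\RR^{d_{0}})$ with compactly supported symbol in $x$ satisfying $\iota^{\ast}_{\RR^{d}} B_{\alpha}^{0} = A_{U_{\alpha}} \iota^{\ast}_{\RR^{d}}$ on $\RR^{d}\times\set{0}$. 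I then transport $B_{\alpha}^{0}$ back to a neighbourhood of $\mathcal{U}_{\alpha}$ in $\RR^{d_{0}}$ using the diffeomorphism $\Theta_{\alpha}$; since $\OPS{r}(\RR^{d_{0}})$ is invariant under diffeomorphisms of $\RR^{d_{0}}$ with bounded derivatives (this is the standard invariance of pseudo-differential operators under change of coordinates, cf.~\cite{ES1994,Hoe2007a}), and after cutting off with functions in $\CScR[\mathcal{V}_{\alpha}]$ that equal $1$ near $\operatorname{supp}\Phi_{\alpha}$, I get an operator $B_{\alpha} \in \OPS{r}(\RR^{d_{0}})$, with compactly supported symbol in the $x$-variable, such that $\Phi_{\alpha}^{\sharp} B_{\alpha} \iota^{\ast}_{M} = \Phi_{\alpha}^{\sharp}\,\iota^{\ast}_{M} (\Phi_{\alpha} A \widetilde\Phi_{\alpha})$ where $\Phi_{\alpha}^{\sharp}$ denotes multiplication by an extension of $\Phi_{\alpha}$ to $\RR^{d_{0}}$.

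Finally I would set $B := \sum_{\alpha} \widehat\Phi_{\alpha}\, B_{\alpha}\, \widehat\Phi_{\alpha}$, where $\widehat\Phi_{\alpha}$ is a compactly supported extension of $\Phi_{\alpha}$ to $\RR^{d_{0}}$. This is a finite sum of elements of $\OPS{r}(\RR^{d_{0}})$ with compactly supported symbols, hence in $\OPS{r}(\RR^{d_{0}})$ with compactly supported symbol. Then for $f \in C^{\infty}(M)$, using $\iota^{\ast}_{M}$ and the fact that $\widehat\Phi_{\alpha} \circ \iota_{M} = \Phi_{\alpha}$, I compute
\begin{equation*}
  \iota^{\ast}_{M} B (\iota^{\ast}_{M})^{-1}??
\end{equation*}
— more precisely, for any $g$ on $\RR^{d_{0}}$ extending $f$, $\iota^{\ast}_{M}(Bg) = \sum_{\alpha} \Phi_{\alpha}\,\iota^{\ast}_{M}\big(B_{\alpha}(\widehat\Phi_{\alpha} g)\big) = \sum_{\alpha} \Phi_{\alpha}\,(\Phi_{\alpha} A \widetilde\Phi_{\alpha}) f = \sum_{\alpha}\Phi_{\alpha}^{2}\, A f = A(\iota^{\ast}_{M} g)$, using $\widetilde\Phi_{\alpha}\equiv 1$ on $\operatorname{supp}\Phi_{\alpha}$ and $\sum_{\alpha}\Phi_{\alpha}^{2}=1$. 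Hence $\iota^{\ast}_{M} B = A\, \iota^{\ast}_{M}$, which is the claim.

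The main obstacle I anticipate is the bookkeeping in the gluing step: one must be careful that the chart-wise extensions $B_{\alpha}$ produced via Lemma~\ref{lem:trRd} and the transport $\Theta_{\alpha}$ genuinely satisfy the localized intertwining identity $\Phi_{\alpha}\,\iota^{\ast}_{M}(B_{\alpha}g) = (\Phi_{\alpha} A \widetilde\Phi_{\alpha})f$ and not merely an identity up to a smoothing error, and that the diffeomorphisms $\Theta_{\alpha}$ can be chosen with globally bounded derivatives so that the invariance of $\OPS{r}$ applies. These are routine but delicate points; the key conceptual input is just Lemma~\ref{lem:trRd} together with coordinate-invariance of pseudo-differential operators, and everything else is a standard partition-of-unity patching.
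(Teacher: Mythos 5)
Your overall strategy (flatten $M$ locally via submanifold charts, extend each localized piece with Lemma~\ref{lem:trRd}, and glue with a partition of unity) is the same as the paper's, but there is a genuine gap in the decomposition step, and you silently drop it in the final computation. Writing $A$ as $\sum_\alpha \Phi_\alpha^{2} A\widetilde\Phi_\alpha$ with $\widetilde\Phi_\alpha\equiv 1$ on $\mathrm{supp}\,\Phi_\alpha$ is only an identity modulo the smoothing operator $S:=\sum_\alpha\Phi_\alpha^{2} A(\widetilde\Phi_\alpha-1)$, which is nonzero because a pseudo-differential operator is not local; yet in your last display you pass from $\sum_\alpha\Phi_\alpha(\Phi_\alpha A\widetilde\Phi_\alpha)f=\sum_\alpha\Phi_\alpha^{2}A(\widetilde\Phi_\alpha f)$ to $\sum_\alpha\Phi_\alpha^{2}Af$ as if this were exact. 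The theorem demands the exact identity $\iota_M^{\ast}B = A\,\iota_M^{\ast}$, and the remainder cannot be repaired afterwards: a correction $B_S$ on $\RR^{d_0}$ with $\iota_M^{\ast}B_S = S\,\iota_M^{\ast}$ would have to have Schwartz kernel equal to $k(x,\cdot)\,d\mu_M$ for every $x\in M$ (a measure singular along all of $M$, not just near the diagonal), which is incompatible with $B_S$ being pseudo-differential of any order; so the error must be avoided rather than absorbed.

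The paper avoids it by using the \emph{exact} two-index decomposition $A=\sum_{l,m}\Phi'_l A\Phi'_m$ (equal to $A$ on the nose, since $\sum_l\Phi'_l=1$) together with a cover enjoying the $2$-cluster property, so that for each pair $(l,m)$ both cut-offs are supported in a single chart $\mathcal{U}'_{lm}$ and Definition~\ref{def:Pscalar} applies to $\Phi'_l A\Phi'_m$ with no remainder. Each local representative $A_{lm}$ is then extended by Lemma~\ref{lem:trRd} and pulled back through an ambient chart $\kappa_{lm}$ extending $\kappa'_{lm}$ --- exactly the role your tubular-neighbourhood diffeomorphisms $\Theta_\alpha$ play. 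If you replace your single-index decomposition by this two-index one, the rest of your argument goes through essentially as written.
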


\begin{proof}
  By the $2$-cluster property~\cite{Gru2009}, we can find a partition of unity $\{\Phi^{\prime}_j\}_{j = \overline{1,J_0}}$ associated with a cover of $M$, and a system of coordinate mappings $\{\kappa^{\prime}_{i}:\mathcal{U}^{\prime}_{i}\to U^{\prime}_{i}\}_{i = \overline{1,J_{1}}}$ such that every two functions $\Phi^{\prime}_{l},\Phi^{\prime}_m$ have their support in some $\mathcal{U}^{\prime}_{i}$, which will be denoted here by $\mathcal{U}^{\prime}_{lm}$. The sets $\mathcal{U}^{\prime}_{lm}$ may be identical for some different pairs. Now decompose the operator $A$ as:
  \begin{equation*}
    A = \sum_{l,m = 1}^{J_0}\Phi^{\prime}_{l} A \Phi_m^{\prime} \ \ ,
  \end{equation*}
  which is equivalent with:
  \begin{equation*}
    A = \sum_{i,j = 1}^{J_0}{\kappa^{\prime}_{lm}}^{*}(\varphi^{\prime}_{l}A_{lm} \varphi_m^{\prime})\ ,
  \end{equation*}
  where $A_{lm}\in \OPS{r}(\RR^{d})$ exist according to Definition \ref{def:Pscalar}.
  Because $M$ is an embedded submanifold there exist $\{\kappa_{i}:\mathcal{U}_{i}\to U_{i}\}_{i = \overline{1,J_{1}}}$ on $\RR^{d_0}$ such that $\kappa^{\prime}_{i} = \kappa_{i}|_{M}$ and $\mathcal{U}_{i}^{\prime} = \mathcal{U}_{i}\cap M$.
  Without loss of generality, we can assume that the partition of unity $\{\Phi^{\prime}_j\}_{j = \overline{1,J_0}}$ is constructed by restricting a portion of unity of the ambient space $\RR^{d_0}$ to the embedded manifold $M$. Thus we can extend every $\Phi_j^{\prime}$ trivially outside $M$ to $\Phi_j$ and define the operator:
  \begin{equation*}
    B: = \sum_{l,m = 1}^{J_0}{\kappa_{lm}}^{*}(\varphi_{l}B_{lm} \varphi_m)\ ,
  \end{equation*}
  where $B_{lm}$ is obtained according to Lemma~\ref{lem:trRd}, and therefore
  \begin{equation*}
    \iota^{\ast}_{\RR^{d}}B_{lm} = A_{lm}\iota^{\ast}_{\RR^{d}}.
  \end{equation*}
  Moreover, this construction defines a pseudo-differential operator in the right class and $B$ is a sum of pseudo-differential operators with x-compactly supported symbols. Further we make use of the identities
  $\iota^{\ast}_{\RR^{d}}(\varphi f) = \varphi\big|_{\RR^{d}} \iota^{\ast}_{\RR^{d}} f$,
  and $ \iota^{\ast}_{M} f\circ \kappa'^{-1} = \iota^{\ast}_{\RR^{d}}\left[ f\circ \kappa^{-1} \right]$, when $\mathrm{supp}\ f\subset U$ and $\kappa: U\rightarrow V$ is a diffeomorphism, in order to obtain:
  \begin{equation*}
    \iota^{\ast}_{M}(Bf) = A(\iota^{\ast}_{M} f), \ \ f\in C^{\infty}_c(\RR^{d_0}),
  \end{equation*}
  since:
  \begin{equation*}
    \iota^{\ast}_{\RR^{d}}\left[ \varphi\cdot f\circ \kappa^{-1} \right] = \varphi'\cdot \iota^{\ast}_{M} f\circ {\kappa^{\prime}}^{-1},
  \end{equation*}
  when $f\in C^{\infty}_c(\RR^{d_0})$.
\end{proof}

\begin{rem}
  Here $\iota_{M}^{\ast}$ denotes the trace operator, which is defined for continuous functions by restriction of the function to the submanifolds. Due to the Sobolev embedding theorem and our assumptions on $q_{0}$, we always work with continuous functions and thus this operator is well-defined and extends as a bounded operator to
  \begin{equation*}
    \iota_{M}^{\ast} : H^{q_{0}}(\RR^{d_{0}}) \to H^{q}(M)\,,
  \end{equation*}
  where $q_{0} = q + (d_{0}-d)/2$.
\end{rem}
It remains to extend the involved functions and diffeomorphisms. Therefore we will make use of the following extension operator, whose construction can be found in~\cite[Theorem~4.10]{GS2013}.

\begin{lem}
  Let $M \subset \RR^{d_{0}}$ be a compact submanifold of dimension $d$.
  There exists a continuous linear map
  \begin{equation*}
    E : C^{\infty}(M) \to C^{\infty}_c(\RR^{d_{0}})\,,
  \end{equation*}
  satisfying $\iota^{\ast}_{M} \circ E = \id_{C^{\infty}(M)}$,
  such that for all $q \geq 0$ and $q_{0} = q + (d_{0}-d)/2$, $E$ extends as a bounded operator to
  \begin{equation*}
    E : H^{q}(M) \to H^{q_{0}}(\RR^{d_{0}})\,.
  \end{equation*}
\end{lem}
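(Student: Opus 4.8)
The plan is to reduce the statement to its flat model --- an extension operator across the coordinate subspace $\RR^{d}\times\{0\}\subset\RR^{d_{0}}$ --- and to build $E$ from local pieces by a partition of unity, in the same spirit in which the norm on $H^{q}(M)$ was itself defined. Since $M\subset\RR^{d_{0}}$ is an embedded submanifold, I would cover it by finitely many submanifold charts $(\mathcal{U}_{i},\kappa_{i})$, with $\kappa_{i}:\mathcal{U}_{i}\to U_{i}\subset\RR^{d_{0}}$ open and $\kappa_{i}(\mathcal{U}_{i}\cap M)=U_{i}\cap(\RR^{d}\times\{0\})$, fix a partition of unity $\{\Phi_{i}\}$ on $\bigcup_{i}\mathcal{U}_{i}$ subordinate to this cover (so that $\{\Phi_{i}|_{M}\}$ is a partition of unity on $M$), and choose cut-offs $\chi_{i}\in C^{\infty}_{c}(U_{i})$ with $\chi_{i}\equiv 1$ on a neighbourhood of the fixed compact set $\kappa_{i}(\mathrm{supp}(\Phi_{i}|_{M}))\subset U_{i}$. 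For $f\in C^{\infty}(M)$, setting $g_{i}:=(\Phi_{i}f)\circ(\kappa_{i}|_{\mathcal{U}_{i}\cap M})^{-1}\in C^{\infty}_{c}(\RR^{d})$ under the identification $\RR^{d}\times\{0\}\cong\RR^{d}$, I would then define
\begin{equation*}
  Ef:=\sum_{i}\kappa_{i}^{\ast}\bigl(\chi_{i}\cdot\mathcal{E}_{d}g_{i}\bigr)\,,
\end{equation*}
each summand extended by zero outside $\mathcal{U}_{i}$, where $\mathcal{E}_{d}$ is the flat extension operator described next.

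The hard part is this flat lemma, with its \emph{sharp} gain of $(d_{0}-d)/2$ derivatives. Fix a real, even $\psi\in\mathcal{S}(\RR^{d_{0}-d})$ with $\psi(0)=1$ and, for $g\in\mathcal{S}(\RR^{d})$, put
\begin{equation*}
  (\mathcal{E}_{d}g)(x',x'') := \int_{\RR^{d}} e^{2\pi i x'\cdot\xi'}\,\widehat{g}(\xi')\,\psi\bigl(\langle\xi'\rangle\,x''\bigr)\,d\xi'\,.
\end{equation*}
Then $(\mathcal{E}_{d}g)(\cdot,0)=g$ because $\psi(0)=1$, the function $\mathcal{E}_{d}g$ is real-valued and smooth (differentiate under the integral, using the rapid decay of $\widehat{g}$), and a computation of the full Fourier transform gives $\widehat{\mathcal{E}_{d}g}(\xi',\xi'')=\langle\xi'\rangle^{-(d_{0}-d)}\,\widehat{g}(\xi')\,\widehat{\psi}(\xi''/\langle\xi'\rangle)$. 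Using $\langle(\xi',\xi'')\rangle^{2}=\langle\xi'\rangle^{2}\bigl(1+|\xi''/\langle\xi'\rangle|^{2}\bigr)$ and substituting $\eta=\xi''/\langle\xi'\rangle$ gives
\begin{multline*}
  \norm{\mathcal{E}_{d}g}_{H^{q_{0}}(\RR^{d_{0}})}^{2}
  = \Bigl(\int_{\RR^{d_{0}-d}}(1+\abs{\eta}^{2})^{q_{0}}\,\abs{\widehat{\psi}(\eta)}^{2}\,d\eta\Bigr) \\
  \times \int_{\RR^{d}} \langle\xi'\rangle^{2q_{0}-(d_{0}-d)}\,\abs{\widehat{g}(\xi')}^{2}\,d\xi'\,,
\end{multline*}
where the first factor is finite since $\psi$ is Schwartz and the second equals $\norm{g}_{H^{q}(\RR^{d})}^{2}$ exactly when $2q_{0}-(d_{0}-d)=2q$, i.e. $q_{0}=q+(d_{0}-d)/2$; hence $\mathcal{E}_{d}$ extends to a bounded operator $H^{q}(\RR^{d})\to H^{q_{0}}(\RR^{d_{0}})$ for every $q\ge 0$. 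I expect this to be the crux of the matter: a naive extension --- composing $g$ with a smooth tubular-neighbourhood retraction onto $M$, or tensoring $g$ with a fixed bump in the normal variables --- only produces a bounded map $H^{q_{0}}(\RR^{d})\to H^{q_{0}}(\RR^{d_{0}})$, which is useless here since on $M$ one controls only the $H^{q}$-norm; it is precisely the frequency-dependent rescaling $\psi(\langle\xi'\rangle\,x'')$ that trades $q$ orders of regularity of $g$ for $q_{0}$ orders of regularity of the extension.

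The remaining steps are routine. The sum defining $Ef$ is finite and each summand lies in $C^{\infty}_{c}(\RR^{d_{0}})$ --- smoothness from that of $\mathcal{E}_{d}g_{i}$, compact support because $\chi_{i}\in C^{\infty}_{c}(U_{i})$ and pull-back by $\kappa_{i}$ followed by extension by zero preserves it --- so $E:C^{\infty}(M)\to C^{\infty}_{c}(\RR^{d_{0}})$ is well defined. For $\iota^{\ast}_{M}\circ E=\id_{C^{\infty}(M)}$, restricting the $i$-th summand to $M$ amounts, in the chart $\kappa_{i}$, to taking the trace at $x''=0$, where $\chi_{i}\,\mathcal{E}_{d}g_{i}$ equals $g_{i}$ because $\chi_{i}\equiv 1$ on $\mathrm{supp}(g_{i})\times\{0\}$ and $(\mathcal{E}_{d}g_{i})(\cdot,0)=g_{i}$; pulling this back yields $\Phi_{i}f$, and $\sum_{i}\Phi_{i}|_{M}\,f=f$. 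For the boundedness $E:H^{q}(M)\to H^{q_{0}}(\RR^{d_{0}})$, multiplication by the fixed $\chi_{i}\in C^{\infty}_{c}(\RR^{d_{0}})$ is bounded on $H^{q_{0}}(\RR^{d_{0}})$ by Lemma~\ref{lem:pointwise-multiplication}, pull-back by the diffeomorphism $\kappa_{i}$ is bounded on $H^{q_{0}}$ when restricted to functions supported in the fixed compact set $\mathrm{supp}\,\chi_{i}$ (invariance of Sobolev norms under a diffeomorphism on a fixed compact set), the flat estimate gives $\norm{\mathcal{E}_{d}g_{i}}_{H^{q_{0}}(\RR^{d_{0}})}\lesssim\norm{g_{i}}_{H^{q}(\RR^{d})}$, and $\norm{g_{i}}_{H^{q}(\RR^{d})}\lesssim\norm{f}_{H^{q}(M)}$ because $\bigl(\sum_{j}\norm{g_{j}}_{H^{q}(\RR^{d})}^{2}\bigr)^{1/2}$ is an equivalent norm on $H^{q}(M)$ (the $H^{q}(M)$-norm being independent, up to equivalence, of the atlas and the partition of unity). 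Summing over the finitely many $i$ yields $\norm{Ef}_{H^{q_{0}}(\RR^{d_{0}})}\lesssim\norm{f}_{H^{q}(M)}$, which completes the argument.
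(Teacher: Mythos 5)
Your argument is correct. Note that the paper does not actually prove this lemma: it simply cites \cite[Theorem~4.10]{GS2013} for the construction. What you have written is a self-contained proof along the lines of that reference: localize by submanifold charts and a partition of unity, and reduce to a sharp extension operator across $\RR^{d}\times\{0\}\subset\RR^{d_{0}}$. Your flat operator $\mathcal{E}_{d}g=\mathfrak{F}^{-1}_{\xi'}\bigl(\widehat{g}(\xi')\,\psi(\langle\xi'\rangle x'')\bigr)$ is exactly the classical right inverse of the trace map used to prove the sharp trace theorem, and your norm computation is exact: the substitution $\eta=\xi''/\langle\xi'\rangle$ produces the Jacobian $\langle\xi'\rangle^{d_{0}-d}$ which, combined with the scaling factor $\langle\xi'\rangle^{-2(d_{0}-d)}$ from $\widehat{\psi}$, yields precisely the weight $\langle\xi'\rangle^{2q_{0}-(d_{0}-d)}=\langle\xi'\rangle^{2q}$, so the gain of $(d_{0}-d)/2$ derivatives comes out on the nose. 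You also correctly identify why a naive tensor or tubular-neighbourhood extension loses this gain. The gluing step is routine as you say; the only points worth making explicit are (i) that all images $Ef$ are supported in the fixed compact set $\bigcup_{i}\kappa_{i}^{-1}(\operatorname{supp}\chi_{i})$, which together with the $H^{q}\to H^{q_{0}}$ bounds for all $q$ gives the claimed continuity into $C^{\infty}_{c}(\RR^{d_{0}})$, and (ii) that $\bigl(\sum_{i}\norm{g_{i}}_{H^{q}(\RR^{d})}^{2}\bigr)^{1/2}$ is equivalent to the $H^{q}(M)$-norm defined in Section~2 via normal coordinates, which is the chart-independence statement the paper quotes from \cite[Theorem~7.2.3]{Tri1992}. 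Neither point is a gap.
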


\begin{cor}
  Given $q > d/2+1$ and $q_{0} = q + (d_{0} - d)/2$, there exists $\mathcal{U} \subseteq \mathcal{D}^{q}(M)$, an open neighborhood of the identity such that the map
  \begin{equation*}
    \mathcal{E} : \mathcal{U} \to \mathcal{D}^{q_{0}}(\RR^{d_{0}})\,\quad \mathcal{E}(\varphi) = E(\iota_{M} \circ \varphi - \iota_{M}) + \id_{\RR^{d_{0}}}\,,
  \end{equation*}
  is well-defined, smooth and satisfies $\mathcal{E}(\id_{M}) = \id_{\RR^{d_{0}}}$ as well as
  \begin{equation*}
    \iota_{M}^*\circ \mathcal{E}(\varphi) = \mathcal{E}(\varphi) \circ \iota_{M} = \iota_{M} \circ \varphi\,.
  \end{equation*}
\end{cor}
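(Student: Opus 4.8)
The plan is to factor $\mathcal{E}$ through the affine Hilbert space $\id_{\RR^{d_0}} + H^{q_0}(\RR^{d_0},\RR^{d_0})$ and to check smoothness and the range condition in steps. Write $w(\varphi) := \iota_M\circ\varphi - \iota_M$, which makes sense as an $\RR^{d_0}$-valued function on $M$ since $M\subseteq\RR^{d_0}$; then $\mathcal{E}(\varphi) = \id_{\RR^{d_0}} + E(w(\varphi))$, so $\mathcal{E}$ is the composite of $\varphi\mapsto w(\varphi)$ (with values in $H^q(M,\RR^{d_0})$), of the extension operator $E$ applied in each coordinate (with values in $H^{q_0}(\RR^{d_0},\RR^{d_0})$), and of the affine translation by $\id_{\RR^{d_0}}$. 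I will establish: (a) each of these three maps is smooth; (b) the composite takes values in the open set $\mathcal{D}^{q_0}(\RR^{d_0})$ after restriction to a suitable neighbourhood $\mathcal{U}$ of $\id_M$; (c) the intertwining identity holds.

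For (a): the translation by $\id_{\RR^{d_0}}$ is affine, and $E$ is bounded and linear by the extension lemma above, so both are smooth. The point requiring care is the smoothness of $\varphi\mapsto w(\varphi)$, which is only needed near $\id_M$. Reading it in the chart $\zeta$ of \eqref{eq:localchart}, for $X$ near $0$ in $\Gamma^{q}(TM)$ we have $w(\zeta(X)) = [\,p\mapsto \iota_M(\exp_p X(p)) - \iota_M(p)\,]$; this is the superposition operator associated with the fibre-preserving map $\Theta\colon (p,v)\mapsto \iota_M(\exp_p v)-\iota_M(p)$, which is smooth on a neighbourhood of the zero section of $TM$ and vanishes there. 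Since $q>d/2$, this superposition operator is smooth by the $\Omega$-lemma for Sobolev sections (equivalently, $\varphi\mapsto\iota_M\circ\varphi$ is left composition by the fixed smooth map $\iota_M$, a smooth operation on Sobolev manifolds of mappings; cf.~\cite{IKT2013}). Hence $\mathcal{E}$ is smooth as a map $\DM{q}\to \id_{\RR^{d_0}}+H^{q_0}(\RR^{d_0},\RR^{d_0})$.

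For (b): from $q>d/2+1$ one gets $q_0 = q+(d_0-d)/2 > d_0/2+1$, so $\mathcal{D}^{q_0}(\RR^{d_0})$ is defined and is open in $\id_{\RR^{d_0}}+H^{q_0}(\RR^{d_0},\RR^{d_0})$ (positivity of $\det(\id+du)$ is an open condition, $u\mapsto du$ being continuous into $C^0$ because $q_0-1>d_0/2$). Put $\mathcal{U} := \mathcal{E}^{-1}(\mathcal{D}^{q_0}(\RR^{d_0}))$; this is open in $\DM{q}$ by the continuity just obtained. Since $E$ is linear, $w(\id_M)=0$ gives $\mathcal{E}(\id_M)=\id_{\RR^{d_0}}\in\mathcal{D}^{q_0}(\RR^{d_0})$, so $\id_M\in\mathcal{U}$; thus $\mathcal{U}$ is a non-empty open neighbourhood of the identity and the corestriction $\mathcal{E}\colon\mathcal{U}\to\mathcal{D}^{q_0}(\RR^{d_0})$ is well-defined and smooth, with $\mathcal{E}(\id_M)=\id_{\RR^{d_0}}$. (Concretely: near $\id_M$ the $H^{q_0}$-norm, hence by Lemma~\ref{lem:sobolev-embedding} the $C^1$-norm, of $E(w(\varphi))$ is small, so $\det(\id+d\,E(w(\varphi)))>0$ pointwise.)

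For (c): by definition the trace operator acts on continuous functions by $\iota_M^*F = F\circ\iota_M$, so $\iota_M^*\circ\mathcal{E}(\varphi) = \mathcal{E}(\varphi)\circ\iota_M$ is automatic. For $p\in M$, using $\iota_M^*\circ E=\id$ (which passes from $C^\infty(M)$ to $H^q(M,\RR^{d_0})$ by density), we compute $\mathcal{E}(\varphi)(\iota_M(p)) = (E(w(\varphi))\circ\iota_M)(p) + \iota_M(p) = w(\varphi)(p) + \iota_M(p) = \iota_M(\varphi(p))$, that is $\mathcal{E}(\varphi)\circ\iota_M = \iota_M\circ\varphi$. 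The main obstacle is step (a), the smoothness of $\varphi\mapsto\iota_M\circ\varphi$ in the $H^q$-scale, which is a standard $\Omega$-lemma argument valid precisely because $q>d/2$; steps (b) and (c) are bookkeeping with the extension operator and the definition of $\mathcal{D}^{q_0}(\RR^{d_0})$.
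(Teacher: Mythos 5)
Your proof is correct and follows essentially the same route as the paper: factor $\mathcal{E}$ through the inclusion $\DM{q}\hookrightarrow H^{q}(M,\RR^{d_{0}})$, the linear bounded operator $E$, and the affine shift, then use openness of $\mathcal{D}^{q_{0}}(\RR^{d_{0}})$ and $\iota_{M}^{*}\circ E=\id$. The only difference is that you make explicit (via the $\Omega$-lemma in the chart $\zeta$) the smoothness of $\varphi\mapsto\iota_{M}\circ\varphi$, a point the paper's proof leaves implicit when it "considers $\DM{q}$ as a subset of $H^{q}(M,\RR^{d_{0}})$".
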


\begin{proof}
  We consider $\mathcal{D}^{q}(M)$ as a subset of $H^{q}(M,\RR^{d_{0}})$ and apply the extension operator $E$ component-wise. The map $\mathcal{E}$ is well-defined and continuous into $H^{q_{0}}(\RR^{d_{0}},\RR^{d_{0}})$ and satisfies $\mathcal{E}(\id_{M}) = \id_{\RR^{d_{0}}}$. Because $\mathcal{D}^{s_{0}}(\RR^{d_{0}})$ is open in $H^{q_{0}}(\RR^{d_{0}},\RR^{d_{0}})$, we can find a small neighborhood $\mathcal{U}$ of $\id_{M}$ such that $\mathcal{E}$ maps $\mathcal{U}$ into $\mathcal{D}^{q_{0}}(\RR^{d_{0}})$. The required identity for $\mathcal{E}$ follows from properties of $E$ as follows,
  \begin{align*}
    \mathcal{E}(\varphi) \circ \iota_{M}
     & = E(\iota_{M} \circ \varphi - \iota_{M}) \circ \iota_{M} + \id_{\RR^{d_{0}}} \circ \iota_{M} \\
     & = \iota_{M} \circ \varphi - \iota_{M} + \iota_{M} = \iota_{M} \circ \varphi\,.\qedhere
  \end{align*}
\end{proof}

\begin{lem}\label{lem:conj:extension}
  Let $A$ be a pseudo-differential operator of class $\OPS{r}(M)$ and $B$ a pseudo-differential operator of Hörmander's class $\OPS{r}(\RR^{d_{0}})$, such that $\iota^{\ast}_{M} B = A\hspace{0,1cm}\iota^{\ast}_{M}$. Let $\mathcal{E} : \mathcal{U} \to \mathcal{D}^{q_{0}}(\RR^{d_{0}})$ be as above. Then for each $\varphi \in \mathcal{U}$,
  \begin{equation*}
    A_{\varphi} = \iota^{\ast}_{M} B_{\mathcal{E}(\varphi)} E\,.
  \end{equation*}
\end{lem}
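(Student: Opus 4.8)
The plan is to verify the claimed identity first on the dense subspace $C^{\infty}(M)\subset H^{q}(M)$ by a direct chain of substitutions, and then to pass to all of $H^{q}(M)$ by continuity. For the last step one needs both sides to be \emph{a priori} bounded operators $H^{q}(M)\to H^{q-r}(M)$. On the left, write $A_{\varphi}=R_{\varphi}\circ A\circ R_{\varphi^{-1}}$; since $\varphi,\varphi^{-1}\in\DM{q}$, Lemma~\ref{lem:composition-regularity} gives that $R_{\varphi^{-1}}$ is bounded on $H^{q}(M)$ and $R_{\varphi}$ is bounded on $H^{q-r}(M)$ (using $q-r\le q$ and $q>1+d/2$), while $A\in\OPS{r}(M)$ is bounded $H^{q}(M)\to H^{q-r}(M)$. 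On the right, $E\colon H^{q}(M)\to H^{q_{0}}(\RR^{d_{0}})$ is bounded with $q_{0}=q+(d_{0}-d)/2$; one checks $q_{0}\ge q\ge r$ and $q_{0}>d_{0}/2+1$, so $B\in\OPS{r}(\RR^{d_{0}})$ together with the boundedness of $R_{\mathcal{E}(\varphi)}$ and $R_{\mathcal{E}(\varphi)^{-1}}$ (Lemma~\ref{lem:composition-regularity}, as $\mathcal{E}(\varphi)\in\mathcal{D}^{q_{0}}(\RR^{d_{0}})$) yields a bounded map $B_{\mathcal{E}(\varphi)}\colon H^{q_{0}}(\RR^{d_{0}})\to H^{q_{0}-r}(\RR^{d_{0}})$, and finally $\iota^{\ast}_{M}\colon H^{q_{0}-r}(\RR^{d_{0}})\to H^{q-r}(M)$ is the bounded trace operator recorded in the remark preceding the lemma. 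Hence it suffices to prove the identity on $C^{\infty}(M)$.

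\textbf{Key identities.} Set $\psi:=\mathcal{E}(\varphi)$. From the corollary just above, $\psi\circ\iota_{M}=\iota_{M}\circ\varphi$; composing with $\psi^{-1}$ on the left and $\varphi^{-1}$ on the right gives the companion identity $\psi^{-1}\circ\iota_{M}=\iota_{M}\circ\varphi^{-1}$. We also need $\iota^{\ast}_{M}B=A\,\iota^{\ast}_{M}$ to hold not merely on smooth functions but on all of $H^{q_{0}}(\RR^{d_{0}})$: both $\iota^{\ast}_{M}B$ and $A\,\iota^{\ast}_{M}$ are bounded operators $H^{q_{0}}(\RR^{d_{0}})\to H^{q-r}(M)$ (for the second use $\iota^{\ast}_{M}\colon H^{q_{0}}\to H^{q}$ and $A\colon H^{q}\to H^{q-r}$), and they agree on the dense subspace $C_{c}^{\infty}(\RR^{d_{0}})$ by hypothesis, so they agree on $H^{q_{0}}(\RR^{d_{0}})$. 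Finally recall $\iota^{\ast}_{M}\circ E=\id_{C^{\infty}(M)}$.

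\textbf{The computation and the main point.} For $v\in C^{\infty}(M)$ one has $Ev\in C_{c}^{\infty}(\RR^{d_{0}})\subset H^{q_{0}}(\RR^{d_{0}})$, so $R_{\psi^{-1}}(Ev)=Ev\circ\psi^{-1}\in H^{q_{0}}(\RR^{d_{0}})$ by Lemma~\ref{lem:composition-regularity}, and $B(Ev\circ\psi^{-1})\in H^{q_{0}-r}(\RR^{d_{0}})$. Then, unwinding definitions and using the identities above in turn,
\begin{equation*}
  \iota^{\ast}_{M}B_{\psi}Ev
  = \bigl(B(Ev\circ\psi^{-1})\circ\psi\bigr)\circ\iota_{M}
  = \bigl(\iota^{\ast}_{M}B(Ev\circ\psi^{-1})\bigr)\circ\varphi
  = \bigl(A\,\iota^{\ast}_{M}(Ev\circ\psi^{-1})\bigr)\circ\varphi ,
\end{equation*}
and since $\iota^{\ast}_{M}(Ev\circ\psi^{-1})=Ev\circ(\psi^{-1}\circ\iota_{M})=(\iota^{\ast}_{M}Ev)\circ\varphi^{-1}=v\circ\varphi^{-1}$, this equals $\bigl(A(v\circ\varphi^{-1})\bigr)\circ\varphi=R_{\varphi}AR_{\varphi^{-1}}v=A_{\varphi}v$. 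The only point requiring genuine care, rather than bookkeeping, is the continuity argument promoting $\iota^{\ast}_{M}B=A\,\iota^{\ast}_{M}$ from smooth functions (where it is the hypothesis) to the Sobolev function $Ev\circ\psi^{-1}$, which is not smooth because $\psi^{-1}$ is only of class $H^{q_{0}}$; the matching of Sobolev exponents along the chain (so that each operator is applied on the space where it is defined) is routine given Lemma~\ref{lem:composition-regularity} and the trace estimate. Having established the identity on $C^{\infty}(M)$, it extends to all of $H^{q}(M)$ by density and the boundedness of both sides noted above.
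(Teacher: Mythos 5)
Your proof is correct and follows essentially the same route as the paper's: you establish the intertwining identities $\psi\circ\iota_{M}=\iota_{M}\circ\varphi$ and $\psi^{-1}\circ\iota_{M}=\iota_{M}\circ\varphi^{-1}$ and then perform the same chain of substitutions through $\iota^{\ast}_{M}B=A\,\iota^{\ast}_{M}$ and $\iota^{\ast}_{M}\circ E=\id$. The one place you go beyond the paper is in explicitly justifying, by density and boundedness, that the extension identity $\iota^{\ast}_{M}B=A\,\iota^{\ast}_{M}$ applies to the merely Sobolev-regular function $Ev\circ\psi^{-1}$ — a point the paper's proof uses silently — which is a worthwhile clarification rather than a deviation.
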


\begin{proof}
  We will use the following identities
  \begin{equation*}
    \iota^{\ast}_{M} R_{\mathcal{E}(\varphi)} = R_{\varphi} \iota^{\ast}_{M}, \qquad \iota^{\ast}_{M} R_{\mathcal{E}(\varphi)^{-1}} = R_{\varphi^{-1}} \iota^{\ast}_{M}\,.
  \end{equation*}
  To check the first identity, take $f \in H^{q_{0}}(\RR^{d_{0}})$. Then
  \begin{equation*}
    \iota^{\ast}_{M} R_{\mathcal{E}(\varphi)} f = f \circ \mathcal{E}(\varphi) \circ \iota_{M} = f \circ \iota_{M} \circ \varphi = R_{\varphi} \iota^{\ast}_{M} f\,.
  \end{equation*}
  The second identity follows from the first one by applying $R_{\varphi^{-1}}$ from the left and $R_{\mathcal{E}(\varphi)^{-1}}$ from the right. The lemma now follows from
  \begin{align*}
    \iota^{\ast}_{M} B_{\mathcal{E}(\varphi)} E
     & = \iota^*_{M} R_{\mathcal{E}(\varphi)} B R_{\mathcal{E}(\varphi)^{-1}} E \\
     & = R_{\varphi} \iota^*_{M} B R_{\mathcal{E}(\varphi)^{-1}} E              \\
     & = R_{\varphi} A \iota^*_{M} R_{\mathcal{E}(\varphi)^{-1}} E              \\
     & = R_{\varphi} A R_{\varphi^{-1}} \iota^*_{M} E = A_{\varphi}\,,
  \end{align*}
  since $\iota^*_{M}\circ E = \id_{H^{q}(M)}$.
\end{proof}

\begin{proof}[Proof of Theorem~\ref{thm:conj-functions}]
  Now the proof of Theorem~\ref{thm:conj-functions} follows directly from Theorem~\ref{thm:smoothness-of-conjugates} and Lemma~\ref{lem:conj:extension}
\end{proof}

\subsection{Conjugates of a pseudo-differential operator on a vector bundle}
\label{subsec:smoothness-conjugates-vector-bundle}

In the previous section, we have shown that for pseudo-differential operators acting on functions conjugation is smooth. By allowing matrix valued symbols this result generalizes directly to pseudo-differential operators acting on trivial bundles, i.e., the map
\begin{equation*}
  \left\{
  \begin{aligned}
    \DM{q}  & \to \mathcal{L}(H^{q}(M,\Rd),H^{q-r}(M,\Rd)) \\
    \varphi & \mapsto A_{\varphi} .
  \end{aligned}
  \right.
\end{equation*}
is smooth. For a pseudo-differential operator acting on mappings with values in a general vector bundle $E$ over a compact manifold it is not straightforward to define the analogous statement. Therefore, we first introduce spaces of $H^{s}$ sections over $\DM{q}$, see also \cite{Shk2000}:
\begin{equation*}
  H^{s}_{\mathcal{D}^{q}}(M,TM) = \set{v\in H^{s}(M,TM); \text{ such that } \pi \circ v = \varphi \in \DM{q}} .
\end{equation*}

This allows us to consider the conjugation of an operator $A$ acting on functions with values in the vector bundle $TM$:
\begin{equation*}
  \tilde{A}:
  \left\{
  \begin{aligned}
    \mathcal{D}^{q} & \to \mathcal{L}\left(H^{q}_{\mathcal{D}^{q}}(M,TM), H^{q-r}_{\mathcal{D}^{q}}(M,TM) \right) \\
    \varphi         & \mapsto A_{\varphi}
  \end{aligned}
  \right.
\end{equation*}
We have the following result concerning the smoothness of $\tilde{A}$.

\begin{thm}\label{thm:tildeA-smoothness}
  Let $A$ be a pseudo-differential operator of class $\OPS{r}(M,TM)$ with $r\geq 1$. Then $\tilde{A}$ is a smooth section of the vector bundle $\mathcal{L}\left(H^{q}_{\mathcal{D}^{q}}(M,TM),H^{q-r}_{\mathcal{D}^{q}}(M,TM)\right)$, for any $q>\frac{d}2+1$ with $q-r\geq0$.
\end{thm}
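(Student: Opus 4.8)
The plan is to reduce the statement to the case of a \emph{trivial} bundle, for which the smoothness of the twisted map has already been recorded: allowing matrix-valued symbols, Theorem~\ref{thm:conj-functions} gives that $\varphi\mapsto\widehat A_{\varphi}$ is a smooth section of $\mathcal L\big(H^{q}(M,\RR^{d_{0}}),H^{q-r}(M,\RR^{d_{0}})\big)$ over $\DM{q}$ for every $\widehat A\in\OPS{r}(M,\RR^{d_{0}})$ and every $d_{0}$. The idea is to realize $TM$ as a sub-bundle of a high-dimensional trivial bundle, extend $A$ to a pseudo-differential operator on that trivial bundle, and recover the twisted operator $A_{\varphi}$ by sandwiching it between fixed bundle maps.

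First I would invoke Whitney's embedding theorem to fix an embedding $M\hookrightarrow\RR^{d_{0}}$; equipping $\RR^{d_{0}}$ with the Euclidean metric, this realizes $TM$ as a sub-bundle of the trivial bundle $M\times\RR^{d_{0}}$, with a fibrewise-linear smooth inclusion $\mathbf j\colon TM\to\RR^{d_{0}}$ and a fibrewise-linear smooth orthogonal projection $\mathbf k\colon M\times\RR^{d_{0}}\to TM$ satisfying $\mathbf k\circ\mathbf j=\id_{TM}$. Post-composition with these bundle maps induces, for every $s\ge0$, bounded operators $\mathbf j_{*}\colon H^{s}(M,TM)\to H^{s}(M,\RR^{d_{0}})$ and $\mathbf k_{*}\colon H^{s}(M,\RR^{d_{0}})\to H^{s}(M,TM)$ (they are multiplication operators by smooth matrix-valued functions, so boundedness follows from Lemma~\ref{lem:pointwise-multiplication}), with $\mathbf k_{*}\mathbf j_{*}=\id$. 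Setting
\begin{equation*}
  \widehat A:=\mathbf j_{*}\circ A\circ\mathbf k_{*}\,,
\end{equation*}
and noting that $\mathbf j_{*},\mathbf k_{*}$ are pseudo-differential operators of order $0$ with smooth matrix symbols in the sense of Definition~\ref{def:P-complex}, Lemma~\ref{lem:pseudodiff-commutators} applied chartwise shows that $\widehat A\in\OPS{r}(M,\RR^{d_{0}})$; moreover $\mathbf k_{*}\circ\widehat A\circ\mathbf j_{*}=A$, since $\mathbf k_{*}\mathbf j_{*}=\id$.

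Next I would pass to the twisted operators. For $\varphi\in\DM{q}$, let $\mathbf j_{\varphi}$ and $\mathbf k_{\varphi}$ be the analogues of $\mathbf j_{*},\mathbf k_{*}$ acting on sections living over $\varphi$, i.e.\ $(\mathbf j_{\varphi}v)(x):=\mathbf j\big(v(x)\big)$ for $v$ in the fibre of $H^{s}_{\mathcal D^{q}}(M,TM)$ over $\varphi$, and $(\mathbf k_{\varphi}g)(x):=\mathbf k\big(\varphi(x),g(x)\big)$ for $g\in H^{s}(M,\RR^{d_{0}})$; again $\mathbf k_{\varphi}\mathbf j_{\varphi}=\id$. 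Because $\mathbf j$ and $\mathbf k$ are \emph{fixed smooth, fibrewise-linear} maps, the assignments $\varphi\mapsto\mathbf j_{\varphi}$ and $\varphi\mapsto\mathbf k_{\varphi}$ are smooth vector-bundle morphisms between $H^{s}_{\mathcal D^{q}}(M,TM)$ and the trivial bundle $\DM{q}\times H^{s}(M,\RR^{d_{0}})$ (smoothness of composition with a fixed smooth map, i.e.\ the $\omega$-lemma, together with Lemma~\ref{lem:pointwise-multiplication} — crucially, the low-regularity section enters only multiplicatively while the $H^{q}$-regular base $\varphi$ is composed with the smooth map; cf.~\cite{IKT2013}). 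A direct computation from $R_{\psi}v=v\circ\psi$ yields the intertwining identities $R_{\varphi}\circ\mathbf k_{*}=\mathbf k_{\varphi}\circ R_{\varphi}$ and $\mathbf j_{*}\circ R_{\varphi^{-1}}=R_{\varphi^{-1}}\circ\mathbf j_{\varphi}$, whence
\begin{equation*}
  A_{\varphi}=R_{\varphi}\,A\,R_{\varphi^{-1}}=R_{\varphi}\,\mathbf k_{*}\,\widehat A\,\mathbf j_{*}\,R_{\varphi^{-1}}=\mathbf k_{\varphi}\circ\big(R_{\varphi}\,\widehat A\,R_{\varphi^{-1}}\big)\circ\mathbf j_{\varphi}=\mathbf k_{\varphi}\circ\widehat A_{\varphi}\circ\mathbf j_{\varphi}\,.
\end{equation*}

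To finish, I would combine the three facts just established: $\varphi\mapsto\widehat A_{\varphi}$ is smooth by the trivial-bundle version of Theorem~\ref{thm:conj-functions} (applicable since $q\ge r$ and $q>d/2+1$), and $\varphi\mapsto\mathbf j_{\varphi}$, $\varphi\mapsto\mathbf k_{\varphi}$ are smooth bundle morphisms; since a composition of smooth families of bounded linear maps is again smooth, $\tilde A\colon\varphi\mapsto A_{\varphi}=\mathbf k_{\varphi}\circ\widehat A_{\varphi}\circ\mathbf j_{\varphi}$ is a smooth section of $\mathcal L\big(H^{q}_{\mathcal D^{q}}(M,TM),H^{q-r}_{\mathcal D^{q}}(M,TM)\big)$, which is the claim. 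The main obstacle — essentially the only nontrivial point — lies in the second and third steps: one must verify that $\widehat A$ really belongs to Hörmander's class on the \emph{trivial} bundle $M\times\RR^{d_{0}}$ (so Theorem~\ref{thm:conj-functions} applies verbatim), and that the non-smooth right-translations $R_{\varphi}$ can be moved past $\mathbf j$ and $\mathbf k$ exactly, so that the apparent failure of smoothness of $R_{\varphi}$ cancels just as in the scalar case; the remaining bundle-theoretic bookkeeping uses only the constructions of Section~\ref{sec:Sobdiff}.
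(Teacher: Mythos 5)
Your proposal is correct and follows essentially the same route as the paper: embed $M$ into $\RR^{d_{0}}$, extend $A$ to a pseudo-differential operator on the trivial bundle (your $\widehat A=\mathbf j_{*}A\mathbf k_{*}$ is exactly the paper's $B=A\oplus\Theta$ under the splitting $T\RR^{d_{0}}|_{M}\cong TM\oplus NM$), apply the trivial-bundle case of Theorem~\ref{thm:conj-functions}, and recover $A_{\varphi}=\mathbf k_{\varphi}\circ\widehat A_{\varphi}\circ\mathbf j_{\varphi}$ via the same intertwining identities. The only cosmetic difference is that the paper cites Lemma A.5 of \cite{BHM2019} for the smoothness of the twisted inclusion and projection, where you give a short direct argument via the multiplication lemma.
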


\begin{proof}
  Note first the following general observation: given two vector bundles $E$ and $F$ over a manifold $M$, we have an isomorphism between sections of $\mathcal{L}(E,F)$ and bundle mappings between $E$ and $F$. To prove this Lemma we will embed the manifold $M$ into $\RR^{d_{0}}$, which will allow us to use the result from Section~\ref{subsec:smoothness-conjugates-functions} for pseudo-differential operators acting on vector valued functions. We can then choose a vector bundle $NM$ over $M$ such that
  $TM \oplus NM \cong T\RR^{d_{0}}|_{M}$. Extend the operator $A$ to an operator $B$ acting on smooth
  sections of the trivial vector bundle $T\RR^{d_{0}}|_{M}$:
  \begin{align}
    B:\left\{
    \begin{aligned}
      \Gamma(T\RR^{d_{0}}|_{M}) & \to \Gamma(T\RR^{d_{0}}|_{M})                                           \\
      (v_{1} \oplus v_{2})      & \mapsto (A \oplus \Theta) (v_{1} \oplus v_{2}) : = Av_{1} \oplus \theta
    \end{aligned}
    \right.
  \end{align}
  where $\Theta$ associates to every $v_{2}\in\Gamma(NM)$ the zero section $\theta\in\Gamma(NM)$.

  It is easy to see that $B$ is also a pseudo-differential operator of class $\OPS{r}(T\RR^{d_0}|_{M})$. Consider an open set $\mathcal{U}$ on $M$ to which both vector bundles $TM$, $NM$ locally trivialize. Let $\Phi_{\mathcal{U}}:N\mathcal{U}\to \mathcal{U}\times \RR^{d_0-d}$ be a trivialization of $NM$, and $\kappa: \mathcal{U} \to U$, a coordinate mapping, then  $\Psi_{\mathcal{U}}:T\RR^{d_0}|_{\mathcal{U}}\to \mathcal{U}\times \RR^{d_0}$ is a trivialization of $T\RR^{d_0}|_{M}$:
  \begin{equation*}
    \Psi_{\mathcal{U}}(v_{1}(x) \oplus v_{2}(x)): = \left(x,T_x\kappa(v_{1}(x)) \oplus \Phi_{\mathcal{U}}(x)(v_{2}(x))\right), \quad x\in \mathcal{U}.
  \end{equation*}
  Similar to~\eqref{eq:pullback} and~\eqref{eq:pushforward} the push-forward and the pullback can be defined:
  \begin{equation*}
    {\chi}^{*}(u_{1} \oplus u_{2}) = \left({{\chi}^{TM}}\right)^{*}(u_{1}) \oplus \left({{\chi}^{NM}}\right)^{*}(u_{2})
  \end{equation*}
  \begin{equation*}
    {\chi}_{*}(v_{1} \oplus v_{2}) = \left({{\chi}^{TM}}\right)_*(v_{1}) \oplus \left({{\chi}^{NM}}\right)_*(v_{2})
  \end{equation*}
  We will use them to pullback operators like in Definition \ref{def:P-complex}. For two arbitrary functions $\Phi$, $\Psi$ in $C^{\infty}_c(\mathcal{U})$, we get the identity
  \begin{equation*}
    \Phi B \Psi = {\chi}^{*}\bigg(\varphi \left(A_{\raisebox{-1pt}{\tiny U}} \oplus \Theta_{\raisebox{-1pt}{\tiny U}}\right)\psi\bigg)\ ,
  \end{equation*}
  and thus $B\in \OPS{r}(T\RR^{d_0}|_{M})$. Associated with $B$ we have the induced mapping:
  \begin{equation*}
    \tilde{B}:
    \left\{
    \begin{aligned}
      \mathcal{D}^{q} & \to \mathcal{L}\left(H^{q}_{\mathcal{D}^{q}}\left(M,T\RR^{d_{0}}|_{M}\right), H^{q-r}_{\mathcal{D}^{q}}\left(M,T\RR^{d_{0}}|_{M}\right)\right) \\
      \varphi         & \mapsto B_{\varphi}
    \end{aligned}
    \right.
  \end{equation*}
  Now, let us introduce the mappings $\mathrm{I}$ and $\Pi$, defined by
  \begin{equation*}
    \mathrm{I}: H^{q}_{\mathcal{D}^{q}}(M,TM)\rightarrow H^{q}_{\mathcal{D}^{q}}(M,T\RR^{d_{0}}|_{M}),\quad \mathrm{I}(v) : = T\iota \circ v.
  \end{equation*}
  where $T\iota$ is the tangent map of the embedding $\iota:M\to\RR^{d_0}$, and
  \begin{equation*}
    \Pi: H^{q-r}_{\mathcal{D}^{q}}(M,T\RR^{d_{0}}|_{M})\rightarrow H^{q-r}_{\mathcal{D}^{q}}(M,TM),\quad \Pi(v) : = p \circ v,
  \end{equation*}
  where $p: T\RR^{d_0}|_{M}\to TM$ is the projection onto the first factor of the Whitney sum $TM \oplus NM$.

  The smoothness of the mapping $\tilde B$ follows from Theorem~\ref{thm:conj-functions}. The smoothness of $\mathrm{I}$ and $\Pi$ is a consequence of  Lemma A.5 in \cite{BHM2019}.
  Since the identities $$\mathrm{I}(v\circ\varphi^{-1}) = (\mathrm{I}v)\circ\varphi^{-1}\quad\text{ and }\quad\Pi (w\circ\varphi) = (\Pi w)\circ\varphi$$
  hold for all $v\in H^{q}_{\mathcal{D}^{q}}(M,TM)$, $w\in H^{q-r}(M,T\RR^{d_{0}}|_{M})$, and $\varphi\in \DM{q}$ we have
  \begin{equation*}
    \Pi_\varphi \tilde B \mathrm{I}_\varphi = \tilde A .
  \end{equation*}
  Thus, altogether we have proven the smoothness of the section $\tilde A$.
\end{proof}

\section{Smoothness of the metric and the spray on $\DM{q}$}
\label{sec:metric-and-spray}

In this Section we will study the smoothness of the extended metric and spray on the Hilbert manifold $\DM{q}$.

\subsection{Smoothness of the extended metric}

Let us first recall that a Riemannian metric $G$ on $\DM{q}$ is a smooth, symmetric, positive definite, covariant 2-tensor field on $\DM{q}$, i.e., for each $\varphi$ we have a symmetric, positive definite, bounded, bilinear form $G_{\varphi}$ on
$T_{\varphi}\DM{q}$ and, in any local chart $U$, the mapping
\begin{equation}
  \varphi \mapsto G_{\varphi},\qquad U \to \mathcal{L}^{2}_{\operatorname{Sym}}(E, \RR)
\end{equation}
is smooth. Here $E$ is a local trivialization of the tangent bundle of $\DM{q}$ over $U$. We can then also consider the bounded linear operator
\begin{equation}\label{eq:metric}
  \tilde{G}_{\varphi} : T_{\varphi}\DM{q} \to T^{*}_{\varphi}\DM{q},
\end{equation}
called the \emph{flat map}, which is defined by $\tilde{G}_{\varphi}(v) : = G_{\varphi}(v,\cdot)$.
The metric is called a \emph{strong Riemannian metric} if $\tilde{G}_{\varphi}$ is a topological linear isomorphism for every $\varphi \in \DM{q}$, whereas it is called a \emph{weak Riemannian metric}
if it is only injective for some $\varphi \in \DM{q}$.

\begin{thm}\label{thm:metric-smoothness}
  Let $A$ be a $L^{2}$-symmetric, positive definite pseudo-differential operator of class $\OPS{2s}(M,TM)$ where $s\geq \frac1{2}$ and let $q>\frac{d}2+1$.
  \begin{enumerate}
    \item If $q\geq 2s$, then, the right-invariant, weak Riemannian metric
          \begin{equation*}
            G_{\varphi}(v_{1},v_{2}) = \int_{M} \left( A_{\varphi} v_{1} \cdot v_{2} \right) \varphi^{*} d\mu \,,\qquad \forall v_{1},v_{2}\in T_{\varphi}\Diff(M)
          \end{equation*}
          defined on $\DiffM$, extends to a \emph{smooth, weak Riemannian metric} on the Banach manifold $\DM{q}$.
    \item If $q = s$ and $A = B^{2}$, where $B$ is a $L^{2}$-symmetric, positive definite pseudo-differential operator of class $\OPS{s}(M,TM)$, then, the right-invariant, weak Riemannian metric defined on $\DiffM$ extends to a \emph{smooth, strong Riemannian metric} on the Banach manifold $\DM{q}$.
  \end{enumerate}
\end{thm}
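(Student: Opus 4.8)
The plan is to reduce the statement to the smoothness of the twisted map $\varphi\mapsto A_\varphi$ acting on sections of $TM$, which has already been established in Theorem~\ref{thm:tildeA-smoothness}. Write the metric in the form
\begin{equation*}
  G_\varphi(v_1,v_2) = \int_M (A_\varphi v_1\cdot v_2)\,\varphi^\ast d\mu
  = \int_M (A_\varphi v_1\cdot v_2)\, J_\varphi\, d\mu\,,
\end{equation*}
where $J_\varphi$ is the Jacobian determinant of $\varphi$ with respect to $d\mu$. Working in a local chart on $\DM{q}$ around a fixed $\varphi_0$, the tangent bundle is locally trivialized by $H^q_{\mathcal D^q}(M,TM)$, and the claim is that
\begin{equation*}
  \varphi\mapsto G_\varphi\in \mathcal L^2_{\operatorname{Sym}}(H^q_{\mathcal D^q}(M,TM),\RR)
\end{equation*}
is smooth. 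The bilinear map $(v_1,v_2)\mapsto \int_M (A_\varphi v_1\cdot v_2)J_\varphi\,d\mu$ factors as a composition of: the twisted operator $A_\varphi : H^q\to H^{q-r}$ (here $r=2s$), pointwise multiplication against $J_\varphi\in H^{q-1}$, the Riemannian pairing $TM\otimes TM\to\RR$, and integration. Each factor depends smoothly on $\varphi$: $\varphi\mapsto A_\varphi$ by Theorem~\ref{thm:tildeA-smoothness}, $\varphi\mapsto J_\varphi\in H^{q-1}(M,\RR)$ by Lemma~\ref{lem:pointwise-multiplication} together with the remark following it; and the pairing–integration step is a fixed bounded multilinear map once we know the $H^{q-r}$ factor pairs against the $H^q$ factor in $L^2$, which holds precisely because $q-r = q-2s\ge 0$ (in case~(1)) makes $H^{q-2s}\times H^q\to L^1$ continuous via Lemma~\ref{lem:pointwise-multiplication} and Cauchy–Schwarz. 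Composition of smooth maps being smooth, $G$ is a smooth section; symmetry follows from $L^2$-symmetry of $A$ (hence of $A_\varphi$ up to the Jacobian weight, which is handled by the standard change-of-variables computation), and positive-definiteness of $G_\varphi$ is inherited by right-invariance from positive-definiteness of $A$ on $\Gamma(TM)$, extended to $H^q$-sections by density. This proves part~(1).

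For part~(2) one sets $q=s$, $A=B^2$ with $B\in\OPS{s}(M,TM)$ $L^2$-symmetric positive definite. The key computation is
\begin{equation*}
  G_\varphi(v_1,v_2) = \int_M (B^2_\varphi v_1\cdot v_2)\,\varphi^\ast d\mu
  = \int_M (B_\varphi v_1\cdot B_\varphi v_2)\,\varphi^\ast d\mu + \text{(commutator term)}\,,
\end{equation*}
where one must be careful: $B^2_\varphi = B_\varphi\circ B_\varphi$, so in fact $G_\varphi(v_1,v_2) = \int_M (B_\varphi v_1\cdot B^\ast_{\varphi,\mu} B_\varphi v_2)\,\varphi^\ast d\mu$, where $B^\ast_{\varphi,\mu}$ is the adjoint with respect to the weighted inner product $\int_M(\cdot\,,\cdot)\varphi^\ast d\mu$; since $B$ is $L^2$-symmetric this adjoint equals $B_\varphi$, giving $G_\varphi(v_1,v_2)=\langle B_\varphi v_1, B_\varphi v_2\rangle_{L^2_{\varphi^\ast d\mu}}$. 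Now $B_\varphi : H^{q}=H^s \to H^{q-s}=L^2$ is a bounded isomorphism onto a closed subspace (in fact onto $L^2$, using ellipticity of $B$ and invertibility, via the isomorphism remark after the definition of $\mathcal E^r(M,TM)$, together with Lemma~\ref{lem:composition-regularity} bounding $R_\varphi$), and $\varphi\mapsto B_\varphi\in\mathcal L(H^s,L^2)$ is smooth by Theorem~\ref{thm:tildeA-smoothness} with $r=s$. Hence $G_\varphi$ is the pullback under $B_\varphi$ of the (weighted) $L^2$ inner product, so the flat map $\tilde G_\varphi$ factors as $B_\varphi^\top\circ(\text{flat map of weighted }L^2)\circ B_\varphi$, a composition of isomorphisms; smoothness in $\varphi$ follows from smoothness of $\varphi\mapsto B_\varphi$ and of $\varphi\mapsto\varphi^\ast d\mu = J_\varphi\,d\mu$, while the strong (isomorphism) property follows because each factor is a topological isomorphism $H^s\to (H^s)^\ast$.

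\textbf{Main obstacle.} The routine part is the composition-of-smooth-maps bookkeeping. The genuine difficulty is the bounded-isomorphism claim underlying part~(2): one needs that $B_\varphi : H^s(M,TM)\to L^2(M,TM)$ is a topological isomorphism for every $\varphi\in\DM{q}$ (not merely for smooth $\varphi$), with inverse depending smoothly on $\varphi$. For smooth $\varphi$ this is immediate since $B_\varphi = R_\varphi B R_{\varphi^{-1}}$ is a composition of isomorphisms; for $\varphi$ of finite Sobolev regularity, $R_\varphi$ and $R_{\varphi^{-1}}$ are only bounded on $L^2$ and on $H^s$ (Lemma~\ref{lem:composition-regularity}), not smooth as operators, so one cannot simply conjugate. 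Instead one argues that $\varphi\mapsto B_\varphi\in\mathcal L(H^s,L^2)$ is smooth (by Theorem~\ref{thm:tildeA-smoothness}) and takes invertible values on a dense set, hence on a neighbourhood of the identity by openness of the invertibles in $\mathcal L(H^s,L^2)$; the inverse is then smooth by the smoothness of operator inversion. Globalizing away from the identity uses right-invariance of $G$ and the fact that right-translation $R_\psi$ is a homeomorphism of $\DM{q}$, transporting the local smoothness statement around the group. Care must also be taken that the $L^2$-symmetry of $B$ passes to the weighted inner product $\langle\cdot,\cdot\rangle_{\varphi^\ast d\mu}$ exactly, which is the content of the change-of-variables identity $\int_M(R_\varphi Bw\cdot R_\varphi v)\varphi^\ast d\mu = \int_M(Bw\cdot v)\,d\mu$ valid a priori for smooth data and extended by continuity.
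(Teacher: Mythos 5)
Your argument is correct and follows essentially the same route as the paper's proof: part (1) reduces to the smoothness of $\varphi\mapsto A_{\varphi}$ (Theorem~\ref{thm:tildeA-smoothness}), of $\varphi\mapsto J_{\varphi}\in H^{q-1}(M,\RR)$, and to the multiplication Lemma~\ref{lem:pointwise-multiplication} using $q-2s\ge 0$; part (2) rests on the identity $\int_{M}(B_{\varphi}^{2}v_{1}\cdot v_{2})\,\varphi^{*}d\mu=\int_{M}(B_{\varphi}v_{1}\cdot B_{\varphi}v_{2})\,\varphi^{*}d\mu$, which the paper simply asserts and you correctly justify by change of variables together with the $L^{2}$-symmetry of $B$, followed by the $L^{2}\times L^{2}\to L^{1}$ pairing. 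You are in fact more careful than the paper on the one point its proof leaves implicit, namely why the flat map in case (2) is a topological isomorphism: your factorization of $\tilde G_{\varphi}$ through $B_{\varphi}$ requires $B_{\varphi}:H^{s}\to L^{2}$ to be invertible, for which you invoke ellipticity of $B$. Be aware that ellipticity is not among the stated hypotheses of item (2), and without it the coercivity $\norm{Bv}_{L^{2}}\gtrsim\norm{v}_{H^{s}}$, hence the ``strong'' conclusion, can fail (take $B$ smoothing); your added assumption is thus the natural, and apparently necessary, repair, and it holds in all of the paper's applications, where $B\in\mathcal{E}^{s}(M,TM)$. Finally, the density--openness argument you sketch for the invertibility of $B_{\varphi}$ at a fixed finite-regularity $\varphi$ is not needed: by Lemma~\ref{lem:composition-regularity} both $R_{\varphi^{-1}}:H^{s}\to H^{s}$ and $R_{\varphi}:L^{2}\to L^{2}$ are bounded with bounded inverses, so $B_{\varphi}=R_{\varphi}BR_{\varphi^{-1}}$ is an isomorphism by direct composition, while the smoothness of $\varphi\mapsto B_{\varphi}$ is supplied separately by Theorem~\ref{thm:tildeA-smoothness} with $r=s$ (legitimate since $s=q>d/2+1>1$).
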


\begin{rem}
  The inertia operator of the fractional order Sobolev metric $A = (1+\Delta)^{s}$ satisfies the assumptions of the above theorem for $s \ge 1/2$.
\end{rem}

We will now give the proof of Theorem~\ref{thm:metric-smoothness}.

\begin{proof}[Proof of Theorem~\ref{thm:metric-smoothness}]
  Let $r=2s\geq 1$. To prove that $G$ extends to a smooth Riemannian metric on $\DM{q}$ it remains to show that $G$ depends smoothly on the foot point
  $\varphi$. Using the assumptions of item (1) and Theorem~\ref{thm:tildeA-smoothness} it follows that the mapping
  \begin{equation}
    \left\{
    \begin{aligned}
      T\DM{q} \times_{\DM{q}} T\DM{q} & \to H^{q-r}(M,\RR)               \\
      (v,w)                           & \mapsto A_{\varphi}(v) \cdot w .
    \end{aligned}
    \right.
  \end{equation}
  is smooth. Here we used that
  \begin{equation}
    (v,w) \mapsto v\cdot w, \qquad H^{q-r}(M,TM) \times H^{q}(M,TM) \to H^{q-r}(M,\RR)
  \end{equation}
  is smooth for $q > d/2+1$ and $q \geq r$. Using that the Radon--Nikodym derivative $\frac{\varphi^{*}\mu}{\mu}\in H^{q-1}(M)$ -- since $\varphi\in \DM{q}$ the result follows by the Sobolev multiplication Lemma~\ref{lem:pointwise-multiplication}.

  To prove item (2) we rewrite the metric as
  \begin{align*}
    G_{\varphi}(v_{1},v_{2}) & = \int_{M} \left( A_{\varphi} v_{1} \cdot v_{2} \right) \varphi^{*} d\mu \\ & = \int_{M} \left( (B_{\varphi})^{2} v_{1} \cdot v_{2} \right) \varphi^{*} d\mu = \int_{M} \left( B_{\varphi} v_{1} \cdot B_{\varphi} v_{2} \right) \varphi^{*} d\mu  .
  \end{align*}
  Using that
  \begin{equation}
    (v,w) \mapsto v\cdot w, \qquad L^{2}(M,TM) \times L^{2}(M,TM) \to L^1(M,\RR)
  \end{equation}
  is smooth, the smoothness of the metric follows similar as for item (1).
\end{proof}

\subsection{Smoothness of the extended spray}

We will now prove smoothness of the extended spray on $T\DM{q}$, when the inertia operator $A$ is in the class $\mathcal{E}^{2s}(M,TM)$, as defined in~\ref{def:class-Er}.

\begin{thm}\label{thm:smoothness-spray}
  Let $s \ge \frac1{2}$ and $q > 1 + d/2$, with $q\ge 2s$. Let $A$ be a pseudo-differential operator of class $\mathcal{E}^{2s}(M,TM)$. Then the geodesic spray of the extended metric~\eqref{eq:definition-metric} on $\DM{q}$ is a smooth vector field on $T\DM{q}$.
\end{thm}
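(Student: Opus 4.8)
The strategy is the classical Ebin--Marsden argument: we exhibit the geodesic spray $S_\varphi(v)$ explicitly as a smooth vector field on $T\DM{q}$ by writing down its principal part in a chart, and then verifying that every factor appearing in that expression is a smooth map between the relevant Hilbert manifolds and spaces. The starting point is the Euler--Arnold equation \eqref{eq:Diff-Euler-Arnold}: if $\varphi(t)$ is a geodesic with Eulerian velocity $u = R_{\varphi^{-1}}\varphi_t$, then $\varphi_{tt}$ is obtained by right-translating back the right-hand side of \eqref{eq:Diff-Euler-Arnold}. Concretely, writing everything in terms of the twisted operator $A_\varphi$, one gets a formula of the schematic form
\begin{equation*}
  \varphi_{tt} = S_\varphi(\varphi_t) = \Gamma_\varphi(\varphi_t,\varphi_t),
\end{equation*}
where $\Gamma_\varphi$ is quadratic in $v = \varphi_t$ and involves $A_\varphi^{-1}$ applied to an expression built from $A_\varphi v$, covariant derivatives of $v$, the Jacobian $J_\varphi$ and its inverse, and the Christoffel symbols of $M$ pulled back by $\varphi$. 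The claim is that $(\varphi,v)\mapsto \Gamma_\varphi(v,v)$ is smooth from $T\DM{q}$ into the model space, equivalently that the second-order ODE $\varphi_{tt} = S_\varphi(\varphi_t)$ has a smooth right-hand side.

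The key steps, in order, are as follows. First, derive the explicit chart expression for the spray: start from the weak metric \eqref{eq:definition-metric}, compute the Euler--Lagrange equations (or equivalently use the momentum formulation \eqref{eq:EPDiff} and right-translate), and collect the result into the form $A_\varphi \varphi_{tt} = (\text{quadratic, lower-order terms in } \varphi, \varphi_t)$. Second, observe that the ellipticity assumption $A \in \mathcal{E}^{2s}(M,TM)$ guarantees that $A : H^q(M,TM) \to H^{q-2s}(M,TM)$ is a bounded isomorphism, and combine this with Theorem~\ref{thm:tildeA-smoothness}: the map $\varphi \mapsto A_\varphi$ is a smooth section of $\mathcal{L}(H^q_{\DM{q}}(M,TM), H^{q-2s}_{\DM{q}}(M,TM))$, and since $A_\varphi = R_\varphi A R_{\varphi^{-1}}$ is again a bounded isomorphism (conjugate of one), inversion in the Banach algebra of bounded operators is smooth, so $\varphi \mapsto A_\varphi^{-1}$ is smooth into $\mathcal{L}(H^{q-2s}_{\DM{q}}, H^q_{\DM{q}})$ as well. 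Third, check that the remaining building blocks of $\Gamma_\varphi$ — the right-translated covariant derivative $\overline\nabla$ acting on $v$, the terms $(\nabla u)^t A u$ and $(\dive u)Au$ translated by $R_\varphi$, and the Jacobian factors $J_\varphi$, $1/J_\varphi$ coming from $\varphi^*\mu$ — are smooth in $(\varphi,v)$; here one invokes Lemma~\ref{lem:pointwise-multiplication} (Sobolev multiplication), Lemma~\ref{lem:inverse-Jacobian-smoothness} (smoothness of $f \mapsto f/J_\varphi$), and the fact that the pulled-back Christoffel symbols depend smoothly on $\varphi \in \DM{q}$ via composition (Lemma~\ref{lem:composition-regularity}) combined with $q > d/2+1$. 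One must track the Sobolev indices carefully: $v \in H^q$, $A_\varphi v \in H^{q-2s}$, differentiating once more would drop to $H^{q-2s-1}$, but the structure of the EPDiff equation is such that after applying $A_\varphi^{-1}$ one lands back in $H^q$ — this is exactly the "no loss of derivatives" mechanism, and it requires $r = 2s \ge 1$ so that the top-order term $\nabla_u A u$, of order $2s+1$, is compensated. Fourth, assemble these pieces: a composition and product of smooth maps between Hilbert manifolds is smooth, so $S_\varphi(v) = \Gamma_\varphi(v,v)$ is smooth, hence a smooth vector field on $T\DM{q}$.

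The main obstacle is genuinely Step 2–3, i.e. controlling the order count so that the spray really takes values in the tangent space $T_\varphi\DM{q} \cong H^q$ rather than in a space with fewer derivatives. The dangerous term is $\nabla_u(Au)$, which a priori lies in $H^{q-2s-1}$; one must show that the commutator structure — precisely the fact, established in Section~\ref{sec:smoothness-conjugates-pseudo-diff}, that $[\nabla_u, A_\varphi]$ is a pseudo-differential operator of order only $2s$ rather than $2s+1$ — conspires with the other top-order contributions so that the worst terms cancel, leaving something of order $2s$ that $A_\varphi^{-1}$ maps back into $H^q$. This is where Theorem~\ref{thm:tildeA-smoothness} does the real work: smoothness of $\varphi \mapsto A_\varphi$ as a map into $\mathcal{L}(H^q, H^{q-r})$ already encodes that the conjugation does not cost an extra derivative, and differentiating it (Remark~\ref{rem:fundamental-observation}, $A_1 = [\nabla_\bullet, A]$) is precisely what furnishes the needed commutator gain. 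Once the bookkeeping is set up, the rest is a routine, if lengthy, verification that each constituent map is smooth, entirely parallel to the integer-order case treated in \cite{EM1970, EK2014}.
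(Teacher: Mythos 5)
Your proposal is correct and follows essentially the same route as the paper: right-translate the Euler--Arnold equation into the form $S_\varphi(v)=A_\varphi^{-1}\{[A_\varphi,\overline\nabla_v]v-(\cdots)\}$, obtain smoothness of $\varphi\mapsto A_\varphi$ and $A_\varphi^{-1}$ from Theorem~\ref{thm:tildeA-smoothness} together with ellipticity and Banach-algebra inversion, treat the lower-order terms by Sobolev multiplication, and absorb the dangerous term $\nabla_u Au$ into the commutator $[A,\nabla_u]u$, which is exactly the first derivative of the twisted map and hence bounded of order $2s$. The only difference is presentational: the paper works invariantly with the induced connections $\overline\nabla$ and $\widetilde\nabla$ and the Whitney-sum identification of $T^2\DM{q}$ (to handle non-parallelizable $M$), where you work in charts with pulled-back Christoffel symbols.
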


\begin{proof}
  Let $r=2s\geq 1$. The proof below is a modified version of the arguments used in \cite{BEK2015}. The derivation of the spray in the case of a compact manifold was detailed in~\cite[Section 3.1]{Kol2017}, see also \cite{MP2010}. When $\DM{q}$ is not parallelizable, a few explanations are required first. In that case, we need to introduce first an auxiliary connection on $T\Diff(M)$, which is induced by the Levi-Civita connection $\nabla^{g}$ on $M$ and was first considered in~\cite{FG1989} and then in~\cite{Bru2018} (see also~\cite[Section 3]{Mod2014}). We shall denote this covariant derivative by $\overline{\nabla}$. The remarkable observation is that if $\varphi(t)$ is a path on $\Diff(M)$ and $v(t)$ is a vector field on $\Diff(M)$ defined along the path $\varphi(t)$, we have
  \begin{equation}\label{eq:functorial-property}
    (\overline{\nabla}_{{\partial_t}}v)(t,x) = \nabla^{g}_{\varphi_{t}(t,x)} v(\cdot, x).
  \end{equation}
  Moreover, one can recast the Euler-Arnold equation \eqref{eq:Diff-Euler-Arnold} in the form
  \begin{equation}\label{materialderivative}
    \overline\nabla_{{\partial_t}}\dot\varphi = S_{\varphi}(\dot{\varphi})= R_{\varphi} \circ S\circ
    R_{\varphi^{-1}} (\dot{\varphi}) ,
  \end{equation}
  where
  \begin{equation*}\label{eq:spray}
    S(u) = A^{-1} \left\{ [A,\nabla_{u}] u - (\nabla u )^{t} Au - (\dive u) Au \right\}, \qquad u\in \Vect(M).
  \end{equation*}
  Note, that this formula requires the invertibility of the operator $A$, which is guaranteed due to the ellipticity and positivity assumptions on the class of operators. Let us now identify the subbundle of $2$-velocities
  \begin{equation*}
    T^2\DM{q} = \set{\xi\in TT\DM{q} : \ \bar \pi_{TM}(\xi)=T\bar\pi_M(\xi)}
  \end{equation*}
  with the Whitney sum $T\DM{q}\oplus T\DM{q}$ via $(\varphi,\dot\varphi, \ddot\varphi)\to (\dot\varphi,\overline\nabla_{\dot \varphi}\dot\varphi)$, as in \cite{Mod2014}.  Equation \eqref{materialderivative} corresponds under this identification to the spray equation and the geodesic spray can be now interpreted as a bundle map
  \begin{equation*}
    v\to (v, S_{\varphi}(v)), \qquad T\DM{q}\to T\DM{q}\oplus T\DM{q}.
  \end{equation*}
  In this way we can argue, in an elegant manner, the smoothness of the extended spray by investigating the three summands in $S$ separately. The smoothness of the bundle map
  \begin{equation*}
    \tilde A:  v\to A_{\varphi}(v), \qquad H^{q}_{\mathcal{D}^{q}}(M,TM) \to H^{q-r}_{\mathcal{D}^{q}}(M,TM).
  \end{equation*}
  for $q>\frac{d}2+1$ and $s\ge r$ follows from Theorem~\ref{thm:tildeA-smoothness}. The smoothness of
  \begin{equation*}
    \tilde{A}^{-1} : v\mapsto A^{-1}_{\varphi}(v), \qquad H^{q-r}_{\mathcal{D}^{q}}(M,TM) \to T\DM{q}
  \end{equation*}
  follows from the same arguments as in~\cite{BEK2015}. The smoothness of the mappings
  \begin{equation*}
    v \mapsto Q^{k}_{\varphi}(v) = \left(R_{\varphi}\circ Q^{k} \circ R_{\varphi^{-1}}\right)(v), \qquad k = 2,3,
  \end{equation*}
  where
  \begin{equation*}
    Q^{2}(u) : = (\nabla u)^t Au, \quad \text{and} \quad Q^{3}(u) : = (\dive u) Au
  \end{equation*}
  when $u\in\Vect^{q}(M)$, follows from the same line of reasoning as in~\cite{BEK2015,Kol2017} or~\cite{Ebi1970}. It remains to show the smoothness of $v \mapsto Q^{1}_{\varphi}(v)$, where $Q^{1}(u) : = [\nabla_{u},A]u$. Therefore, we note that the covariant derivative $\overline{\nabla}$ extends to a smooth covariant derivative $\widetilde\nabla$ on the vector bundle
  \begin{equation*}
    \mathcal{L}(H^q_{\DM{q}},H^{q-r}_{\DM{q}}),
  \end{equation*}
  see~\cite{BHM2019}.
  This allows us to identify the term $Q_{1}$ as the first derivative of $\tilde{A}$ since we have
  \begin{equation}
    (\widetilde\nabla_v \tilde{A})v = \overline\nabla_v(\tilde{A} v) - \tilde{A}(\overline \nabla_v v) = Q^{1}_{\varphi}(v).
  \end{equation}
  Thus the smoothness of $Q^{1}_{\varphi}(v)$ follows from the smoothness of $\tilde{A}$, which concludes the proof.
\end{proof}

\begin{rem}
  Note, that for strong Riemannian metrics the smoothness of the spray follows automatically, i.e., for metrics that satisfy the assumptions of item (2) in Theorem~\ref{thm:metric-smoothness}, we obtain the smoothness of the spray on $\DM{q}$ for $q = s$. We will later use this to obtain a global existence result.
\end{rem}

\section{Local and global well-posedness of the EPDiff equation}
\label{sec:well-posedness}

In this section we will prove our main theorem concerning local and global well-posedness properties of the geodesic equation.

\subsection{Local and global well-posedness in the Sobolev category}

We will first formulate the result in the Sobolev category and will later see, that many of the properties continue to hold in the smooth category.

\begin{thm}\label{thm:smooth-flow-Hq}
  Let $s \ge \frac12$ and $q > 1 + d/2$ with $q \ge 2s$. Let $A$ be a pseudo-differential operator in the class $\mathcal{E}^{2s}(M,TM)$, defined on the tangent bundle $TM$ of a compact manifold $M$ and let $G$ be the right invariant metric induced by $A$.
  We have:
  \begin{enumerate}
    \item The geodesic equations of the metric $G$ on $\DM{q}$ are locally well-posed, i.e., given any $v_{0}\in T\DM{q}$, there exists a unique non-extendable geodesic
          \begin{equation*}
            v \in C^{\infty}(J,T\DM{q})
          \end{equation*}
          defined on some open interval $J$, which contains $0$ and such that $v(0) = v_{0}$.
    \item The corresponding Euler-Arnold equation has, for any initial data $u_{0}\in \Vect^{q}(M)$, a unique non-extendable smooth solution
          \begin{equation*}
            u\in C^{0}(J,\Vect^{q}(M)) \cap C^{1}(J,\Vect^{q-1}(M))
          \end{equation*}
          defined on $J$.
  \end{enumerate}
\end{thm}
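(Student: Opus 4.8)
The plan is to deduce Theorem~\ref{thm:smooth-flow-Hq} from the smoothness of the geodesic spray established in Theorem~\ref{thm:smoothness-spray}, exactly along the lines of the Ebin--Marsden strategy. First I would observe that by Theorem~\ref{thm:smoothness-spray} the geodesic spray $S$ of the extended metric is a smooth vector field on the Hilbert manifold $T\DM{q}$ (using the ellipticity and positivity of $A \in \mathcal{E}^{2s}(M,TM)$, which is what makes $A^{-1}$ available and keeps the construction inside the correct operator classes). The classical theorem on existence, uniqueness and smooth dependence for ODEs on Banach (here Hilbert) manifolds then applies: through every $v_{0} \in T\DM{q}$ there passes a unique integral curve of $S$, defined on a maximal open interval $J \ni 0$, depending smoothly on $t$; this integral curve is precisely the geodesic $v(t)$ with $v(0)=v_{0}$, and it is automatically $C^{\infty}$ in $t$ since $S$ is smooth and one can bootstrap. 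This gives item (1).

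For item (2) I would pass from the geodesic $\varphi(t) \in \DM{q}$ to its Eulerian velocity $u(t) := R_{\varphi(t)^{-1}} \dot\varphi(t)$. Here one must be careful: composition with $\varphi^{-1}$ only has the regularity stated in Lemma~\ref{lem:composition-regularity}, so while $\dot\varphi(t) \in H^{q}_{\DM{q}}(M,TM)$ depends smoothly on $t$, the pushed-down field $u(t)$ will in general only be continuous in $t$ with values in $\Vect^{q}(M)$ — this is the familiar "loss of one derivative in time" phenomenon for the right-trivialised flow. One then checks, using~\eqref{materialderivative} and the definition of the spray, that $u(t)$ solves the Euler--Arnold equation~\eqref{eq:Diff-Euler-Arnold}; reading off that equation, $u_{t}$ is expressed through $A^{-1}$ applied to first-order-in-$u$ terms times $Au$, which lands in $\Vect^{q-1}(M)$, giving $u \in C^{0}(J,\Vect^{q}(M)) \cap C^{1}(J,\Vect^{q-1}(M))$. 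Uniqueness and non-extendability transfer from the geodesic picture by the bijective correspondence $\varphi \leftrightarrow u$ (with $\varphi(t)$ recovered from $u$ by solving the flow ODE $\dot\varphi = R_{\varphi} u$), and the interval $J$ is the same.

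The main obstacle in this argument is not item (2), which is essentially bookkeeping once the spray is smooth, but rather making sure that everything needed from Theorem~\ref{thm:smoothness-spray} is genuinely in place: namely that $\tilde{A}$, $\tilde{A}^{-1}$ and the lower-order bundle maps $Q^{2}_{\varphi}, Q^{3}_{\varphi}$, as well as the commutator term $Q^{1}_{\varphi} = (\widetilde\nabla_{v}\tilde{A})v$, are smooth as sections of the appropriate $\mathcal{L}(H^{q}_{\DM{q}}, H^{q-r}_{\DM{q}})$-bundles — all of which has been arranged under the hypotheses $s \ge \tfrac12$, $q > 1 + d/2$, $q \ge 2s$. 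A secondary point requiring care is the identification of $T^{2}\DM{q}$ with $T\DM{q} \oplus T\DM{q}$ via the auxiliary connection $\overline\nabla$ induced by the Levi-Civita connection on $M$, so that the second-order ODE "geodesic equation" is legitimately an ODE for the spray vector field on $T\DM{q}$; once this identification is fixed (as in~\cite{Mod2014,Kol2017}), the Picard--Lindelöf theorem on Hilbert manifolds closes the argument.
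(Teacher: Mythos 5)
Your proposal is correct and follows essentially the same route as the paper: item (1) is the Picard--Lindel\"of theorem on Hilbert manifolds applied to the smooth spray of Theorem~\ref{thm:smoothness-spray}, and item (2) passes to the Eulerian velocity $u = \dot\varphi\circ\varphi^{-1}$ with the expected loss of one derivative. The only cosmetic difference is that the paper obtains $u \in C^{1}(J,\Vect^{q-1}(M))$ directly from the $C^{1}$-regularity of the composition and inversion maps $\DM{q}\to\DM{q-1}$ (citing Inci--Kappeler--Topalov), whereas you read it off from the structure of the Euler--Arnold equation itself; both arguments are valid.
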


\begin{proof}
  $(1)$ follows directly from the Picard Lindelöf theorem (or Cauchy-Lipschitz theorem) on Banach manifolds, using that the geodesic spray is a smooth vector field on $T\DM{q}$ as shown in Theorem~\ref{thm:smoothness-spray}.

  To prove $(2)$, let $u_{0}\in \Vect^{q}(M)$. Then by $(1)$, there exists a curve
  \begin{equation*}
    v \in C^{\infty}(J,T\DM{q})
  \end{equation*}
  defined on some maximal time interval $J$ which contains $0$ and such that $v(0) = u_{0}$. Consider now the Eulerian velocity
  \begin{equation*}
    u(t) = v(t) \circ \varphi(t)^{-1}
  \end{equation*}
  where $\pi: T\DM{q} \to \DM{q}$ is the canonical projection and $\varphi(t) = \pi (v(t))$. Note that
  \begin{equation*}
    u\in C^{0}(J,\Vect^{q}(M)),
  \end{equation*}
  because the mapping
  \begin{equation*}
    (\varphi,v) \mapsto v \circ \varphi, \qquad \DM{q} \times T\DM{q} \to T\DM{q}
  \end{equation*}
  and the inversion
  \begin{equation*}
    \varphi \mapsto \varphi^{-1}, \qquad \DM{q} \to \DM{q}
  \end{equation*}
  are continuous, see~\cite[Theorem 1.2]{IKT2013}. Moreover, the mappings
  \begin{equation*}
    (\varphi,v) \mapsto v \circ \varphi, \qquad \DM{q-1} \times T\DM{q} \to T\DM{q-1}
  \end{equation*}
  and
  \begin{equation*}
    \varphi \mapsto \varphi^{-1}, \qquad \DM{q} \to \DM{q-1}
  \end{equation*}
  are $C^{1}$, see~\cite[Theorem 1.2]{IKT2013}.
\end{proof}

The right invariance of the metric even allows us to obtain global well-posedness of the geodesic equation and completeness of the corresponding metric space.

\begin{thm}
  Let $A = B^{2}$, where $B$ is of class $\mathcal{E}^{s}(M,TM)$ with $s > 1 + d/2$. Then:
  \begin{enumerate}
    \item The geodesic equations on $\DM{s}$ and the corresponding Euler-Arnold equation are globally well-posed.
    \item The space $\DM{s}$ equipped with the geodesic distance of the metric $G$ is a complete metric space.
    \item Any two elements in the connected component of the identity in $\DM{s}$ can be connected by a minimizing geodesic.
  \end{enumerate}
\end{thm}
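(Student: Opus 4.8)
The plan is to prove the three assertions as consequences of the machinery already developed, following the template of the integer-order theory in~\cite{EM1970,BEK2015,Kol2017}. The crucial structural input is that, when $A=B^{2}$ with $B\in\mathcal{E}^{s}(M,TM)$ and $s>1+d/2$, the extended metric $G$ on $\DM{s}$ is a \emph{strong} Riemannian metric (item (2) of Theorem~\ref{thm:metric-smoothness}), that it is smooth (same theorem) and right-invariant by construction, and that the geodesic spray is a smooth vector field on $T\DM{s}$ (by the remark following Theorem~\ref{thm:smoothness-spray}, or directly from Theorem~\ref{thm:smoothness-spray} since $s\ge 2\cdot\frac{s}{2}$ applies with the role of $2s$ played by $s$ here). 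Because $G$ is strong and smooth, $\DM{s}$ becomes a genuine Riemannian Hilbert manifold, so one has at one's disposal the full infinite-dimensional Hopf--Rinow-type results of Ekeland and of McAlpin (see also the discussion in~\cite{BEK2015,Bru2018}).

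For item (1), local well-posedness of the geodesic equation on $\DM{s}$ is Theorem~\ref{thm:smooth-flow-Hq}(1) (applicable since $q=s>1+d/2$ and $q\ge 2\cdot\frac s2$ with $B$ in place of $A$), and the corresponding statement for the Euler--Arnold equation is Theorem~\ref{thm:smooth-flow-Hq}(2); what remains is to upgrade "local" to "global." Here I would invoke the standard no-loss-of-derivatives argument: by right invariance, the Riemannian exponential map $\exp_{\id}$ is defined on a neighbourhood of $0$ in $\Vect^{s}(M)$ independent of the foot point, and a geodesic emanating from $\id$ with initial velocity $u_{0}$ stays, for all times for which it exists, in a ball of $\DM{s}$ whose radius is controlled by $G_{\id}(u_{0},u_{0})$, since the metric is right-invariant and hence the energy $\|u(t)\|_{B}^{2}:=G_{\id}(u(t),u(t))$ is conserved along the geodesic. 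This a priori bound on the Sobolev norm of $u(t)$ prevents finite-time blow-up and yields global existence on $\DM{s}$; transporting back to the identity component gives global solutions of the Euler--Arnold equation. I expect this step to be essentially a citation plus a short conservation-of-energy computation, so it is not the main obstacle.

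For item (2), completeness of $(\DM{s},\operatorname{dist}_{G})$ as a metric space, the key point is that a Cauchy sequence for the geodesic distance is, by right invariance and the equivalence of $\operatorname{dist}_{G}$ with the Hilbert-manifold topology on bounded sets, also Cauchy in the $H^{s}$-topology of $\DM{s}$; since $\DM{s}$ is a closed subset of the Hilbert space $\Vect^{s}(M)$ (cut out by the open condition $J_{\varphi}>0$, and Cauchy sequences staying in a $\operatorname{dist}_{G}$-ball stay away from the boundary), the limit lies in $\DM{s}$. The non-trivial ingredient is precisely the two-sided comparison between $\operatorname{dist}_{G}$ and the ambient norm on metric balls; this follows from the global existence in item (1) together with the smoothness and strongness of $G$, exactly as in~\cite[Section 6]{BEK2015}. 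I regard establishing this comparison estimate uniformly on bounded sets as the technical heart of item (2).

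Finally, item (3) is then immediate from items (1) and (2) via the infinite-dimensional Hopf--Rinow theorem: a strong Riemannian Hilbert manifold which is metrically complete and geodesically complete (global well-posedness) has the property that any two points in the same connected component are joined by a minimizing geodesic; see Ekeland's theorem and~\cite{Bru2018}. Thus the only genuinely new work beyond quoting Theorems~\ref{thm:metric-smoothness}, \ref{thm:smoothness-spray} and~\ref{thm:smooth-flow-Hq} is (a) the conserved-energy a priori estimate giving completeness of the geodesics, and (b) the norm-comparison on bounded sets giving metric completeness; both are by now standard once the smoothness-of-spray result is in hand, and the main obstacle of the whole program — smoothness of the conjugated pseudo-differential operator, Theorem~\ref{thm:smoothness-of-conjugates} — has already been surmounted.
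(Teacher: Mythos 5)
The paper disposes of all three items in one line by citing \cite[Corollary 7.6]{BV2017}, whose hypotheses are exactly that the metric extends to a smooth, strong, right-invariant Riemannian metric on $\DM{s}$ (supplied here by Theorem~\ref{thm:metric-smoothness}(2)). Your proposal instead reconstructs the content of that corollary. For items (1) and (2) your reconstruction is essentially the proof of the cited result and is sound: conservation of $G_{\id}(u(t),u(t))\sim\norm{u(t)}_{H^{s}}^{2}$ along geodesics, the resulting control of the flow $\varphi(t)$ of the time-dependent vector field $u(t)$ in $\DM{s}$ on finite time intervals, and the two-sided comparison between $\operatorname{dist}_{G}$ and the ambient $H^{s}$-distance on metric balls are precisely the ingredients of \cite{BV2017} (and of \cite[Section~6]{BEK2015} in the $\RR^{d}$ case). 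One small correction: the spray smoothness on $T\DM{s}$ does \emph{not} follow from applying Theorem~\ref{thm:smoothness-spray} "with the role of $2s$ played by $s$" --- the inertia operator is $A=B^{2}$, of order $2s$, so that theorem would require $q\ge 2s$, which fails at $q=s$. The correct route is the one you name first: a smooth strong metric on a Hilbert manifold automatically has a smooth spray (the remark after Theorem~\ref{thm:smoothness-spray}).

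The genuine gap is in item (3). There is no infinite-dimensional Hopf--Rinow theorem of the kind you invoke: Atkin constructed a complete, strong Riemannian Hilbert manifold containing two points joined by \emph{no} geodesic, and Ekeland's theorem only yields minimizing geodesics between a generic (dense $G_{\delta}$) set of pairs of endpoints, not between arbitrary ones. So "metrically complete $+$ geodesically complete $\Rightarrow$ any two points are joined by a minimizing geodesic" is false in this setting, and item (3) cannot be deduced that way. The existence of minimizers between \emph{arbitrary} points in the identity component is a nontrivial part of \cite[Corollary 7.6]{BV2017}: it is proved by the direct method, minimizing the energy over paths, using that bounded sets in $H^{s}$ are weakly compact, that the right-invariant energy is weakly lower semicontinuous, and that weak limits of flows with $H^{s}$-bounded velocities are again flows in $\DM{s}$ --- structure specific to the diffeomorphism group, not available on a general Riemannian Hilbert manifold. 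You need either to cite that result or to carry out this variational argument; Hopf--Rinow will not do it for you.
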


\begin{proof}
  This result follows directly from~\cite[Corollary 7.6]{BV2017}, using that the metric extends to a smooth, strong, right-invariant metric by Theorem~\ref{thm:metric-smoothness}.
\end{proof}

In the following we will show that some of the well-posedness and completeness statements continue to hold in the smooth category.

\subsection{Local and global well-posedness in the smooth category}
\label{sec:globalwellposedness_smooth}

In their seminal article~\cite{EM1970} Ebin and Marsden made the remarkable observation that, due to the right-invariance of the metric, the maximal interval of existence is independent of the parameter $q$. This enables us to avoid Nash--Moser type schemes to prove local existence of smooth geodesics in the smooth category.

\begin{cor}\label{cor:smooth_flow}
  Let $s \ge \frac12$ and $A$ be a pseudo-differential operator in the class $\mathcal{E}^{2s}(M,TM)$, defined on the tangent bundle $TM$ of a compact manifold $M$ and let $G$ be the right invariant metric induced by $A$. Then:
  \begin{enumerate}
    \item The geodesic equations of the metric $G$ on $\DiffM$ are locally well-posed, \textit{i.e.}, given any $v_{0}\in T\DiffM$, there exists a unique non-extendable geodesic
          \begin{equation*}
            v \in C^{\infty}(J,T\DiffM)
          \end{equation*}
          defined on some maximal open time interval $J$, which contains $0$;
    \item The corresponding Euler-Arnold equation has, for any initial data $u_{0}\in \Vect(M)$, a unique non-extendable smooth solution
          \begin{equation*}
            u \in C^{\infty}(J,\Vect(M))
          \end{equation*}
          defined on $J$.
  \end{enumerate}
\end{cor}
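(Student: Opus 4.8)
The plan is to deduce Corollary~\ref{cor:smooth_flow} from Theorem~\ref{thm:smooth-flow-Hq} using the classical ``no-loss-no-gain'' regularity argument of Ebin--Marsden, which exploits the right-invariance of the spray. First I would fix $q_{0} > 1 + d/2$ with $q_{0} \ge 2s$ and, given smooth initial data $v_{0} \in T\DiffM \subset T\DM{q}$ for every $q \ge q_{0}$, invoke Theorem~\ref{thm:smooth-flow-Hq}(1) to obtain, for each such $q$, a unique non-extendable geodesic $v^{q} \in C^{\infty}(J_{q}, T\DM{q})$ with $v^{q}(0) = v_{0}$, defined on a maximal open interval $J_{q} \ni 0$. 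By uniqueness, higher-regularity solutions are restrictions of lower-regularity ones, so $J_{q} \subseteq J_{q_{0}}$ and $v^{q} = v^{q_{0}}|_{J_{q}}$ whenever $q \ge q_{0}$. The heart of the matter is to prove the reverse inclusion $J_{q_{0}} \subseteq J_{q}$, i.e.\ that the maximal interval of existence does \emph{not} shrink as $q$ increases; granting this, $J := \bigcap_{q \ge q_{0}} J_{q} = J_{q_{0}}$ is a single open interval and $v := v^{q_{0}}$ lies in $\bigcap_{q} C^{\infty}(J, T\DM{q}) = C^{\infty}(J, T\DiffM)$, which is claim (1). Claim (2) then follows by setting $u(t) = v(t)\circ\varphi(t)^{-1}$ as in the proof of Theorem~\ref{thm:smooth-flow-Hq}(2); since now $v(t)$ is smooth for every Sobolev order and composition/inversion preserve each order, $u \in C^{\infty}(J, \Vect(M))$, and it satisfies the Euler--Arnold equation \eqref{eq:Diff-Euler-Arnold}.

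The key step, persistence of the interval of existence, is proved by the following bootstrap. Suppose $v^{q_{0}}$ is defined on $J_{q_{0}} = (T_{-}, T_{+})$ but $v^{q}$ is only defined on a strictly smaller interval, say it cannot be extended past some $T < T_{+}$ with $T$ in the \emph{interior} of $J_{q_{0}}$. Write $v^{q_{0}}(t) = (\varphi(t), V(t))$ with $V(t) = \overline{\nabla}_{\partial_{t}}\varphi$ the covariant velocity. On the compact subinterval $[0,T] \subset J_{q_{0}}$ the curve $\varphi(t)$ stays in a compact subset of $\DM{q_{0}}$, and the Eulerian velocity $u^{q_{0}}(t) = R_{\varphi(t)^{-1}}V(t)$ is bounded in $\Vect^{q_{0}}(M)$, hence in $\Vect^{q_{0}}(M)$ and in particular its $C^{1}$-norm is bounded. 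The Euler--Arnold equation written for $u$ reads $u_{t} = -A^{-1}\{\nabla_{u}Au + (\nabla u)^{t}Au + (\dive u)Au\}$; interpreting $m = Au$, the momentum satisfies a transport-type equation $m_{t} + \nabla_{u}m + (\nabla u)^{t}m + (\dive u)m = 0$ along the flow of $u$. A Gronwall estimate on $\norm{m(t)}_{H^{q-2s}}$, in which the coefficients are controlled by $\norm{u}_{C^{1}}$ (which is already bounded from the $q_{0}$-solution, using $q_{0} > 1 + d/2$ and Lemma~\ref{lem:sobolev-embedding}) together with the algebra property of $H^{q-2s}$ when $q - 2s > d/2$ — and a standard commutator/paraproduct argument when $q - 2s \le d/2$, using Lemma~\ref{lem:pointwise-multiplication} and the mapping properties of $A, A^{-1}$ from Theorem~\ref{thm:tildeA-smoothness} — shows that $\norm{m(t)}_{H^{q-2s}}$, and hence $\norm{u(t)}_{H^{q}}$, stays bounded on $[0,T]$. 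Transporting back to the group, this bounds $\norm{V(t)}_{H^{q}}$ and keeps $\varphi(t)$ in a bounded (hence, by the ODE on the Hilbert manifold, relatively compact in the relevant topology) subset of $\DM{q}$. By the maximality criterion for ODE solutions on Banach manifolds, $v^{q}$ extends past $T$, a contradiction. Therefore $J_{q} = J_{q_{0}}$ for all $q \ge q_{0}$.

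I expect the main obstacle to be making the Gronwall bootstrap fully rigorous at the endpoint of the Sobolev scale, i.e.\ controlling $\norm{Au(t)}_{H^{q-2s}}$ when $q - 2s$ is small (possibly negative down to $0$ when $q = 2s$), since then $H^{q-2s}$ is not an algebra and one must use the Kato--Ponce / commutator estimates — precisely the type of estimates developed in Section~\ref{sec:smoothness-conjugates-pseudo-diff} and Appendix~\ref{sec:Sobolev-boundedness} — rather than naive products. The cleanest packaging avoids PDE estimates altogether: one argues directly on the ODE $\overline{\nabla}_{\partial_{t}}\dot\varphi = S_{\varphi}(\dot\varphi)$ on $T\DM{q}$, using that the spray $S_{\varphi}$, its principal part being $A_{\varphi}^{-1}$ composed with the smooth bundle maps $\tilde A, Q^{1}, Q^{2}, Q^{3}$ from Theorem~\ref{thm:smoothness-spray}, is a smooth vector field whose local flow, restricted to the lower-regularity manifold $T\DM{q_{0}}$, coincides with the $q_{0}$-flow; a boundedness-of-the-spray estimate along the already-known $q_{0}$-trajectory then feeds the standard ``solution stays in a compact set $\Rightarrow$ can be continued'' principle at level $q$. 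Either route works; the second is more in the spirit of the rest of the paper and sidesteps the delicate endpoint multiplication estimates.
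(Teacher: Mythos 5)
Your overall reduction is exactly the one the paper uses: obtain the geodesic at each Sobolev level $q\ge q_{0}:=\max(2s,\,d/2+1+\varepsilon)$ from Theorem~\ref{thm:smooth-flow-Hq}, show the maximal interval $J_{q}$ is independent of $q$, intersect, and then get part (2) from the smoothness of composition and inversion on $\DiffM$ (as opposed to their mere continuity on $\DM{q}$). The paper, however, disposes of the crucial step --- the persistence of the interval of existence --- by citing the no-loss-no-gain lemma of Ebin--Marsden \cite[Theorem~12.1]{EM1970}, whereas you attempt to prove it, and both of your proposed proofs have genuine gaps. The Gronwall route does not close: to propagate $\norm{m(t)}_{H^{q-2s}}$ along the transport equation $m_{t}+\nabla_{u}m+(\nabla u)^{t}m+(\dive u)m=0$ with only $C^{1}$ control on $u$ coming from the $q_{0}$-solution, you need a Kato--Ponce commutator bound of the form $\norm{[\Lambda^{q-2s},u\cdot\nabla]m}_{L^{2}}\lesssim \norm{\nabla u}_{L^{\infty}}\norm{m}_{H^{q-2s}}+\norm{u}_{H^{q-2s}}\norm{\nabla m}_{L^{\infty}}$, and the term $\norm{\nabla m}_{L^{\infty}}$ is \emph{not} controlled by the $q_{0}$-level solution unless $q_{0}-2s>d/2+1$, which fails in general (e.g.\ already for $q_{0}$ close to $\max(2s,d/2+1)$). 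You acknowledge the endpoint difficulty but do not resolve it, and the estimates of Appendix~\ref{sec:Sobolev-boundedness} are commutators with \emph{multiplication} operators, not the transport commutators you would need here. Your second route is circular as stated: to bound the spray $S_{\varphi}(v)=A_{\varphi}^{-1}\{\cdots\}$ in $H^{q}$ along the trajectory you need an a priori $H^{q}$-bound on $v(t)$, which is precisely what you are trying to establish; boundedness at level $q_{0}$ does not bound the spray at level $q$.

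The actual Ebin--Marsden argument is of a different nature and uses no a priori estimates at all: writing $\Phi_{t}$ for the local flow of the spray on $T\DM{q}$, right-invariance gives $\Phi_{t}(v_{0}\circ\psi_{s})=\Phi_{t}(v_{0})\circ\psi_{s}$ for the flow $\psi_{s}$ of any smooth vector field $X$ on $M$; differentiating this identity in $s$ at $s=0$ and using the \emph{smooth dependence on initial data} of the ODE flow (which is exactly what Theorem~\ref{thm:smoothness-spray} buys) shows that if $v_{0}\in T\DM{q+1}$ then $Dv(t)\cdot X$ stays in $T\DM{q}$, i.e.\ $v(t)$ retains one extra derivative for as long as it exists at level $q$. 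Iterating gives $J_{q}=J_{q_{0}}$ for all $q\ge q_{0}$. If you do not want to reprove this, the honest thing to do --- and what the paper does --- is to cite \cite[Theorem~12.1]{EM1970} after verifying its hypotheses (smoothness of the spray on $T\DM{q}$ for all $q\ge q_{0}$, which Theorem~\ref{thm:smoothness-spray} provides, and its $\DM{q}$-equivariance, which holds by construction). Part (2) of your argument is fine and matches the paper.
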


\begin{proof}
  $(1)$ follows from point $(1)$ of Theorem~\ref{thm:smooth-flow-Hq}, and the invariance of the spray; this is known as the \emph{no-loss-no-gain lemma}~\cite[Theorem 12.1]{EM1970}.

  In the smooth category, the mappings
  \begin{equation*}
    (\varphi,v) \mapsto v \circ \varphi, \qquad \DiffM \times T\DiffM \to T\DiffM
  \end{equation*}
  and
  \begin{equation*}
    \varphi \mapsto \varphi^{-1}, \qquad \DiffM \to \DiffM
  \end{equation*}
  are smooth, see~\cite{IKT2013}, which proves $(2)$.
\end{proof}

It is clear, that one can never hope for metric completeness of a Sobolev type metric on the space of smooth diffeomorphisms.
However, the geodesic completeness results of the previous section still hold in the smooth category.

\begin{cor}\label{cor:globalexistence}
  Let $A = B^{2}$, where $B$ is of class $\mathcal{E}^{s}(M,TM)$ with $s>\frac{d}2+1$. Then, the geodesic equations on $\DiffM$ and the corresponding Euler-Arnold equation are globally well-posed.
\end{cor}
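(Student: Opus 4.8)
The plan is to deduce this statement from the global well-posedness result on the Hilbert manifold $\DM{s}$ proved above, combined with the no-loss-no-gain lemma, in exactly the way that Corollary~\ref{cor:smooth_flow} was obtained from Theorem~\ref{thm:smooth-flow-Hq}. Since $A = B^{2}$ is an elliptic, positive pseudo-differential operator of order $2s$ with $s > d/2+1$ (so in particular $s \ge 1/2$), Corollary~\ref{cor:smooth_flow} already provides, for every smooth initial velocity $v_{0} \in T\DiffM$ (equivalently every $u_{0} \in \Vect(M)$), a unique non-extendable smooth geodesic $v \in C^{\infty}(J,T\DiffM)$ on a maximal open interval $J \ni 0$, together with the corresponding smooth Euler--Arnold solution $u \in C^{\infty}(J,\Vect(M))$. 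It therefore only remains to show that $J = \RR$.

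To this end I would regard $v_{0}$ as an element of $T\DM{s}$. By Theorem~\ref{thm:metric-smoothness}(2) the metric $G$ extends to a smooth, strong, right-invariant Riemannian metric on $\DM{s}$, so by the global well-posedness result in the Sobolev category (the theorem preceding Corollary~\ref{cor:smooth_flow}, itself a consequence of \cite[Corollary~7.6]{BV2017}) the geodesic of $G$ on $\DM{s}$ with initial velocity $v_{0}$ is defined for all $t \in \RR$. Since $v_{0}$ is smooth, it lies in $T\DM{q}$ for every $q \ge s$, and the \emph{no-loss-no-gain lemma} \cite[Theorem~12.1]{EM1970} — which rests on the right-invariance of the spray and is a statement of conservation of regularity along the geodesic flow — shows that this geodesic remains of class $H^{q}$ on all of $\RR$, for every $q \ge s$. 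Hence it is a smooth curve in $\DiffM = \bigcap_{q} \DM{q}$; by uniqueness of the maximal smooth geodesic it coincides with $v$, so $J = \RR$. The Eulerian velocity $u(t) = v(t)\circ\varphi(t)^{-1}$ is then defined and smooth for all $t$, because composition and inversion are smooth on $\DiffM$, exactly as in the proof of Corollary~\ref{cor:smooth_flow}(2); this yields the global well-posedness of the Euler--Arnold equation as well.

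The only genuinely nontrivial point is this transfer of global existence from the single Hilbert level $\DM{s}$, where the metric is strong and the classical theory of \cite{BV2017} applies, up through the whole Sobolev scale and onto the Fréchet group $\DiffM$; this is precisely what the no-loss-no-gain lemma is for. Some care is warranted because the smoothness of the geodesic spray on $T\DM{q}$ was established here only for $q = s$ (via the strong-metric shortcut noted after Theorem~\ref{thm:smoothness-spray}) and for $q \ge 2s$ (Theorem~\ref{thm:smoothness-spray}), so the argument should be phrased so that the intermediate range $s < q < 2s$ is never used as a separate ODE level: one anchors everything at level $s$ and invokes conservation of regularity along the already-constructed flow, rather than re-solving the geodesic ODE on the Hilbert manifolds $\DM{q}$ for $q$ in that range.
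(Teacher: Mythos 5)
Your overall strategy matches the paper's: anchor global existence at the single level $q=s$, where the metric is strong and \cite[Corollary 7.6]{BV2017} applies, and then propagate smoothness of the solution up the Sobolev scale via no-loss-no-gain. You have also correctly identified the one delicate point, namely that Theorem~\ref{thm:smoothness-spray} gives smoothness of the spray only for $q=s$ and $q\ge 2s$, leaving the range $q\in(s,2s)$ uncovered. But your proposed way around it --- ``phrase the argument so that the intermediate range is never used as a separate ODE level'' and instead ``invoke conservation of regularity along the already-constructed flow'' --- is not a proof. Conservation of regularity along the flow \emph{is} the no-loss-no-gain lemma, and the hypotheses of \cite[Theorem 12.1]{EM1970} include smoothness of the spray at the higher regularity levels: the step from $H^{s}$ to $H^{s+1}$ already requires the spray (or its flow) to be smooth on $T\DM{s+1}$, and since $s>1$ one has $s+1\in(s,2s)$, i.e.\ the very first iteration lands squarely in the uncovered range. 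So the gap you flagged is not avoided by rephrasing; it must actually be closed.

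The paper closes it by citing \cite[Corollary 4.5]{Bru2017}: the smoothness together with the $\DM{r}$-equivariance of the spray on $T\DM{s}$ already imply smoothness of the spray on $T\DM{s+k}$ for all $k\in\NN$. With that in hand the standard no-loss-no-gain iteration applies at every integer level $s+k$, the globally defined geodesic on $\DM{s}$ with smooth initial data stays in $\bigcap_{k}\DM{s+k}=\DiffM$ for all time, and the rest of your argument (uniqueness of the maximal smooth geodesic, smoothness of composition and inversion on $\DiffM$ to handle the Euler--Arnold equation) goes through as you wrote it. In short: the architecture of your proof is right, but it is missing the one nontrivial ingredient --- a bootstrap of spray smoothness from level $s$ to the levels $s+k$ --- without which the appeal to no-loss-no-gain is circular.
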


\begin{proof}
  To prove this result we need a slightly improved version of the no-loss-no-gain result. The reason is that using our results from Theorem~\ref{thm:smooth-flow-Hq}, we only know the smoothness of the geodesic spray on $T\DM{q}$ for $q = s$ and for $q\geq 2s$, i.e., not for $q\in(s,2s)$. To conclude the global existence using~\cite[Theorem 12.1]{EM1970} in the smooth category one needs the smoothness of the spray for all $q \geq s$ (or at least for all $s+k$ with $k\in\NN$). However, it has been shown in~\cite[Corollary 4.5.]{Bru2017}, that the smoothness and $\DM{r}$-equivariance of the spray on $\DM{s}$ already imply the smoothness of the spray on $T\DM{\alpha}$ for all $\alpha = s+k$ with $k\in \NN$ and thus the result follows.
\end{proof}

\begin{rem}
  Note that the results of this Section apply in particular to the $H^{s}$-metric for $s > 1/2$ (and $s > 1 + d/2$ for global well-posedness, respectively).
\end{rem}

\appendix

\section{A Sobolev boundedness theorem}
\label{sec:Sobolev-boundedness}

The goal of this appendix is to prove the following theorem.

\begin{thm}\label{thm:boundedness-lemma}
  Let $P \in \OPS{r+n-1}$ with a Hermitian symbol compactly supported in $x$. Given $w \in \mathrm{C}_c^{\infty}(\RR^{d},\RR^{d})$ and $f_{1}, \dotsc , f_{n} \in \mathrm{C}_c^{\infty}(\RR^{d},\RR)$, we have:
  \begin{equation*}
    \norm{S_{n,P}(f_{1},\dotsc , f_{n})w}_{H^{q-r}} \lesssim \norm{f_{1}}_{H^{q}} \dotsb \norm{f_{n}}_{H^{q}} \norm{w}_{H^{q-1}},
  \end{equation*}
  for $q > 1 + d/2$ and $r \le q$, where:
  \begin{equation*}
    S_{n,P}(f_{1},f_{2}, \dotsc , f_{n}) : = [f_{1},[f_{2} \dotsb [ f_{n}, P]\dotsb]].
  \end{equation*}
\end{thm}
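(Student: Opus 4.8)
The plan is to replace the nested commutator by the Schwartz kernel of $S_{n,P}$, and then to read off the gain of $n$ derivatives from a Fourier expansion of the $f_i$ combined with a finite--difference expansion of the symbol of $P$. Since the symbol $p$ of $P=\Op(p)$ is compactly supported in $x$, the operator $P$ has Schwartz kernel $K(x,y)=\check p(x,x-y)$, the inverse Fourier transform of $p(x,\cdot)$ evaluated at $x-y$. A one--step induction on $n$ — using that $[f,Q]$ has kernel $L(x,y)(f(x)-f(y))$ whenever $Q$ has kernel $L$ — yields the representation
\begin{equation*}
  S_{n,P}(f_1,\dots,f_n)w(x)=\int_{\RR^d}K(x,y)\prod_{i=1}^n\bigl(f_i(x)-f_i(y)\bigr)\,w(y)\,dy,
\end{equation*}
so that the commutator bookkeeping is settled once and for all and no operator norms need be propagated through the recursion. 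Writing each factor as $f_i(x)-f_i(y)=\int_{\RR^d}\widehat{f_i}(\zeta_i)\bigl(e^{2\pi i x\cdot\zeta_i}-e^{2\pi i y\cdot\zeta_i}\bigr)\,d\zeta_i$, multiplying out the product, and using the identity $\int K(x,y)e^{2\pi i y\cdot\eta}w(y)\,dy=e^{2\pi i x\cdot\eta}\,\Op\bigl(p(\cdot,\cdot+\eta)\bigr)w(x)$, one is led, after collecting exponentials, to
\begin{equation*}
  S_{n,P}(f_1,\dots,f_n)w(x)=\int_{\RR^{nd}}\Bigl(\prod_{i=1}^n\widehat{f_i}(\zeta_i)\Bigr)\,e^{2\pi i x\cdot(\zeta_1+\dots+\zeta_n)}\,\bigl(\Delta_{\zeta_1}\!\cdots\Delta_{\zeta_n}G\bigr)(0)(x)\,d\zeta_1\cdots d\zeta_n,
\end{equation*}
where $G(\lambda):=\Op\bigl(p(\cdot,\cdot+\lambda)\bigr)w$ and $\Delta_\zeta$ is the first difference in the parameter $\lambda$.

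Writing each difference as $\Delta_\zeta H(\lambda)=-\int_0^1(\zeta\cdot\nabla_\lambda H)(\lambda+s\zeta)\,ds$, the $n$--fold difference at $0$ becomes an average over $s\in[0,1]^n$ of $\Op(q_{s,\zeta})w$ with
\begin{equation*}
  q_{s,\zeta}(x,\xi)=\bigl[(\zeta_1\cdot\nabla_\xi)\cdots(\zeta_n\cdot\nabla_\xi)\,p\bigr]\Bigl(x,\xi+\textstyle\sum_{j}s_j\zeta_j\Bigr).
\end{equation*}
Because $p\in\mathbf{S}^{r+n-1}$, the symbol $q_{s,\zeta}$ lies in $\mathbf{S}^{r-1}$, with seminorms bounded by $\prod_i|\zeta_i|$ times a fixed power of $\langle\zeta_1+\dots+\zeta_n\rangle$: this is precisely the gain of $n$ derivatives (order $r+n-1$ down to order $r-1$) that the theorem expresses. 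The task is thereby reduced to estimating this explicit multilinear expression in $H^{q-r}$, controlling the $\zeta_i$--integrals by $\|f_i\|_{H^q}$ and the order-$(r-1)$ symbol by $\|w\|_{H^{q-1}}$, via Sobolev boundedness of $\Op(q_{s,\zeta})$ (Lemma~\ref{lem:sobolev-embedding} and the $L^2$--boundedness theorem) and the multiplication and algebra properties of $H^{q-1}$ (Lemma~\ref{lem:pointwise-multiplication}).

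The main obstacle is that the crude estimate — bounding $q_{s,\zeta}$ by $\langle\xi\rangle^{r-1}$ times $\prod_i|\zeta_i|\,\langle\zeta_1+\dots+\zeta_n\rangle^{C}$ — is too lossy and only closes for large $q$; one must split the $\zeta$--integration into the region where all $|\zeta_i|\lesssim\langle\xi\rangle$ — there $\langle\xi+\sum_j s_j\zeta_j\rangle\sim\langle\xi\rangle$, the symbol is genuinely of order $r-1$, and $\int_{|\zeta|\lesssim R}|\widehat{f_i}(\zeta)|\,|\zeta|\,d\zeta\lesssim\|f_i\|_{H^q}$ with a bound uniform in $R$ precisely because $q>1+d/2$ — and the complementary regions, where some $\zeta_i$ dominates the total frequency, $w$ effectively sits at frequency $\lesssim|\zeta_i|$, and the extra derivatives are absorbed into $\|f_i\|_{H^q}$. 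Equivalently, one may first reduce to $x$--independent symbols: expanding $p$ in an $x$--Fourier series (legitimate since $p$ is $x$--compactly supported, with coefficients that are symbols of order $r+n-1$ whose seminorms decay faster than any power of the dual index) and using that multiplications commute, one writes $S_{n,P}(f_1,\dots,f_n)=\sum_k M_{e_k}S_{n,c_k(D)}(f_1,\dots,f_n)$ and is reduced to iterated--commutator estimates for Fourier multipliers of exactly the kind treated in~\cite{EK2014,BEK2015,Kol2017}. This frequency bookkeeping at the borderline regularity $q>1+d/2$ (with $r\le q$ keeping multiplication bounded into $H^{q-r}$) is the heart of the argument and is the generalized higher--order Kato--Ponce content of the statement. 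Finally, the inequality is proved first for $f_i,w\in C_c^\infty$ and extended to $f_i\in H^q$ and $w\in H^{q-1}$ by density.
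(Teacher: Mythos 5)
Your proposal is essentially correct and, in its second formulation, coincides with the paper's strategy: the paper exploits the compact $x$-support of the symbol to write $P=\int_{\RR^d} e^{2\pi i\langle\cdot,\lambda\rangle}\,\hat p(\lambda,D)\,d\lambda$, a rapidly decaying superposition of modulated Fourier multipliers, and then runs the iterated finite-difference computation of \cite{EK2014,BEK2015} on each $\hat p(\lambda,D)$; this is exactly your ``reduce to $x$-independent symbols'' route, and the resulting object $\hat p_n(\lambda,\xi_0,\dots,\xi_n)=\sum_{J}(-1)^{|J|}\hat p(\lambda,\xi_0+\sum_{j\in J}\xi_j)$ is the same $n$-fold difference that you obtain from the kernel representation before Fourier transforming in $x$. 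The one place where you diverge is the step you correctly identify as the heart of the matter: how to avoid the lossy bound $\prod_i|\zeta_i|\cdot\langle\xi\rangle^{r-1}\cdot(\text{powers of }\langle\zeta_j\rangle)$. You propose a high/low frequency splitting in the Kato--Ponce style, which is plausible but left as a sketch (and note that in your low-frequency region one only gets $\langle\xi+\sum_j s_j\zeta_j\rangle\lesssim\langle\xi\rangle$, not $\sim$, which suffices since $r\ge1$; also the seminorm growth is in $\max_j\langle\zeta_j\rangle$ rather than $\langle\sum_j\zeta_j\rangle$, as the $\zeta_j$ may cancel). The paper closes this step without any case analysis: after the iterated mean value theorem the $n$-fold difference is controlled by $\prod_j\langle\xi_j\rangle\cdot\sup_{K_n}\langle\xi\rangle^{r-1}$ over the convex hull $K_n$ of the $2^n$ shifted frequencies, and since $\xi\mapsto\langle\xi\rangle^{r-1}$ is convex for $r\ge1$ the supremum is attained at a vertex, giving $\sum_J\langle\xi_0+\sum_{j\in J}\xi_j\rangle^{r-1}$. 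Each summand then factors on the physical side as $\prod_{j\in J^c}\mathfrak F^{-1}|\langle\cdot\rangle\hat f_j|\cdot\Lambda^{r-1}\bigl[\prod_{j\in J}\mathfrak F^{-1}|\langle\cdot\rangle\hat f_j|\cdot\mathfrak F^{-1}|\hat w|\bigr]$, and the multiplicative algebra property of $H^{q-1}$ (using $q>1+d/2$ and $1\le r\le q$) finishes the proof. So the convexity observation is the ingredient that replaces your frequency bookkeeping; with either that or the reduction to the already-proved Fourier multiplier estimates, your argument is complete.
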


The proof we present here is inspired from~\cite{EK2014,BEK2015} which was given for a Fourier multiplier but requires a trick for a pseudo-differential operator which is used to prove the $L^{2}$ boundedness theorem for an operator in $\OPS{0}$ (see~\cite[Part II, Section 2.4]{RT2010}). Because $p(x,\xi)$ is compactly supported in $x$, we can use the Fourier transform of $p(x,\xi)$ with respect to $x:$
\begin{equation*}
  \hat{p}(\lambda,\xi) : = \int_{\Rd} e^{-2\pi i \langle x,\lambda \rangle} p(x, \xi) \, dx,
\end{equation*}
which allows to rewrite $P$ as
\begin{equation}\label{eq:compact-support-case}
  (Pw)(x) = \int_{\Rd} e^{2\pi i \langle x,\lambda \rangle} \left(\hat{p}(\lambda,D)w\right)(x) \, d \lambda,
\end{equation}
where $\hat{p}(\lambda,D)$ is a \emph{Fourier multiplier} with symbol $\hat{p}(\lambda,\xi)$. Before entering the details of the proof of Theorem~\ref{thm:boundedness-lemma}, we will establish the following lemma.

\begin{lem}\label{lem:multi-symbol-expression}
  Let $P \in \OPS{r}$ with a Hermitian symbol $p(x,\xi)$ compactly supported in $x$ and let $f_{1}, \dotsc , f_{n} \in \CScR$, then for each $n \ge 1$ we have
  \begin{equation}\label{eq:multi-symbol-expression}
    \left(S_{n,P}(f_{1},\dotsc , f_{n})w\right)(x) = \int_{\Rd} e^{2\pi i \langle x,\lambda \rangle} (P_{n}(\lambda)w)(x) \, d \lambda,
  \end{equation}
  where
  \begin{equation*}
    \mathfrak{F}(P_{n}(\lambda)w)(\xi) = \int_{\xi_{0} + \dotsb +\xi_{n} = \xi} \hat{f}_{1}(\xi_{1}) \dotsb \hat{f}_{n}(\xi_{n}) \cdot \hat{p}_{n}(\lambda,\xi_{0},\xi_{1},\dotsc \xi_{n})\hat{w}(\xi_{0}) \, d\mu ,
  \end{equation*}
  $d\mu$ is the Lebesgue measure on the subspace $\xi_{0} + \dotsb + \xi_{n} = \xi$ of $(\RR^{d})^{n+1}$ and
  \begin{equation*}
    \hat{p}_{n}(\lambda,\xi_{0},\xi_{1}, \dotsc ,\xi_{n}) : = \sum_{J \subseteq \set{1, \dotsc , n}}(-1)^{\abs{J}} \hat{p}\left( \lambda,\xi_{0} + \sum_{j\in J}\xi_{j} \right)
  \end{equation*}
\end{lem}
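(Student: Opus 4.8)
The statement to prove is Lemma~\ref{lem:multi-symbol-expression}, which expresses the iterated commutator $S_{n,P}(f_1,\dots,f_n)$ applied to $w$ in terms of a symbol $\hat p_n$ built out of the $x$-Fourier transform of the symbol $p$. The natural strategy is induction on $n$, running parallel to the very definition of $S_{n,P}$ as a nested commutator: $S_{n,P}(f_1,\dots,f_n) = [f_1, S_{n-1,P}(f_2,\dots,f_n)]$ (using here the total symmetry of $S_{n,P}$ noted in the remark after the definition of $S_{n,P}$). The base case $n=1$ is a direct computation: for the Fourier multiplier $\hat p(\lambda,D)$ one has $[f_1,\hat p(\lambda,D)]w = f_1\,\hat p(\lambda,D)w - \hat p(\lambda,D)(f_1 w)$, and taking Fourier transforms with the convolution rule $\widehat{f_1 v}(\xi) = \int \hat f_1(\xi_1)\hat v(\xi-\xi_1)\,d\xi_1$ produces precisely $\hat p_1(\lambda,\xi_0,\xi_1) = \hat p(\lambda,\xi_0) - \hat p(\lambda,\xi_0+\xi_1)$, matching the claimed formula with $J$ ranging over subsets of $\{1\}$. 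Then I integrate against $e^{2\pi i\langle x,\lambda\rangle}\,d\lambda$ using~\eqref{eq:compact-support-case} to land back at $S_{1,P}w$.

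For the inductive step, assume the formula holds for $n$ and compute $S_{n+1,P}(f_1,\dots,f_{n+1})w = [f_1, S_{n,P}(f_2,\dots,f_{n+1})]w$. Write both terms $f_1 \cdot S_{n,P}(\dots)w$ and $S_{n,P}(\dots)(f_1 w)$ using the inductive integral representation~\eqref{eq:multi-symbol-expression}. Taking the $\xi$-Fourier transform, multiplication by $f_1$ becomes convolution in the frequency variable, so a new integration variable $\xi_1$ is introduced and the constraint $\xi_0 + \dots + \xi_n = \xi$ is shifted; in the second term the seed function $w$ is replaced by $f_1 w$, whose Fourier transform again introduces $\xi_1$ but now \emph{inside} the slot where $\xi_0$ appears in $\hat p_n$. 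Carefully matching variables (relabelling so that the fresh variable is called $\xi_1$ and the old ones are shifted up), the difference of the two contributions is
\begin{equation*}
  \hat p_n\!\left(\lambda,\xi_0,\xi_2,\dots,\xi_{n+1}\right) - \hat p_n\!\left(\lambda,\xi_0+\xi_1,\xi_2,\dots,\xi_{n+1}\right),
\end{equation*}
and expanding $\hat p_n$ via its defining sum over $J\subseteq\{2,\dots,n+1\}$ one sees this equals $\sum_{J\subseteq\{1,\dots,n+1\}}(-1)^{|J|}\hat p(\lambda,\xi_0+\sum_{j\in J}\xi_j)$, i.e.\ exactly $\hat p_{n+1}$. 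The combinatorial identity here is the elementary fact that $\hat p_{n+1} = \hat p_n(\cdot,\xi_0,\dots) - \hat p_n(\cdot,\xi_0+\xi_{n+1},\dots)$, which is nothing but the recursion that a finite difference in one more variable adds a layer of alternating signs — the same structure as the $n=1$ case.

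**Main obstacle.** The genuinely delicate point is bookkeeping rather than analysis: one must handle the $\lambda$-integral from the Fourier-multiplier representation~\eqref{eq:compact-support-case} simultaneously with the convolution structure in $\xi$, and keep the measure $d\mu$ on the hyperplane $\{\xi_0+\dots+\xi_n=\xi\}$ consistent when a new variable is adjoined (the hyperplane jumps from $(\RR^d)^{n+1}$ to $(\RR^d)^{n+2}$, so one should reparametrise, e.g.\ integrate out the constraint, before comparing). Since $p$ is compactly supported in $x$ and all $f_i, w$ lie in $\CScR$, all the integrals involved converge absolutely and Fubini applies freely, so there are no convergence subtleties — the entire content is that the two ways of introducing the extra frequency variable (multiplying the output by $f_1$ versus feeding $f_1 w$ into the lower-order operator) differ by a shift of the argument of $\hat p$, which is then absorbed into the alternating sum defining $\hat p_{n+1}$. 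I would present the $n=1$ computation in full and then indicate the inductive step, emphasising that symmetry of $S_{n,P}$ lets us peel off $f_1$ from the front.
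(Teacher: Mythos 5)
Your proposal is correct and follows essentially the same route as the paper: induction on $n$, with the base case computed via the representation~\eqref{eq:compact-support-case} and the convolution rule, and the inductive step reduced to the finite-difference identity $\hat{p}_{n+1} = \hat{p}_{n}(\lambda,\xi_{0},\dotsc) - \hat{p}_{n}(\lambda,\xi_{0}+\xi_{k},\dotsc)$ expanded over subsets $J$. The only cosmetic difference is that you peel off $f_{1}$ from the outermost commutator (which is just the definition of $S_{n,P}$, so no appeal to symmetry is actually needed there), whereas the paper peels off $f_{n+1}$ using the total symmetry of $S_{n,P}$; by the symmetry of $\hat{p}_{n}$ in $\xi_{1},\dotsc,\xi_{n}$ the two are interchangeable.
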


\begin{proof}
  The proof is achieved by induction on $n$. For $n = 1$, using~\eqref{eq:compact-support-case}, we get
  \begin{equation*}
    [f_{1},P]w(x) = \int_{\Rd} e^{2\pi i \langle x,\lambda \rangle} \left(P_{1}(\lambda)w\right)(x) \, d \lambda,
  \end{equation*}
  where
  \begin{equation*}
    P_{1}(\lambda) : = [f_{1},\hat{p}(\lambda,D)].
  \end{equation*}
  Therefore, \eqref{eq:multi-symbol-expression} is true for $n = 1$ with
  \begin{equation*}
    \hat{p}_{1}(\lambda,\xi_{0},\xi_{1}) = \hat{p}(\lambda,\xi_{0}) - \hat{p}(\lambda,\xi_{0}+\xi_{1}).
  \end{equation*}
  Suppose now that~\eqref{eq:multi-symbol-expression} is true for some $n \ge 1$. Using the fact that
  \begin{equation*}
    S_{n+1,P}(f_{1},\dotsc , f_{n+1}) = [f_{n+1}, S_{n,P}(f_{1},\dotsc , f_{n})],
  \end{equation*}
  we get
  \begin{equation*}
    (S_{n+1,P}(f_{1},\dotsc , f_{n+1})w)(x) = \int_{\Rd} e^{2\pi i \langle x,\lambda \rangle} \left([f_{n+1},P_{n}(\lambda)]w\right)(x) \, d \lambda .
  \end{equation*}
  But, by the \emph{coarea formula} and the recurrence hypothesis, we have
  \begin{multline*}
    \mathfrak{F}([f_{n+1},P_{n}(\lambda)]w)(\xi) = \int_{\xi_{0} + \dotsb +\xi_{n+1} = \xi} \hat{f}_{1}(\xi_{1}) \dotsb \hat{f}_{n+1}(\xi_{n+1}) \cdot
    \\
    \Big[ \hat{p}_{n}(\lambda,\xi_{0},\dotsc,\xi_{n}) - \hat{p}_{n}(\lambda, \xi_{0} + \xi_{n+1}, \xi_{1}, \dotsc , \xi_{n}) \Big]\hat{w}(\xi_{0}) \, d\mu .
  \end{multline*}
  Thus, it remains to show that
  \begin{equation*}
    \hat{p}_{n}(\lambda,\xi_{0},\dotsc,\xi_{n}) - \hat{p}_{n}(\lambda,\xi_{0}+\xi_{n+1}, \xi_{1},\dots,\xi_{n}) = \hat{p}_{n+1}(\lambda,\xi_{0},\dotsc,\xi_{n+1}).
  \end{equation*}
  To do this, take the formula for $\hat{p}_{n+1}$ and split the sum as follows
  \begin{multline*}
    \hat{p}_{n+1}(\lambda, \xi_{0}, \dotsc , \xi_{n+1}) = \sum_{J \subseteq \set{1, \dotsc , n}} (-1)^{\abs{J}} \hat{p}\left(\lambda, \xi_{0} + \sum_{j \in J} \xi_{j}\right)
    \\
    - \sum_{J \subseteq \set{1, \dotsc , n}} (-1)^{\abs{J}} \hat{p}\left(\lambda, \xi_{0} + \xi_{n+1} + \sum_{j \in J} \xi_{j}\right),
  \end{multline*}
  which is equal to
  \begin{equation*}
    \hat{p}_{n}(\lambda,\xi_{0},\dotsc,\xi_{n}) - \hat{p}_{n}(\lambda,\xi_{0}+\xi_{n+1}, \xi_{1},\dotsc,\xi_{n}),
  \end{equation*}
  and achieves the proof.
\end{proof}

Next, we will provide an estimate on $\hat{p}_{n}$.

\begin{lem}\label{lem:pn-estimate}
  Suppose that $p\in \mathbf{S}^{r+n-1}(\Rd \times \Rd, \mathrm{M}_{d}(\CC))$ is compactly supported in $x$ and that $r\geq 1$, then the following estimate holds:
  \begin{multline}\label{eq:pn-estimate}
    \abs{\hat{p}_{n}(\lambda,\xi_{0},\xi_{1},\ldots \xi_{n})}
    \\
    \le C_{p,N} (1 + \abs{\lambda})^{-N} \left( \prod_{j = 1}^{n} \langle \xi_{j} \rangle \right)
    \sum_{J \subseteq \set{1, \dotsc , n}} \left\langle \xi_{0} + \sum_{j \in J} \xi_{j} \right\rangle^{r-1},
  \end{multline}
  for all $N \ge 0$, where $C_{p,N}>0$ depends only on $p$ and $N$.
\end{lem}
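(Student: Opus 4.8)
The plan is to recognize $\hat p_n$ as an $n$-th order mixed finite difference of $\hat p$ in the frequency variable, to convert this difference into an integral of an $n$-th order $\xi$-derivative, and then to use that applying $n$ $\xi$-derivatives lowers the symbol order by $n$ while the compact support of $p$ in $x$ produces the decay in $\lambda$. For the first step, fix $\lambda$, set $g(\eta):=\hat p(\lambda,\xi_0+\eta)$, and write $\Delta_v h(\eta):=h(\eta)-h(\eta+v)$ for the forward difference; then the defining formula for $\hat p_n$ is exactly $\hat p_n(\lambda,\xi_0,\dotsc,\xi_n)=(\Delta_{\xi_1}\Delta_{\xi_2}\dotsb\Delta_{\xi_n}g)(0)$. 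Iterating the elementary identity $(\Delta_v h)(0)=-\int_0^1(v\cdot\nabla)h(tv)\,dt$, and using that $\xi_j\cdot\nabla$ commutes with $\Delta_{\xi_k}$ for $k\ne j$, I obtain
\[
  \hat p_n(\lambda,\xi_0,\dotsc,\xi_n)=(-1)^n\!\int_{[0,1]^n}(\xi_1\cdot\nabla_\xi)\dotsb(\xi_n\cdot\nabla_\xi)\,\hat p\Bigl(\lambda,\ \xi_0+\textstyle\sum_{j=1}^n t_j\xi_j\Bigr)\,dt\,.
\]
Expanding the product of first-order operators, the integrand is a finite linear combination of terms $\xi_1^{k_1}\dotsb\xi_n^{k_n}\,(\partial_\xi^\alpha\hat p)(\lambda,\,\cdot\,)$, where $\xi_j^{k_j}$ is a component of $\xi_j$ and $\abs\alpha=n$; hence it is bounded by $\bigl(\prod_{j=1}^n\langle\xi_j\rangle\bigr)\max_{\abs\alpha=n}\bigl\lvert(\partial_\xi^\alpha\hat p)(\lambda,\,\cdot\,)\bigr\rvert$.

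Next I would estimate $\partial_\xi^\alpha\hat p$ for $\abs\alpha=n$. Then $\partial_\xi^\alpha p\in\mathbf{S}^{r-1}(\Rd\times\Rd,\mathrm M_d(\CC))$ and, like $p$, it is supported in a fixed compact set in the $x$-variable. Integrating by parts in $x$ gives $(2\pi i\lambda)^\beta(\partial_\xi^\alpha\hat p)(\lambda,\xi)=\widehat{\partial_x^\beta\partial_\xi^\alpha p}(\lambda,\xi)$, and the symbol bound together with the compact $x$-support yields $\bigl\lvert\widehat{\partial_x^\beta\partial_\xi^\alpha p}(\lambda,\xi)\bigr\rvert\lesssim_{\alpha,\beta}\langle\xi\rangle^{r-1}$ uniformly in $\lambda$. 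Summing over $\abs\beta\le N$ then gives $\bigl\lvert(\partial_\xi^\alpha\hat p)(\lambda,\xi)\bigr\rvert\lesssim_N(1+\abs\lambda)^{-N}\langle\xi\rangle^{r-1}$ for every $N\ge0$; here the exponent $r-1\ge0$ in \eqref{eq:pn-estimate} appears, and the hypothesis $r\ge1$ is used for the first time.

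It remains to control the argument $\xi_0+\sum_j t_j\xi_j$ uniformly over $t\in[0,1]^n$. The point $\sum_j t_j\xi_j$ is a convex combination of the $2^n$ vertices $\bigl\{\sum_{j\in J}\xi_j\bigr\}_{J\subseteq\{1,\dotsc,n\}}$ of the parallelepiped they span, with weights $\mu_J(t)=\prod_{j\in J}t_j\prod_{j\notin J}(1-t_j)$, so $\xi_0+\sum_j t_j\xi_j=\sum_J\mu_J(t)\bigl(\xi_0+\sum_{j\in J}\xi_j\bigr)$ and therefore $\bigl\lvert\xi_0+\sum_j t_j\xi_j\bigr\rvert\le\max_J\bigl\lvert\xi_0+\sum_{j\in J}\xi_j\bigr\rvert$. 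Since $\langle\,\cdot\,\rangle^{r-1}$ is nondecreasing in the norm (again using $r\ge1$), this gives $\bigl\langle\xi_0+\sum_j t_j\xi_j\bigr\rangle^{r-1}\le\sum_{J\subseteq\{1,\dotsc,n\}}\bigl\langle\xi_0+\sum_{j\in J}\xi_j\bigr\rangle^{r-1}$ for all such $t$. Plugging this bound and the estimate of the previous paragraph into the integral representation, and using that $[0,1]^n$ has unit volume, yields precisely \eqref{eq:pn-estimate}.

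The combinatorial identity, the telescoping that produces the integral representation, and the rapid decay of $\hat p$ in $\lambda$ coming from the compact support in $x$ are all routine. The one step requiring a small genuine observation is the last one: after integration over $[0,1]^n$ the argument sweeps out the entire parallelepiped, so the single factor $\langle\,\cdot\,\rangle^{r-1}$ cannot be dominated by any one of the $\langle\xi_0+\sum_{j\in J}\xi_j\rangle^{r-1}$ but only by their sum — which is exactly the shape of the right-hand side of \eqref{eq:pn-estimate}, and is also the reason the hypothesis $r\ge1$ is needed.
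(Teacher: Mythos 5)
Your proof is correct and follows essentially the same route as the paper: both convert the iterated finite difference $\hat p_n$ into $n$-th order $\xi$-derivatives of $\hat p$ (you via the integral form of the mean value theorem over $[0,1]^n$, the paper via nested applications of the pointwise mean value theorem over the convex hulls $K_k$), obtain the $(1+\abs{\lambda})^{-N}$ decay by integrating by parts in $x$ using the compact support, and bound $\langle\cdot\rangle^{r-1}$ over the resulting parallelepiped by the sum over its vertices using $r\ge 1$. The only cosmetic slip is your remark that $r\ge 1$ is "used for the first time" in the symbol estimate $\abs{\partial_\xi^\alpha \hat p}\lesssim\langle\xi\rangle^{r-1}$ — that bound holds for any $r$; the hypothesis is genuinely needed only in the final convexity/monotonicity step, which you also invoke correctly.
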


\begin{proof}
  Fix $\xi_{0},\dots,\xi_{n} \in \RR^{d}$. Let $K_{0} = \{ \xi_{0}\}$ and we define for $k = 1,\dots,n$ the set $K_{k}$ to be the convex hull of the sets $K_{k-1}$ and $K_{k-1} + \xi_{n+1-k}$. Then $K_{k}$ is the convex hull of the points $\xi_{0} + \sum_{j \in J} \xi_j$, where $J$ is any subset of $\{n+1-k,\dots,n\}$. Let $F_{k}$ be the sequence of mappings defined inductively by
  \begin{equation*}
    F_{0}(\lambda,\xi) = \hat{p}(\lambda,\xi), \quad F_{k}(\lambda,\xi) = F_{k-1}(\lambda,\xi) - F_{k-1}(\lambda,\xi + \xi_{k}),
  \end{equation*}
  for $k = 1,\dotsc,n$. In particular, we have
  \begin{equation*}
    F_{n}(\lambda,\xi_{0}) = \hat{p}_{n}(\lambda,\xi_{0},\xi_{1},\ldots \xi_{n}).
  \end{equation*}
  Then we can apply the mean value theorem to the recurrence relation to obtain
  \begin{equation*}
    \abs{F_{k}(\lambda,\xi)} = \abs{F_{k-1}(\lambda,\xi) - F_{k-1}(\lambda,\xi+\xi_{k})} \le \abs{\xi_{k}} \cdot \sup_{\eta \in K_{n-k+1}}\abs{\nabla^{\xi}F_{k-1}(\lambda,\eta)},
  \end{equation*}
  for all $\xi \in K_{n-k}$ and $ 1 \le k \le n$ and where $\nabla^{\xi}$ is the differential relative to the second variable $\xi$. Since the recurrence relation for $F_{k}$ is linear, it remains valid for all derivatives of $F_{k}$, i.e.,
  \begin{equation*}
    \sup_{\xi \in K_{n-k}}\abs{\left(\nabla^{\xi}\right)^{j}F_{k}(\lambda,\xi)} \le \abs{\xi_{k}} \cdot \sup_{\eta \in K_{n-k+1}}\abs{\left(\nabla^{\xi}\right)^{j+1}F_{k-1}(\lambda,\eta)},
  \end{equation*}
  for all $j\in \NN$. Starting with $k = n$ and applying the estimate iteratively we obtain
  \begin{multline*}
    \abs{F_{n}(\lambda,\xi_{0})}
    \le \abs{\xi_{n}} \cdot \sup_{\xi \in K_{1}}\abs{\nabla^{\xi}F_{n-1}(\lambda,\xi)}
    \le \dotsb
    \\
    \dotsb \le \left( \prod_{j = 1}^{n} \abs{\xi_{j}} \right) \cdot \sup_{\xi \in K_{n}} \abs{\left(\nabla^{\xi}\right)^{n} \hat{p}(\lambda,\xi)} \, .
  \end{multline*}
  Now, given $\alpha, \beta \in \NN^{d}$, we have
  \begin{equation*}
    (2\pi i \lambda)^{\alpha}\partial_{\xi}^{\beta} \hat{p}(\lambda,\xi) = \int_{\Rd} e^{-2\pi i \langle x,\lambda \rangle} \partial_{x}^{\alpha}\partial_{\xi}^{\beta} p(x,\xi) \, dx \, ,
  \end{equation*}
  from which we deduce that for every $N \ge 0$, there exists a constant $C_{p,N} > 0$ such that
  \begin{equation*}
    \abs{\left(\nabla^{\xi}\right)^{n} \hat{p}(\lambda,\xi)} \le C_{p,N} (1 + \abs{\lambda})^{-N} \langle \xi \rangle^{r-1}
  \end{equation*}
  and it remains to estimate $\langle \xi \rangle^{r-1}$ on the set $K_{n}$. For $r \geq 1$ the function $\xi \mapsto \langle \xi \rangle^{r-1}$ is convex and so it attains its maximum at one of the points $\xi_{0} + \sum_{j \in J} \xi_{j}$. Hence
  \begin{equation*}
    \sup_{\xi \in K_{n}} \left\langle \xi \right\rangle^{r-1} \leq
    \sum_{J \subseteq \set{1, \dotsc , n}} \left\langle \xi_{0} + \sum_{j \in J} \xi_{j} \right\rangle^{r-1}\,,
  \end{equation*}
  which achieves the proof, because $\abs{\xi_{j}} \le \langle \xi_{j} \rangle$.
\end{proof}

\begin{proof}[Proof of Theorem~\ref{thm:boundedness-lemma}:]
  By Lemma~\ref{lem:multi-symbol-expression}, we get
  \begin{equation*}
    \mathfrak{F}\left(S_{n,P}(f_{1},\dotsc , f_{n})w\right)(\xi) = \int_{\Rd} e^{2\pi i \langle x,\lambda \rangle}
    \mathfrak{F}\left(P_{n}(\lambda)w\right)(\xi) \, d\lambda \, ,
  \end{equation*}
  and thus
  \begin{multline*}
    \abs{\mathfrak{F}\left(S_{n,P}(f_{1},\dotsc , f_{n})w\right)(\xi)}
    \\
    \le \int_{\Rd} \int_{\xi_{0} + \dotsb +\xi_{n} = \xi} \abs{\hat{f}_{1}(\xi_{1}) \dotsb \hat{f}_{n}(\xi_{n}) \cdot \hat{p}_{n}(\lambda,\xi_{0},\xi_{1},\dotsc \xi_{n})\hat{w}(\xi_{0})} \, d\mu \, d\lambda \, .
  \end{multline*}
  where $d\mu$ is the Lebesgue measure on the subspace $\xi_{0}+\xi_{1}+\dotsc +\xi_{n} = \xi$ of $(\RR^{d} )^{n+1}$. Now, by Lemma~\ref{lem:pn-estimate}, we get
  \begin{multline*}
    \abs{\mathfrak{F}\left(S_{n,P}(f_{1},\dotsc , f_{n})w\right)(\xi)} \le C_{p,N} \left( \int_{\Rd} (1 + \abs{\lambda})^{-N} \, d\lambda \right) \sum_{J \subseteq \set{1, \dotsc , n}}
    \\
    \int_{\xi_{0} + \dotsb +\xi_{n} = \xi} \left\langle \xi_{0} + \sum_{j \in J} \xi_{j} \right\rangle^{r-1} \left( \prod_{j = 1}^{n} \langle \xi_{j} \rangle \right) \abs{\hat{f}_{1}(\xi_{1}) \dotsb \hat{f}_{n}(\xi_{n})} \abs{\hat{w}(\xi_{0})} \, d\mu \, .
  \end{multline*}
  But
  \begin{multline*}
    \int_{\xi_{0} + \dotsb +\xi_{n} = \xi} \left\langle \xi_{0} + \sum_{j \in J} \xi_{j} \right\rangle^{r-1} \left( \prod_{j = 1}^{n} \langle \xi_{j} \rangle \right) \abs{\hat{f}_{1}(\xi_{1}) \dotsb \hat{f}_{n}(\xi_{n})} \abs{\hat{w}(\xi_{0})} \, d\mu
    \\
    = \mathfrak{F} \left( \prod_{j \in J^{c}} \mathfrak{F}^{-1}\left(\abs{\langle \xi_{j} \rangle \hat{f}_{j}}\right) \Lambda^{r-1} \left[ \prod_{j \in J} \mathfrak{F}^{-1}\left(\abs{\langle \xi_{j} \rangle \hat{f}_{j}}\right) \mathfrak{F}^{-1}\left(\abs{\hat{w}}\right) \right] \right) (\xi) \, ,
  \end{multline*}
  where $\Lambda^{s}$ is the Fourier multiplier with symbol $\langle \xi \rangle^{s}$. We have thus, using the Plancherel identity and taking $N > d$
  \begin{align*}
     & \norm{S_{n,P}(f_{1},\dotsc , f_{n})w}_{H^{q-r}} = \norm{\langle \xi \rangle^{q-r} \mathfrak{F}(S_{n,P}(f_{1},\dotsc , f_{n})w)}_{L^{2}}
    \\
     & \quad \lesssim \sum_{J \subseteq \set{1, \dotsc , n}} \norm{ \prod_{j \in J^{c}} \mathfrak{F}^{-1}\left(\abs{\langle \xi_{j} \rangle \hat{f}_{j}}\right)\Lambda^{r-1} \left[ \prod_{j \in J} \mathfrak{F}^{-1}\left(\abs{\langle \xi_{j} \rangle \hat{f}_{j}}\right) \mathfrak{F}^{-1}\left(\abs{\hat{w}}\right) \right] }_{H^{q-r}}
    \\
     & \quad\lesssim \sum_{J \subseteq \set{1, \dotsc , n}} \norm{ \prod_{j \in J^{c}} \mathfrak{F}^{-1}\left(\abs{\langle \xi_{j} \rangle \hat{f}_{j}}\right)}_{H^{q-1}} \norm{\Lambda^{r-1} \left[ \prod_{j \in J} \mathfrak{F}^{-1}\left(\abs{\langle \xi_{j} \rangle \hat{f}_{j}}\right) \mathfrak{F}^{-1}\left(\abs{\hat{w}}\right) \right] }_{H^{q-r}}
    \\
     & \quad \lesssim \norm{f_{1}}_{H^{q}} \dotsb \norm{f_{n}}_{H^{q}} \norm{w}_{H^{q-1}}.
  \end{align*}
\end{proof}


\end{document}